\documentclass{amsart}

\usepackage{amsmath, amsfonts, amssymb, amsthm, amscd, graphicx, float, epstopdf, bm, tabu
}

\allowdisplaybreaks[4]

\usepackage[pdfpagemode={UseOutlines},bookmarks=true,bookmarksopen=true, bookmarksopenlevel=0,bookmarksnumbered=true,hypertexnames=false, colorlinks,linkcolor={blue},citecolor={blue},urlcolor={blue}, pdfstartview={FitV},unicode,breaklinks=true,backref=page]{hyperref}

\newtheorem{thm}{Theorem}[section]
\newtheorem{remark}[thm]{Remark}
\newtheorem{notation}[thm]{Notation}
\newtheorem{conjecture}[thm]{Conjecture}
\newtheorem{defn}[thm]{Definition}
\newtheorem{lem}[thm]{Lemma}
\newtheorem{prop}[thm]{Proposition}
\newtheorem{cor}[thm]{Corollary}
\newtheorem{example}[thm]{Example}
\begin{document}

\title[Volume of the moduli space of bounded $\mathbb{RP}^2$ structures]{Volume of the moduli space of unmarked bounded positive convex $\mathbb{RP}^2$ structures}


\author{Zhe Sun}
\address{Department of mathematics, University of Luxembourg}
\email{zhe.sun@uni.lu}
\thanks{The author was partially supported by the FNR AFR bilateral grant COALAS 11802479-2. }

\keywords{Moduli space, unmarked convex projective structures, boundedness, polynomial.}

\subjclass[2010]{Primary 57M50, 58D27, 32G15}

\date{}

\begin{abstract}
For the moduli space of unmarked convex $\mathbb{RP}^2$ structures on the surface $S_{g,m}$ with negative Euler characteristic, we investigate the subsets of the moduli space defined by the notions like boundedness of projective invariants, area, Gromov hyperbolicity constant, quasisymmetricity constant etc. These subsets are comparable to each other. We show that the Goldman symplectic volume of the subset with certain projective invariants bounded above by $t$ and fixed boundary simple root lengths $\mathbf{L}$ is bounded above by a positive polynomial of $(t,\mathbf{L})$ and thus the volume of all the other subsets are finite. We show that the analog of Mumford's compactness theorem holds for the area bounded subset. 
\end{abstract}

\maketitle


\section{Introduction}
In \cite{Mir07a}, Mirzakhani showed that the volume of the moduli space $\mathcal{M}_{g,m}(\mathbf{L})$ of Riemann surfaces with fixed boundary lengths $\mathbf{L}$ with respect to the Weil--Petersson symplectic form is a polynomial of $\mathbf{L}$. She obtained this result by showing a beautiful recursive formula where one side consists of the volume of $\mathcal{M}_{g,m}(\mathbf{L})$, while the other side consists of the volumes of the moduli spaces of Riemann surfaces that cutting out a pair of pants from $S_{g,m}$ (see \cite{Wri19} for a survey). The higher Teichm\"uller theory studies the representations of the fundamental group $\pi_1(S_{g,m})$ with more flexibility where the isometry group $\operatorname{PSL}(2,\mathbb{R})$ of the holonomy representation of the hyperbolic surface is replaced by a semisimple Lie group (see \cite{W19} for a survey). We are looking for an analog of Mirzakhani's result for the special connected component of the {\em $\rm{PGL}(n,\mathbb{R})$-representation variety} $\rm{Hom}(\pi_1(S_{g,m}),\rm{PGL}(n,\mathbb{R}))/\rm{PGL}(n,\mathbb{R})$ modulo the mapping class group. The existence of such geometric quantity was predicated by Labourie and McShane in \cite[page 284]{LM09}, and they indicate that the volume is not the right quantity to compute since it is infinite for $n\geq 3$. In this paper, we work on the existence of such geometric quantity for $n=3$ and we propose several finite quantities which are comparable in sense of coarse geometry.

A {\em convex $\mathbb{RP}^2$ surface} is a quotient $\Omega/\Gamma$ where $\Omega\subset \mathbb{RP}^2$ is convex and $\Gamma\subset \operatorname{PGL}(3,\mathbb{R})$ is discrete and acting properly on $\Omega$. It was initially studied by Kuiper \cite{Ku53,Ku54}, Benz\'ecri \cite{B60}, Kac--Vinberg \cite{KV67} and many others. On the other hand, a special connected component $\rm{Hit}_n(S_{g,0})$ of the $\rm{PGL}(n,\mathbb{R})$-representation variety was found by Hitchin in \cite{Hit92} through a special section of the Hitchin fibration. In \cite{CG93,G90}, Goldman--Choi proved that the moduli space of convex $\mathbb{RP}^2$ structures on $S_{g,0}$ is exactly $\rm{Hit}_3(S_{g,0})$. In \cite{FG06}, Fock and Goncharov introduced the notion of positivity to study a special part $\rm{Pos}_n(S_{g,m})$ of the representation variety $\rm{Hom}(\pi_1(S_{g,m}),\rm{PGL}(n,\mathbb{R}))/\rm{PGL}(n,\mathbb{R})$. By \cite[Theorem 1.15]{FG06}, $\rm{Pos}_n(S_{g,0})=\rm{Hit}_n(S_{g,0})$. For $n=3$, positivity can be understood as $\partial \Omega$ partly strictly convexity (Definition \ref{defn:pos3}). Let $\rm{Pos}_3(S_{g,m})(\mathbf{L})$ be the positive representation variety with fixed boundary simple root lengths $\mathbf{L}$. For $m>0$, in \cite{Mar10}, Marquis proved that the moduli space of cusped strictly convex $\mathbb{RP}^2$ structures on $S_{g,m}$ is exactly $\rm{Pos}_3(S_{g,m})(\mathbf{0})$. For $\mathbf{L}\in \mathbb{R}_{>0}^{2m}$, by \cite[Section 9]{LM09}, we can double the representation by doubling the surface in a canonical way, thus we identify $\rm{Pos}_3(S_{g,m})(\mathbf{L})$ with the moduli space of the resulting doubled convex $\mathbb{RP}^2$ structures. (For the other cases, the convex $\mathbb{RP}^2$ structure for the positive representation is investigated in \cite{Mar12}.) Hence we call $\mathcal{H}(S_{g,m})(\mathbf{L}):=\rm{Pos}_3(S_{g,m})(\mathbf{L})/Mod(S_{g,m})$ the {\em moduli space of unmarked positive convex $\mathbb{RP}^2$ structures on $S_{g,m}$} with fixed boundary simple root lengths $\mathbf{L}$.

The {\em (Atiyah--Bott--)Goldman symplectic form} \cite{AB83,G84} is a nature mapping class group invariant symplectic form on $\rm{Pos}_3(S_{g,m})(\mathbf{L})$ which generalizes the Weil--Petersson symplectic form. As pointed out by Labourie and McShane \cite{LM09}, the Goldman symplectic volume of $\mathcal{H}(S_{g,m})(\mathbf{L})$ is infinite. To get a finite number, we suggest to integrate over a subset of $\mathcal{H}(S_{g,m})(\mathbf{L})$ or integrate another function over $\mathcal{H}(S_{g,m})(\mathbf{L})$. We are mainly interested in the following two candidates:
\begin{enumerate}
\item $\mathcal{H}^t(S_{g,m})(\mathbf{L})$ which is a subset of $\mathcal{H}(S_{g,m})(\mathbf{L})$ with extra projective invariants (Definition \ref{definition:bpos}) comparing to the $3$-Fuchsian representations bounded above by $t$, and
\item $\mathcal{AH}^t(S_{g,m})(\mathbf{L})$ which is the subset with the canonical area (Definition \ref{definition:carea} which generalizes the hyperbolic area) bounded above by $t$. We suggest that $\mathcal{AH}^t(S_{g,m})(\mathbf{L})$ is the most natural subset to consider since for each element in the moduli space $\mathcal{M}_{g,m}(\mathbf{L})$, the hyperbolic area is a fixed constant.
\end{enumerate}
Inspired by the work of Benoist \cite{Ben03} and Colbois--Vernicos--Verovic \cite{CVV08}, we introduce many other subsets defined with respect to the structure constants, like hyperbolicity constant $B^t(S_{g,m})(\mathbf{L})$, quasisymmetricity constant $C^t(S_{g,m})(\mathbf{L})$, harmonicity constant $F^t(S_{g,m})(\mathbf{L})$ etc (Example \ref{example:exhaustion}). For $\mathbf{L}\in \mathbb{R}_{>0}^{2m}$, we proved that these subsets are comparable to each other, which allows us to prove the volume finiteness for all the above mentioned subsets by proving the volume finiteness for $\mathcal{H}^t(S_{g,m})(\mathbf{L})$, particularly we obtain the volume finiteness of $\mathcal{AH}^t(S_{g,m})(\mathbf{L})$. 


\begin{thm}[Main Theorem \ref{thm:main}]
For $\mathbf{L}\in \mathbb{R}_{>0}^{2m}$, the Goldman symplectic volume of $\mathcal{H}^t(S_{g,m})(\mathbf{L})$ is bounded above by a positive polynomial of $(t,\mathbf{L})$.
\end{thm}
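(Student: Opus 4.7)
The plan is to exploit a pants decomposition of $S_{g,m}$ to reduce the volume computation to an explicit finite-dimensional integration in coordinates adapted to the Goldman symplectic form, and then to show that the boundedness condition defining $\mathcal{H}^t$ forces each coordinate to lie in an interval of polynomial size in $(t,\mathbf{L})$.

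First I would fix a topological pants decomposition $\mathcal{P}$ of $S_{g,m}$ and an ideal triangulation refining it, so that $\rm{Pos}_3(S_{g,m})(\mathbf{L})$ is parametrized by Fock--Goncharov $\mathcal{X}$-coordinates: two triangle invariants per triangle and two edge invariants per edge, with the boundary simple root lengths $\mathbf{L}$ imposed as linear constraints on the edge coordinates at $\partial S_{g,m}$. In this coordinate system, as in \cite{FG06}, the Goldman symplectic form has a simple log-constant expression, so its Liouville volume form factorizes as a product of $d\log$ of triangle invariants together with length--twist type factors $d\ell_\gamma\wedge d\tau_\gamma$ along the curves of $\mathcal{P}$.

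Second I would translate the defining bound of $\mathcal{H}^t(S_{g,m})(\mathbf{L})$ (Definition \ref{definition:bpos}) into explicit bounds on these coordinates. By construction the extra projective invariants measure the deviation from the $3$-Fuchsian locus, so their $t$-boundedness should directly control the triangle invariants, confining each of them to an interval of length bounded by a polynomial in $t$. Combined with the linear constraints imposed by $\mathbf{L}$, this will also give polynomial control (in $t$ and $\mathbf{L}$) on the edge invariants attached to arcs of the triangulation not crossing curves of $\mathcal{P}$.

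Third I would bound the pants-curve coordinates themselves. Each length $\ell_\gamma$ of a curve in $\mathcal{P}$ has to be controlled by a polynomial in $t$ and $\mathbf{L}$: otherwise, by the comparison estimates already invoked in the paper between projective invariants, area, and hyperbolicity constants, one of the triangle invariants on a pair of pants adjacent to $\gamma$ would be forced to leave its $t$-interval. The twist parameter $\tau_\gamma$ is naturally $\ell_\gamma$-periodic after quotienting by Dehn twists in $\rm{Mod}(S_{g,m})$, so its fundamental domain has length at most $\ell_\gamma$, again polynomial in $(t,\mathbf{L})$. Integrating the Goldman volume form over the product of these polynomially sized intervals then yields the claimed polynomial upper bound, and $\rm{Mod}(S_{g,m})$-invariance of the form allows the bound to descend from $\rm{Pos}_3(S_{g,m})(\mathbf{L})$ to the moduli space after accounting for the finite cover by choices of pants decomposition realizing bounded pants curves.

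The main obstacle I anticipate is step three: extracting quantitative, polynomial (rather than merely finite or exponential) control of the pants-curve lengths and of the Fock--Goncharov edge invariants from the boundedness of the projective invariants defining $\mathcal{H}^t$. This is the place where the classical Teichm\"uller estimates must be replaced by their $\rm{PGL}(3,\mathbb{R})$ analogues, and where the comparisons between $\mathcal{H}^t$, $\mathcal{AH}^t$, $B^t$, $C^t$, $F^t$ developed earlier in the paper will have to be sharp enough to produce polynomial, not just coarse, bounds in the relevant coordinates.
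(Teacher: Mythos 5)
Your step three contains a claim that is false, and it is the crux of the argument: the pants-curve lengths $\ell_1(\gamma), \ell_2(\gamma)$ for $\gamma\in\mathcal{P}$ are \emph{not} bounded by any polynomial in $(t,\mathbf{L})$ on $\mathcal{H}^t(S_{g,m})(\mathbf{L})$. The $3$-Fuchsian locus lies in $\mathcal{H}^t$ for every $t>0$ (all triple ratios equal $1$, the bulging invariants vanish, $\ell_2/\ell_1\equiv 1$), and it already contains hyperbolic surfaces whose length along any chosen pants curve is arbitrarily large. Bounded triangle invariants and bounded twist-bulging deviation do not constrain the lengths at all; the assertion that a long $\gamma$ ``would force a triangle invariant to leave its $t$-interval'' has no basis --- indeed the identity $T(\mathbf{\Delta})\equiv 1$ is compatible with every length. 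The same phenomenon is exactly why Mirzakhani's computation of $\rm{Vol}(\mathcal{M}_{g,m}(\mathbf{L}))$ cannot be done by bounding Fenchel--Nielsen coordinates in a box; fixing a topological pants decomposition and parametrizing moduli space by it does not give a bounded fundamental domain.

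The paper's actual proof avoids this trap by following Mirzakhani's unfolding strategy. It combines the generalized Wolpert Darboux coordinates (Theorem \ref{theorem:Darboux}, Proposition \ref{proposition:gsvf}), a Mirzakhani-type integration formula (Theorem \ref{theorem:mirint}), and the Huang--Sun generalized McShane identity (Theorem \ref{theorem:mcid}). One writes $\ell_1(\alpha)=\sum\tilde D+\sum\tilde R$, differentiates $L_1 V^t_{g,m}(\mathbf{L})$ in $L_1$, and pushes the sum over $\mathcal{P}_\alpha$ down to an integral over the lengths of the cut curves against kernels of the form $H(\cdot\,,L_1)$, which \emph{decay exponentially} in the lengths. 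The $t$-boundedness is then used only where it genuinely applies: to bound $|\tau|,|\phi_1|,|\phi_1'|,|Y_P|,|X_P|,|\theta_2-\theta_1|$ and the ratio $\ell_2/\ell_1$, yielding a $t$-dependent box for the internal and twist-difference coordinates and a wedge $\ell_2\le t\ell_1$ in the length plane. The length integrals are then $\int_0^\infty x^d/(1+e^{x-ct-c'L_1})\,dx$, which by Lemma \ref{lemma:pl} produce polylogarithms $-Li_{d+1}(-e^{ct+c'L_1})$ that are polynomially bounded in $(t,\mathbf{L})$. An induction on $2g-2+m$ handles the surfaces obtained by cutting. In short: you need the exponential decay of the McShane kernel to tame the unbounded length directions, not a polynomial $L^\infty$-bound on the coordinates, which does not exist.
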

Notice that the union $\cup_{t>0} \mathcal{H}^t(S_{g,m})(\mathbf{L})$ provides an exhaustion of $\mathcal{H}(S_{g,m})(\mathbf{L})$.
\begin{cor}
For $\mathbf{L}\in \mathbb{R}_{>0}^{2m}$, the Goldman symplectic volume of $\int_{\mathcal{H}(S_{g,m})(\mathbf{L})} 
e^{-t} dVol$ is finite where $t$ is defined to be the minimal value such that $\rho \in \mathcal{H}^t(S_{g,m})(\mathbf{L})$.
\end{cor}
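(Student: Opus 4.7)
The plan is to reduce the finiteness of $\int_{\mathcal{H}(S_{g,m})(\mathbf{L})} e^{-t}\, dVol$ to the polynomial volume bound of the Main Theorem via a layer-cake argument, so the only real work is checking that the family $\{\mathcal{H}^t(S_{g,m})(\mathbf{L})\}_{t>0}$ has the structural properties that make the function $t(\rho):=\min\{t:\rho\in\mathcal{H}^t(S_{g,m})(\mathbf{L})\}$ a reasonable measurable function on $\mathcal{H}(S_{g,m})(\mathbf{L})$.

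First I would record the monotonicity $\mathcal{H}^{t}(S_{g,m})(\mathbf{L})\subseteq\mathcal{H}^{t'}(S_{g,m})(\mathbf{L})$ for $t\leq t'$, which is built into the definition of the bounded projective invariants in Definition \ref{definition:bpos}, together with the fact that each $\mathcal{H}^{t}(S_{g,m})(\mathbf{L})$ is closed in $\mathcal{H}(S_{g,m})(\mathbf{L})$ (the bounded invariants are continuous functions of $\rho$). These two facts guarantee that $t(\rho)$ is well defined and lower semicontinuous, hence Borel measurable, with
\[
\{\rho\in\mathcal{H}(S_{g,m})(\mathbf{L}):t(\rho)\leq u\}=\mathcal{H}^u(S_{g,m})(\mathbf{L})
\]
for every $u>0$, and with $t(\rho)<\infty$ almost everywhere since $\cup_{u>0}\mathcal{H}^u(S_{g,m})(\mathbf{L})$ exhausts $\mathcal{H}(S_{g,m})(\mathbf{L})$.

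Next I would apply the identity $e^{-t(\rho)}=\int_{t(\rho)}^\infty e^{-u}\, du$ and Fubini's theorem:
\[
\int_{\mathcal{H}(S_{g,m})(\mathbf{L})} e^{-t(\rho)}\, dVol=\int_0^\infty e^{-u}\,\mathrm{Vol}\bigl(\mathcal{H}^u(S_{g,m})(\mathbf{L})\bigr)\, du.
\]
By the Main Theorem there is a positive polynomial $P(u,\mathbf{L})$ with $\mathrm{Vol}(\mathcal{H}^u(S_{g,m})(\mathbf{L}))\leq P(u,\mathbf{L})$ for all $u>0$. Since $\mathbf{L}$ is fixed, $\int_0^\infty e^{-u}P(u,\mathbf{L})\, du$ is the integral of a polynomial against the standard exponential weight and is therefore finite, which concludes the argument.

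The only subtle point, and the one I would want to state carefully, is the closedness/monotonicity of the family $\mathcal{H}^t(S_{g,m})(\mathbf{L})$ so that $t(\rho)$ is indeed the minimum (rather than an infimum that may fail to be attained) and is Borel measurable; once that is in place, Fubini plus the polynomial bound is immediate. Everything else is a routine consequence of the Main Theorem, which is why this statement is phrased as a corollary rather than a theorem.
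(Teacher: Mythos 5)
Your proof is correct and takes essentially the same approach as the paper, which disposes of this corollary in one line by observing that $\sum_{k=1}^{\infty} R(k)/e^k$ converges for any polynomial $R$ (a discrete layer-cake: bound the integral by $\sum_k e^{-(k-1)}\operatorname{Vol}(\mathcal{H}^k(S_{g,m})(\mathbf{L}))$ and apply the Main Theorem's polynomial bound). You have simply written the continuous version of the same telescoping argument via Tonelli, and added the routine but worthwhile remark that one should check monotonicity and closedness of the family $\{\mathcal{H}^t(S_{g,m})(\mathbf{L})\}_t$ so that $t(\rho)$ is genuinely a minimum and Borel measurable; the paper leaves this implicit.
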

There are two crucial tools used by Mirzakhani \cite{Mir07a} for integrating over the moduli space $\mathcal{M}_{g,m}(\mathbf{L})$:
\begin{enumerate}
\item Wolpert's Magic Formula \cite{Wol82,Wol83} which expresses the Weil--Petersson symplectic form in terms the Fenchel--Nielsen coordinates with respect to a pants decomposition $\mathcal{P}$ and a choice of transverse arcs to $\mathcal{P}$;
\item McShane's identity \cite{McS98} and generalized McShane's identity for the hyperbolic surface with geodesic boundary in \cite[Theorem 4.2]{Mir07a}.
\end{enumerate}

We prove our main theorem by adopting the Mizakhani's proof in \cite{Mir07a} for $\mathcal{H}_3(S_{g,m})(\mathbf{L})$ except where we estimate. The original Mirzakhani's Integration formula can be naturally extended to Theorem \ref{theorem:mirint}. Similarly, we have two corresponding crucial tools:
\begin{enumerate}
\item (Theorem \ref{theorem:Darboux}) generalized Wolpert's Magic Formula provided by Sun--Wienhard--Zhang \cite{SWZ17,SZ17} with respect to an ideal triangulation $\mathcal{T}$ subordinate to a pants decomposition $\mathcal{P}$ and a choice of transverse arcs to $\mathcal{P}$;
\item (Theorem \ref{theorem:mcid}) generalized McShane's identity provided by Huang--Sun \cite{HS19} for each simple root length of the boundary component, which is expressed similarly to McShane--Mirzakhani identity.
\end{enumerate}
Then we use the definition of $\mathcal{H}^t(S_{g,m})(\mathbf{L})$ and Lemma \ref{lemma:pl} to estimate, which show that the existence of such geometric quantity.

By \cite{Z15}, the Mumford compactness theorem fails on the entire space $\mathcal{H}(S_{g,m})(\mathbf{L})$.
We will show the analog of Mumford compactness theorem for the area bounded subset $\mathcal{AH}^{t}(S_{g,m})(\mathbf{L})$. Let $\mathcal{AH}^{t}(S_{g,m})(\mathbf{L})_\epsilon$ be the subset of $\mathcal{AH}^{t}(S_{g,m})(\mathbf{L})$ with the simple root length systoles are bigger or equal to $\epsilon>0$.  
\begin{thm}[Theorem \ref{theorem:mum}]
The subset $\mathcal{AH}^{t}(S_{g,m})(\mathbf{L})_\epsilon$ with $\mathbf{L}\in \mathbb{R}_{>0}^{2m}$ is compact.
\end{thm}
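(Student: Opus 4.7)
The plan is to mimic the classical Mumford compactness argument for the Teichm\"uller space, replacing the hyperbolic ingredients with the convex projective ones developed earlier in the paper. Take a sequence $[\rho_n] \in \mathcal{AH}^{t}(S_{g,m})(\mathbf{L})_\epsilon$ and lift to representatives $\rho_n \in \mathrm{Pos}_3(S_{g,m})(\mathbf{L})$. The area bound combined with the comparability of the subsets mentioned in the introduction (between the area bounded subset and the projective invariants bounded subset) gives a uniform $t'=t'(t,\mathbf{L})$ with $\rho_n \in \mathcal{H}^{t'}(S_{g,m})(\mathbf{L})$, so all of the generalized triangle and edge projective invariants of $\rho_n$ are uniformly bounded.

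Next I would establish the $\mathbb{RP}^2$ analogue of Bers' lemma: the $\epsilon$-lower bound on the simple root length systole together with the Euler characteristic and the bounded projective invariants forces the existence of a pants decomposition $\mathcal{P}_n$ of $S_{g,m}$ whose simple root lengths are all bounded above by some $B=B(\epsilon, g, m, t, \mathbf{L})$. This is where the area/projective bound is essential, because without it the ``thick'' part of a convex $\mathbb{RP}^2$ surface can blow up in width even when systoles are bounded below (the failure in \cite{Z15}). Since there are only finitely many topological types of pants decompositions up to the mapping class group $\mathrm{Mod}(S_{g,m})$, after acting by a suitable element of $\mathrm{Mod}(S_{g,m})$ and passing to a subsequence I can assume $\mathcal{P}_n=\mathcal{P}$ is a fixed pants decomposition, subordinate to a fixed ideal triangulation $\mathcal{T}$.

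I would then read off the coordinates of $\rho_n$ in the Goldman/Sun--Wienhard--Zhang Fenchel--Nielsen-type system associated to $(\mathcal{P},\mathcal{T})$ from Theorem \ref{theorem:Darboux}. The simple root length coordinates along curves of $\mathcal{P}$ lie in the compact interval $[\epsilon, B]$; the internal pair-of-pants invariants and the triangle/edge shape parameters of $\mathcal{T}$ are bounded above and below by the uniform projective invariant bound from the first step; finally the twist parameters along each curve of $\mathcal{P}$ can be normalized to lie in a bounded fundamental interval for the Dehn twist action (again at the cost of passing to $\mathrm{Mod}(S_{g,m})$-translates). Thus the sequence of coordinate vectors lives in a compact subset of $\mathbb{R}^{N}$, and a subsequence converges to a coordinate vector which, by continuity of the chart, defines a limit representation $\rho_\infty \in \mathrm{Pos}_3(S_{g,m})(\mathbf{L})$.

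The last step is to verify that the limit class $[\rho_\infty]$ lies back in $\mathcal{AH}^{t}(S_{g,m})(\mathbf{L})_\epsilon$. The conditions ``area $\leq t$'', ``systole $\geq \epsilon$'' and ``fixed boundary lengths $\mathbf{L}$'' are all closed conditions: the canonical area is continuous in the coordinates (Definition \ref{definition:carea}), the simple root length systole is the infimum of continuous length functions and hence upper semicontinuous, and the boundary lengths are preserved along the chart. So $[\rho_\infty] \in \mathcal{AH}^{t}(S_{g,m})(\mathbf{L})_\epsilon$, proving sequential compactness. The main obstacle that I expect is the Bers-type step: one needs to quantitatively control how the area and projective invariant bounds constrain the geometry of the thick part in order to extract a short pants decomposition. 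Everything else is a bookkeeping exercise in the coordinates supplied by Theorem \ref{theorem:Darboux}.
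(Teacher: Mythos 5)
Your proof plan follows the same overall strategy as the paper: lift a sequence, bound the internal pair-of-pants invariants via the area/projective comparability (the paper invokes Proposition \ref{proposition:trib} directly), obtain a short pants decomposition via the $\mathbb{RP}^2$ Bers constant (Proposition \ref{proposition:bers}), fix the decomposition after a mapping class translation, and read off a compact coordinate box in the Darboux chart from Theorem \ref{theorem:Darboux}. However, there is a genuine gap in your twist-normalization step. For each pants curve $\gamma$ there are \emph{two} twist parameters (equivalently $\frac{\theta_1+\theta_2}{2}(\gamma)$ and $(\theta_2-\theta_1)(\gamma)$), but the Dehn twist along $\gamma$ translates the pair $(\theta_1(\gamma),\theta_2(\gamma))$ simultaneously by $(\ell_1(\gamma),\ell_2(\gamma))$. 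Its orbit is a rank-one lattice in a two-dimensional space, so its fundamental domain is a noncompact strip: you can only normalize the component along $(\ell_1,\ell_2)$ (essentially $\frac{\theta_1+\theta_2}{2}$), and the transverse twist-bulging coordinate $(\theta_2-\theta_1)(\gamma)$ is unaffected. It is also not controlled by the bulging/triangle invariant bounds, which concern ideal quadrilaterals inside a single pair of pants, not the twist across a pants curve. The paper closes this gap with a separate argument: if the twist-bulging along some $\gamma$ in the sequence were unbounded, then by Proposition \ref{proposition:tbu} (or \cite[Theorem 3.7]{FK16}) an arbitrarily long projective cylinder would appear, forcing the canonical area to blow up, contradicting the $t$-area bound. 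You need this extra use of the area bound; asserting that all twist parameters can be placed in a bounded fundamental interval for the Dehn twist action is incorrect. (A minor aside: your final closedness verification that the limit remains in $\mathcal{AH}^{t}(S_{g,m})(\mathbf{L})_\epsilon$ is a reasonable addition that the paper leaves implicit.)
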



We would like to ask the following two questions as a first step for the further investigation:
\begin{enumerate}
\item For $\mathcal{H}^t(S_{g,m})(\mathbf{L})$, how the minimal value $t$ such that $\rho \in \mathcal{H}^t(S_{g,m})(\mathbf{L})$ varies in $\mathcal{H}_3(S_{g,m})(\mathbf{L})$ with respect to the Fock--Goncharov parameters subordinate to a pants decomposition that we use? 
\item For $\mathcal{AH}^t(S_{g,m})(\mathbf{L})$, how to express the canonical area in term of the Fock--Goncharov parameters (even for one pair of pants)? 
\end{enumerate}
There are several approaches to get some geometric quantities for the moduli space $\mathcal{H}(S_{g,m})(\mathbf{L})$:
\begin{enumerate}
\item We can try to find both the lower and upper bound of the Goldman symplectic volume of $\mathcal{H}^t(S_{g,m})(\mathbf{L})$($\mathcal{AH}^t(S_{g,m})(\mathbf{L})$ resp.) sharp enough such that we can compute the top term of its expansion in $t$.
\item In \cite{W18}, Wienhard suggested to divide $\rm{Pos}_3(S_{g,m})(\mathbf{L})$ by a larger group than the mapping class group which preserves the Goldman symplectic form such that the volume of the quotient is finite. In this approach, it is not clear if the cluster mapping class group in \cite[page 29]{FG06} works.
\item In \cite{Mir07b}, Mirzakhani demonstrated the link between the volumes and the intersection theory on the moduli space of curves which allowed her to again prove Witten--Kontsevich theorem \cite{Ko92,Wi91}. We expect an intersection theory for $\mathcal{H}(S_{g,m})(\mathbf{L})$.
\end{enumerate}

\section{Convex $\mathbb{RP}^2$ structures on surfaces} 
\label{section:2}
We recall some preliminaries for investigating the moduli space of unmarked convex $\mathbb{RP}^2$ structures on the surface, including the convex $\mathbb{RP}^2$ structures on surfaces, the positive representations and the projective invariants that are used to parameterize the moduli space.

\subsection{Convex $\mathbb{RP}^2$ structure}
Let $S=S_{g,m}$ be a smooth surface of genus $g$ and $m$ holes with negative Euler characteristic.
\begin{defn}[$\mathbb{RP}^2$ surface]
The $\mathbb{RP}^2$ surface $\Sigma$ is a quotient $\Omega/\Gamma$ diffeomorphic to a smooth surface $S$, where $\Omega$ is a convex domain in $\mathbb{RP}^2$ and $\Gamma$ is a discrete subgroup of $\rm{PGL}(3,\mathbb{R})$ acting properly on $\Omega$.

Two $\mathbb{RP}^2$ surfaces $\Omega/\Gamma$ and $\Omega'/\Gamma'$ are equivalent if there is a projective transformation $g\in \rm{PGL}(3,\mathbb{R})$ such that $(\Omega',\Gamma')=(g  \Omega, g  \Gamma   g^{-1})$.
\end{defn}

The $\mathbb{RP}^2$ surface $\Sigma$ is equivalent to a pair $(\rho,f)$:
\begin{itemize}
\item $\rho:\pi_1(S)\rightarrow \operatorname{PGL}(3,\mathbb{R})$ is the holonomy representation of $\Sigma$ where $\rho(\pi_1(S))=\Gamma$;
\item $f:\widetilde{S}\rightarrow\mathbb{RP}^2$ is the developing map where $f(\widetilde{S})=\Omega$. 
\end{itemize}
The shape of the domain $\Omega$ is an important feature for the $\mathbb{RP}^2$ surface. 
\begin{defn}
\begin{enumerate}
\item A subset $\Omega$ in $\mathbb{RP}^2$ is {\em convex} if the intersection of $\Omega$ with every line is connected.
\item The convex subset $\Omega$ is {\em properly convex} if $\Omega$ is contained in $\mathbb{R}^2 \cong \mathbb{RP}^2 \backslash \mathbb{RP}^1$ for some hyperplane $\mathbb{RP}^1$.
\item The properly convex subset $\Omega$ is {\em strictly convex} if the boundary $\partial \Omega$ contains no line segments. 
\end{enumerate}
\end{defn}

\begin{defn}[Convex $\mathbb{RP}^2$ structure on $S$]
A {\em (marked) convex $\mathbb{RP}^2$ structure} on a smooth surface $S$ is defined to be a diffeomorphism $h: S\rightarrow \Sigma$ where $\Sigma$ is a convex $\mathbb{RP}^2$ surface.

We say that two (marked) convex $\mathbb{RP}^2$ structures $(h,\Sigma)$ and $(h',\Sigma')$ are equivalent if and only if there is a projective equivalence $g: \Sigma\rightarrow \Sigma'$ such that $g\circ h$ is isotopic to $h'$.

The {\em unmarked convex $\mathbb{RP}^2$ structure} on the smooth surface $S$ is the (pure) mapping class group orbit of a marked convex $\mathbb{RP}^2$ structure. 

We say that two unmarked convex $\mathbb{RP}^2$ structures $[h,\Sigma]$ and $[h',\Sigma']$ are equivalent if and only if there is a projective equivalence $g:\Sigma \rightarrow \Sigma'$ and an orientation preserving diffeomorphism $u$ of $S$ which fixes the boundary such that $g\circ h \circ u$ is isotopic to $h'$.
\end{defn}
The unmarked convex $\mathbb{RP}^2$ structure is the mapping class group orbit of marked convex $\mathbb{RP}^2$ structures. There is a natural (Finsler) metric defined for any convex domain.
\begin{defn}[Hilbert metric]
Given a convex domain $\Omega \subset \mathbb{R}^2\subset \mathbb{RP}^2$, for any two distinct points $a,b \in \Omega$, let $p_a$ and $p_b$ be the points at which the straight line $ab$ intersects the boundary of $\Omega$, where $p_a$ is closer to $a$ and $p_b$ is closer to $b$. Let $|\cdot|$ be the Euclidean length in $\mathbb{R}^2$. The {\em Hilbert distance} is defined to be
\begin{equation*}
d_\Omega(a,b)=\frac{1}{2}\log \left(\frac{|a-p_b|}{|b-p_b|} \cdot \frac{|b-p_a|}{|a-p_a|}\right).
\end{equation*}
The metric defined by the Hilbert distance is called the {\em Hilbert metric}. 
The Hilbert distance is invariant under projective transformations. Thus for a convex $\mathbb{RP}^2$ surface $\Omega/\Gamma$, the Hilbert metric on $\Omega$ descends to the {\em Hilbert metric} on $\Omega/\Gamma$. 
\end{defn}
In the special case when $\Omega$ is an ellipse for the convex $\mathbb{RP}^2$ surface $\Omega/\Gamma$. Then the Hilbert metric on is $\Omega$ is the usual hyperbolic metric on $\Omega$ with respect to the Klein model.

\begin{defn}[Area]
For any $(x,v) \in T \Omega$ where $x$ belongs to the convex domain $\Omega$ and $v$ is the tangent vector in $\mathbb{R}^2$, we note $x^+$ ($x^-$ resp.) the intersection points of the boundary $\partial \Omega$ and the ray defined by $x$ and $v$ ($-v$ resp.). We define
\[|v|_x = \frac{d}{dt}|_{t=0} d_\Omega(x,x+tv)=\frac{1}{2}\left(\frac{1}{|x-x^-|}+\frac{1}{|x-x^+|} \right) |v|.\]
\begin{itemize}
\item Let $B_x(1)=\{v \in T_x\Omega\; |\; |v|_x<1\}$.
\item Let $E_B=\pi$ be the Euclidean volume
of the open unit ball in $\mathbb{R}^2$.
\item Let $\rm{Leb}$ be the canonical Lebesgue measure of $\mathbb{R}^2$ equal to $1$ on the unit square.
\item The density is $h_\Omega(x):= \frac{E_B}{\rm{Leb}(B_x(1))}$.
\end{itemize}
For any Borel set $A$ of $\Omega$, the {\em area} of $A$ is defined with respect to the Busemann measure:
\[\rm{Vol}_{\Omega}(A)=\int_A h_\Omega(x) d\rm{Leb}(x).\]
\end{defn}
\begin{remark}
\label{remark:blaschke}
There are many other areas defined with respect to different proper densities \cite{V13}. By a co-compactness result of Benz\'ecri \cite{B60}, any pair of proper densities are comparable. Notably, there is the Blaschke metric which is Riemannian and uniformly comparable to the Hilbert metric \cite[Proposition 3.4]{BH13}. 
\end{remark}

\subsection{Positive representations}
In this subsection, let $S=S_{g,m}$ be a topological surface of genus $g$ and $m$ holes with negative Euler characteristic. We study the convex $\mathbb{RP}^2$ structure on $S$ from representation theory point of view.  The holonomy representations of the $\mathbb{RP}^2$ surfaces are contained in $\rm{Hom}(\pi_1(S),\rm{PGL}(3,\mathbb{R}))$.
Modulo the equivalence relation, the {\em representation variety} for $\rm{PGL}(3,\mathbb{R})$ is 
\[\rm{Hom}(
\pi_1(S),\rm{PGL}(3,\mathbb{R}))/\rm{PGL}(3,\mathbb{R})\] where $\rm{PGL}(3,\mathbb{R})$ acts by conjugation. When the holonomy representation is nice enough, there is a one-to-one correspondence between the convex $\mathbb{RP}^2$ structure on $S$ up to equivalence and its holonomy representation up to conjugation. 

The {\em $3$-Fuchsian representation} is the composition of the discrete faithful representation from $\pi_1(S)$ to $\rm{PSL}(2,\mathbb{R})$ and the irreducible representation $\rm{PSL}(2,\mathbb{R})$ to $\rm{PGL}(3,\mathbb{R})$. 
\begin{defn}\cite[Hitchin component]{Hit92}
\label{definition:Hit}
For $S=S_{g,0}$ being a closed surface of genus $g\geq 2$, the $\rm{PGL}(3,\mathbb{R})$-{\em Hitchin component} $\rm{Hit}_3(S)$ is the connected component of $\rm{Hom}(
\pi_1(S),\rm{PGL}(3,\mathbb{R}))/\rm{PGL}(3,\mathbb{R})$ that contains all the deformations of $3$-Fuchsian representations.
\end{defn}
\begin{thm}\cite{CG93,G90}
\label{thm:11cor}
For the integer $g\geq 2$, the moduli space of marked strictly convex $\mathbb{RP}^2$ structures on the surface $S_{g,0}$ is homeomorphic to $\rm{Hit}_3(S_{g,0})$, which is a cell of dimension $16g-16$.  
\end{thm}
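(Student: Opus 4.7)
My plan is to identify the moduli space $\mathcal{C}(S_{g,0})$ of marked strictly convex $\mathbb{RP}^2$ structures with $\rm{Hit}_3(S_{g,0})$ via the holonomy map, and then compute the dimension. By the Ehresmann--Thurston principle, the holonomy map from the deformation space of $\mathbb{RP}^2$ structures on $S_{g,0}$ to $\rm{Hom}(\pi_1(S),\rm{PGL}(3,\mathbb{R}))/\rm{PGL}(3,\mathbb{R})$ is a local homeomorphism, so the task reduces to identifying its image with $\rm{Hit}_3(S_{g,0})$ and checking the image is a cell of the claimed dimension.

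First I would show that every convex $\mathbb{RP}^2$ holonomy lies in $\rm{Hit}_3(S_{g,0})$. When $\Omega$ is a round disk (the Klein model), the Hilbert metric is hyperbolic and the holonomy is $3$-Fuchsian, so every Fuchsian representation appears as a convex $\mathbb{RP}^2$ holonomy. Using Goldman's parameterization of convex $\mathbb{RP}^2$ structures on pairs of pants together with a gluing construction, one sees that $\mathcal{C}(S_{g,0})$ is connected, and hence contained in the connected component of the $3$-Fuchsian locus in the representation variety, which by Definition \ref{definition:Hit} is $\rm{Hit}_3(S_{g,0})$.

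Next I would prove the converse, that every $\rho\in\rm{Hit}_3(S_{g,0})$ is realized as the holonomy of some strictly convex $\mathbb{RP}^2$ structure. Let $U\subset \rm{Hit}_3(S_{g,0})$ denote the set of convex holonomies. It is nonempty ($3$-Fuchsian points lie in it) and open by the Ehresmann--Thurston principle together with the openness of strict convexity under small perturbations of the developing map. The step I expect to be the main obstacle is closedness: for $\rho_n\to\rho$ with invariant domains $\Omega_n$, one must extract a Hausdorff limit $\Omega=\lim\Omega_n$ and show that $\Omega$ remains properly convex (does not collapse to a segment or point) and that $\rho(\pi_1(S))$ acts properly discontinuously and cocompactly on it. This is a Benoist/Choi--Goldman type compactness argument: discreteness and faithfulness are automatic along the Hitchin component, preventing the limit from becoming elementary, while cocompactness on $S$ together with Hilbert-metric diameter/systole estimates prevents $\Omega_n$ from degenerating. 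Since $\rm{Hit}_3(S_{g,0})$ is connected and $U$ is nonempty, open and closed, one concludes $U=\rm{Hit}_3(S_{g,0})$.

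Finally, for the cell structure and dimension $16g-16$, I would pick a pants decomposition of $S_{g,0}$ into $2g-2$ pairs of pants along $3g-3$ simple closed curves, parameterize convex $\mathbb{RP}^2$ structures on each pair of pants by Goldman's coordinates (two internal moduli plus the boundary eigenvalue invariants) and glue by twist-bulge parameters at each of the $3g-3$ curves. Each pair of pants then contributes $8$ real parameters, giving total dimension $8(2g-2)=16g-16$, while the explicit global chart exhibits the moduli space as a cell diffeomorphic to $\mathbb{R}^{16g-16}$. Equivalently, one may invoke Hitchin's original analytic description of $\rm{Hit}_3(S_{g,0})$ as the vector space of holomorphic quadratic and cubic differentials on a fixed Riemann surface structure, whose real dimension is manifestly $16g-16$.
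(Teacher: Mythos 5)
Your outline recapitulates the standard argument from the cited references: Goldman \cite{G90} parametrizes convex $\mathbb{RP}^2$ structures on pairs of pants and glues them to get the cell structure and dimension, and Choi--Goldman \cite{CG93} supply the closedness of the set of convex holonomies inside the representation variety, which combined with Koszul openness and connectedness identifies that set with $\rm{Hit}_3(S_{g,0})$ (the component so named after Hitchin \cite{Hit92}). The paper itself cites these references rather than proving the theorem, so there is no internal proof to compare against; your route is the route the literature takes.

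One small caveat in the dimension count: you say each pair of pants ``contributes $8$ real parameters, giving total dimension $8(2g-2)$,'' but as stated this double-counts the shared boundary eigenvalue invariants on each of the $3g-3$ interior curves (costing $2(3g-3)$ matching conditions) and separately omits the $2(3g-3)$ twist-bulge parameters you mention. These two corrections cancel exactly, so the answer $16g-16$ is right, but the bookkeeping should make the cancellation explicit rather than arrive at the number by multiplying $8$ by the number of pants. The Hitchin--Higgs-bundle confirmation (quadratic plus cubic differentials, of complex dimensions $3g-3$ and $5g-5$) is a clean consistency check. The closedness step you flag is indeed the heart of \cite{CG93}; you'd need to invoke the fact that Hitchin representations are discrete, faithful, and irreducible, plus a Benz\'ecri-type cocompactness/compactness argument in the space of convex bodies, to rule out degeneration of $\Omega_n$ in the Hausdorff limit.
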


For general $n$, the geometric features of the Hitchin component were unravelled by Fock and Goncharov\cite{FG06} using positivity and independently by Labourie \cite{Lab06} using Anosov flows. Thus the notion of Hitchin representation was generalized to {\em positive representation} and {\em Anosov representation} in two directions. Both the positive representations and the Anosov representations are proved to be discrete and faithful.

We focus on the positive representations in this paper. Let us recall the definition of the positive representations.

\begin{defn}[Flags]
\label{defn:flag}
A \emph{flag} $F$ in $\mathbb{R}^3$ is a maximal filtration of vector subspaces of $\mathbb{R}^3$:
\[
\{0\}=F^{(0)} \subset F^{(1)} \subset F^{(2)} \subset F^{(3)}=\mathbb{R}^3,\quad \dim F^{(1)}=i,
\]
denoted by $(F^{(1)},F^{(2)})$. The {\em flag variety} is denoted by $\mathcal{B}$.
Usually, we consider the flag $(F^{(1)},F^{(2)})$ as $(x,X)$ where $x\in \mathbb{RP}^2$ and $X$ is a line crossing $x$ in $\mathbb{RP}^2$.

A \emph{basis} for a flag $F=(F^{(1)},F^{(2)})$ is a basis $(f_1,f_2,f_3)$ for the vector space $\mathbb{R}^3$ such that the first $i$ vectors form a basis for $F^{(i)}$, for $i=1,2$.

\end{defn}

\begin{defn}[Generic position]
We say that the (ordered) $d$-tuple of flags $(F_1,\cdots,F_d)$ are in {generic position} if for any integers $1\leq a<b<c\leq d$ and non-negative integers $i_a,i_b,i_c$  with $i_a+i_b+i_c\leq 3$,
the sum \[F_{a}^{(i_a)}+F_{b}^{(i_b)}+F_{c}^{(i_c)}\]
is direct.
\end{defn}

\begin{defn} \cite[Lemma 9.7]{FG06}
\label{defn:pos3}
For the integer $d\geq 3$, we say that the $d$-tuple of generic flags $(F_1,\cdots,F_d)$ in $\mathbb{RP}^2$ is positive if and only if there exists a strictly convex curve (that bounds a strictly convex domain) such that the curve is passing the points $(F_1^{(1)},\cdots,F_d^{(1)})$ with respect to the cyclic order and is tangent to the lines $(F_1^{(2)},\cdots,F_d^{(2)})$ (see Figure \ref{figure:positiveconf}). We define $\rm{Conf}^+_d$ to be the space of positive $d$-tuples of flags up to diagonal projective transformations.

\begin{figure}[ht]
\centering
\includegraphics[scale=0.35]{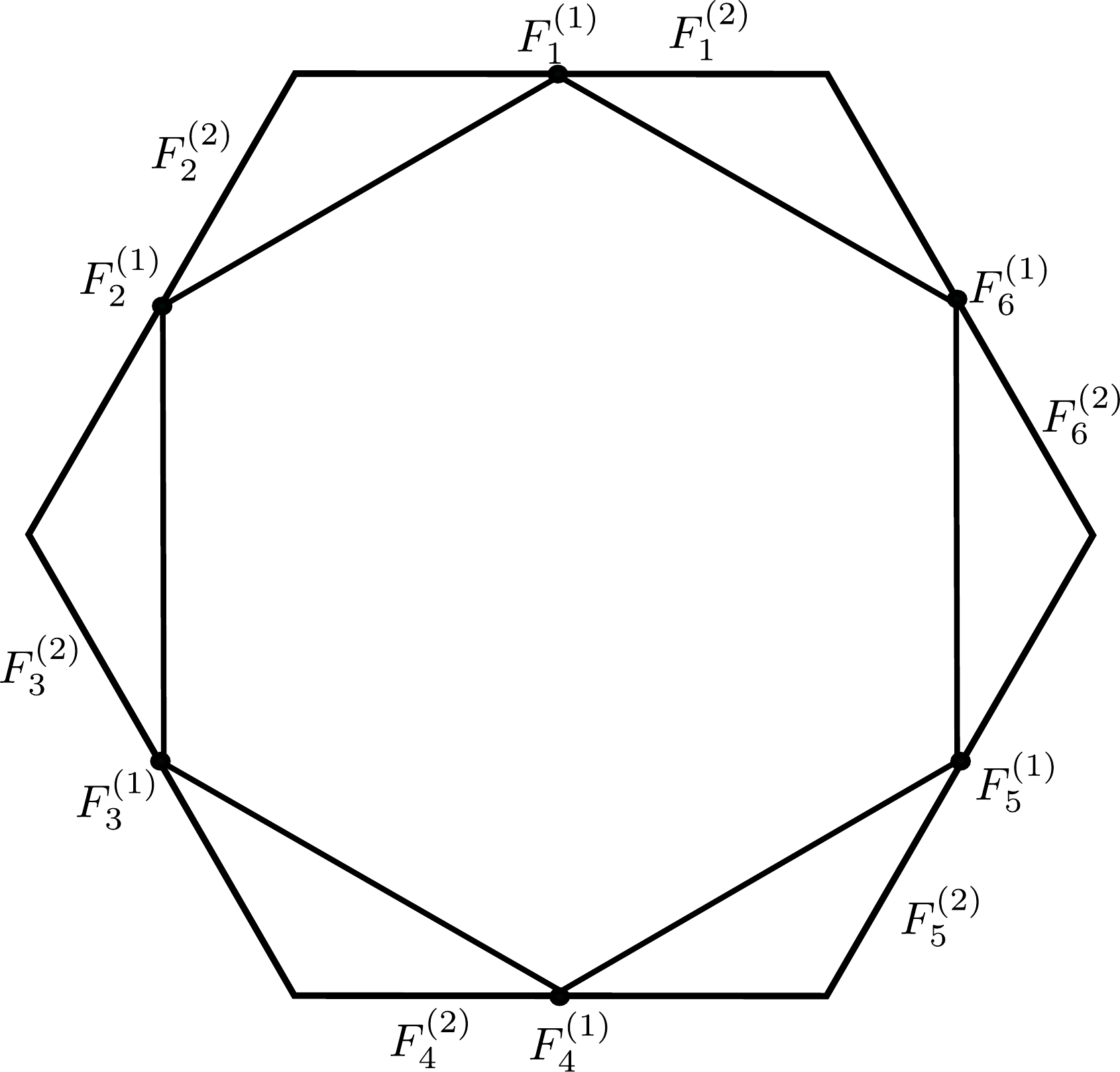}
\small
\caption{A positive $6$-tuple of flags.}
\label{figure:positiveconf}
\end{figure}

For any subset $C$ of a circle, we say that the continuous map $\xi:C\rightarrow \mathcal{B}$ is {\em positive} if for any cyclically ordered set $(x_1,\cdots,x_d)$ of $C$ with $d\geq 3$, $(\xi(x_1),\cdots,\xi(x_d))$ is a positive $d$-tuple of flags.
\end{defn}

\begin{defn}[Boundary at infinity]
Let $S_{g,m}$ be a topological surface with negative Euler characteristic. For each $\rho$ belongs to $\rm{Hom}(
\pi_1(S_{g,m}),\rm{PGL}(3,\mathbb{R}))/\rm{PGL}(3,\mathbb{R})$, we choose an auxiliary complete hyperbolic structure $\rho_h$ with geodesic boundary:
\begin{enumerate}
\item for each boundary component $\alpha$, if the monodromy $\rho(\alpha)$ is unipotent, we choose $\rho_h$ such that the boundary $\alpha$ is a cusp;
\item for each boundary component $\alpha$, if the monodromy $\rho(\alpha)$ is not unipotent, we choose $\rho_h$ such that the length of $\alpha$ with respect to $\rho_h$ is not zero. 
\end{enumerate}
Let $(\widetilde{S_{g,m}},\rho_h)$ be the universal cover of $(S_{g,m},\rho_h)$. The {\em boundary at infinity} $\partial_\infty \pi_1(S_{g,m})$ is the intersection of the absolute $\partial \mathbb{H}^2$ with the closure of $(\widetilde{S_{g,m}},\rho_h)$. 
\end{defn}

If $m=0$, $\partial_\infty \pi_1(S_{g,m})$ is homeomorphic to a circle. If $m\neq 0$ and each boundary $\alpha$ of $S_{g,m}$ with respect to $\rho_h$ is a cusp, the boundary at infinity $\partial_\infty \pi_1(S_{g,m})$ is homeomorphic to a circle. If $m\neq 0$ and the length of some geodesic boundary of $S_{g,m}$ with respect to $\rho_h$ is non-zero, the boundary at infinity $\partial_\infty \pi_1(S_{g,m})$ is homeomorphic to Cantor set on a circle. One can think of $\alpha^+$ and $\alpha^-$ approaching to each other when the length of $\alpha$ with respect to $\rho_h$ approaches to zero.

\begin{defn}[Positive representation]
The representation $\rho:\pi_1(S_{g,m})\rightarrow \rm{PGL}(3,\mathbb{R})$ is positive if there exists a $\rho$-equivariant map $\xi_\rho:\partial_\infty \pi_1(S_{g,m})\rightarrow \mathcal{B}$ is positive. We denote the space of positive representations by $\rm{Pos}_3(S_{g,m})$.
\end{defn}

Let us recall a nice geometric description of the positive representations. We restrict to $\rm{PGL}(3,\mathbb{R})$ case even through the following statements are true for any split semisimple algebraic group. 

\begin{thm}\cite[Theorem 2.8]{FG07}
\label{theorem:loxo}
We say an element in $\rm{PGL}(3,\mathbb{R})$ is {\em loxodromic} if it is conjugate to $diag(\lambda_1,\lambda_2,\lambda_3)$ where $\lambda_1>\lambda_2>\lambda_3>0$. A matrix is {\em totally positive} if all the minors are positive numbers. A upper triangular matrix is {\em totally positive} if all the minors are positive numbers except the ones that have to be zero due to the upper triangular condition.

Given any $\rm{PGL}(3,\mathbb{R})$-positive representation $\rho$, for any non-trivial non-peripheral $\gamma \in \pi_1(S_{g,m})$, the monodromy $\rho(\gamma)$ is conjugate to a totally positive matrix, thus loxodromic. 

For any non-trivial peripheral $\gamma \in \pi_1(S_{g,m})$, the monodromy $\rho(\gamma)$ is conjugate to a totally positive upper triangular matrix. Let $\lambda_1,\lambda_2,\lambda_3$ be the positive diagonal entries where $\lambda_1\geq \lambda_2 \geq \lambda_3>0$.
\end{thm}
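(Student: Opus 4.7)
The plan is to analyze the action of $\rho(\gamma)$ on the flag variety $\mathcal{B}$ using the dynamics of $\gamma$ on $\partial_\infty\pi_1(S_{g,m})$ together with the positivity of the $\rho$-equivariant limit map $\xi_\rho$. For a non-trivial non-peripheral $\gamma$, let $\gamma^\pm$ be its attracting and repelling fixed points on the boundary. Any two distinct boundary points sit inside some positive triple of flags, and positive triples consist of pairwise generic flags, so $\xi_\rho(\gamma^+)$ and $\xi_\rho(\gamma^-)$ are in generic position. I can then choose a basis $(e_1,e_2,e_3)$ of $\mathbb{R}^3$ with $\xi_\rho(\gamma^+)=(\langle e_1\rangle,\langle e_1,e_2\rangle)$ and $\xi_\rho(\gamma^-)=(\langle e_3\rangle,\langle e_2,e_3\rangle)$; since $\rho(\gamma)$ stabilizes both flags, in this basis $\rho(\gamma)=\mathrm{diag}(\lambda_1,\lambda_2,\lambda_3)$.

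To pin down the ordering and positivity of the $\lambda_i$, I pick a third boundary point $x$ with $\gamma^- < x < \gamma^+$ cyclically and write $\xi_\rho(x)^{(1)}=\langle ae_1+be_2+ce_3\rangle$; generic position of the triple $(\xi_\rho(\gamma^-),\xi_\rho(x),\xi_\rho(\gamma^+))$ makes $a,b,c$ all nonzero. The convergence $\gamma^n\cdot x\to\gamma^+$, combined with continuity of $\xi_\rho$, forces $\rho(\gamma)^n\cdot\xi_\rho(x)^{(1)}\to\langle e_1\rangle$, giving $|\lambda_1| > |\lambda_2|,|\lambda_3|$; the analogous argument applied to $\xi_\rho(x)^{(2)}$ yields $|\lambda_2| > |\lambda_3|$. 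To upgrade these magnitude inequalities to $\lambda_1>\lambda_2>\lambda_3>0$, I express the Fock--Goncharov triple ratio and edge invariants of the positive triple $(\xi_\rho(\gamma^-),\xi_\rho(x),\xi_\rho(\gamma\cdot x))$ in the above basis as Laurent monomials in the $\lambda_i$ and in the coordinates of $\xi_\rho(x)$; positivity of these monomials, guaranteed by Definition~\ref{defn:pos3}, forces each $\lambda_i>0$.

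For the total positivity statement, I invoke the characterization that an element of $\mathrm{PGL}(3,\mathbb{R})$ stabilizing a positive configuration of flags and acting with three distinct positive eigenvalues is conjugate, via the change-of-basis matrix built from a positive triple of adapted frames, to an element of Lusztig's totally positive semigroup. Concretely I write $\rho(\gamma)$ in a basis constructed from the frames of $\xi_\rho(\gamma^-)$, $\xi_\rho(x)$, and $\xi_\rho(\gamma^+)$ and verify directly that every minor is positive by recognizing each as a monomial in positive Fock--Goncharov coordinates. For peripheral $\gamma$ the attracting and repelling fixed points on $\partial_\infty\pi_1(S_{g,m})$ coincide at a single point $\alpha$, so $\rho(\gamma)$ fixes only the single flag $\xi_\rho(\alpha)$ and is upper triangular in a basis adapted to that flag; the weak ordering $\lambda_1\geq\lambda_2\geq\lambda_3>0$ and the upper-triangular total positivity then follow by approximating $\gamma$ by non-peripheral conjugates, applying the previous case, and passing to the limit in which the strict inequalities may degenerate.

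The main obstacle is the total positivity assertion itself, as opposed to mere diagonalizability with positive spectrum: one must show that every minor of $\rho(\gamma)$ in a suitable generic basis is strictly positive, which does not follow from the eigenvalue analysis alone. Here I would lean on Fock--Goncharov's positivity theorem for the space of positive representations (\cite[Theorem 1.15]{FG06}) together with Lusztig's parametrization of the totally positive part of the real split group via Fock--Goncharov-type coordinates, which identifies the stabilizer-of-a-positive-configuration condition with membership in the totally positive semigroup.
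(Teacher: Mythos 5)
This is a theorem the paper quotes from \cite[Theorem 2.8]{FG07}; the paper itself gives no proof, so there is nothing to compare yours against line by line. Judged on its own, your sketch has the right general flavor for the non-peripheral case but contains a genuine error in the peripheral case, and several steps are asserted rather than proven.

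For the peripheral case you write that the attracting and repelling fixed points of $\gamma$ on $\partial_\infty\pi_1(S_{g,m})$ ``coincide at a single point,'' and conclude that $\rho(\gamma)$ fixes only a single flag. That is false when the boundary monodromy is loxodromic. As the paper itself explains when defining $\partial_\infty\pi_1(S_{g,m})$, if the auxiliary hyperbolic structure has geodesic (not cuspidal) boundary, the boundary at infinity is a Cantor set and $\gamma^+\neq\gamma^-$ are distinct endpoints; the two points only merge in the unipotent limit. So $\rho(\gamma)$ generically fixes two distinct flags in $\xi_\rho(\partial_\infty\pi_1)$, and one must argue separately why the eigenvalues can be repeated (giving $\geq$ rather than $>$) and why the resulting matrix is totally positive \emph{upper triangular} rather than totally positive; collapsing the two fixed points throws this structure away. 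The follow-up idea of ``approximating $\gamma$ by non-peripheral conjugates and passing to the limit'' is also not a well-posed argument: peripheral conjugacy classes are not limits of non-peripheral ones inside a fixed $\pi_1(S_{g,m})$, and there is no convergence of group elements to take a limit over.

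Two more places need substance. First, the jump from the magnitude inequalities $|\lambda_1|>|\lambda_2|>|\lambda_3|$ to the sign statement $\lambda_1>\lambda_2>\lambda_3>0$: you say the triple ratio and edge functions are ``Laurent monomials in the $\lambda_i$,'' but they are not functions of the eigenvalues alone — they also depend on the coordinates of $\xi_\rho(x)$ in your chosen basis, so you would need to exhibit an explicit identity and verify it actually forces positivity of the eigenvalues (for instance by combining invariants of the triples $(\xi_\rho(\gamma^-),\xi_\rho(x),\xi_\rho(\gamma^+))$ and $(\xi_\rho(\gamma^-),\xi_\rho(\gamma x),\xi_\rho(\gamma^+))$ so that the $x$-dependence cancels). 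Second, the total positivity assertion — that stabilizing a positive configuration with distinct positive eigenvalues forces membership in Lusztig's totally positive semigroup — is stated as a ``characterization'' you ``invoke,'' but no reference is given for exactly this statement, and it is in fact the heart of the FG07 argument rather than something that can be cited away. To make this a proof, you would need the explicit factorization of $\rho(\gamma)$ (in the frame built from $\xi_\rho(\gamma^-), \xi_\rho(x), \xi_\rho(\gamma^+)$) into Chevalley generators with parameters equal to positive Fock--Goncharov coordinates, which is precisely what FG07 carry out.
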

Note that the above loxodromic property is also proved by \cite{Lab06} for Anosov representations.

Following the above theorem, we can define $i$-th length for $i=1,2$.
\begin{defn}[$i$-th length]
Given any $\rm{PGL}(3,\mathbb{R})$-positive representation $\rho\in \rm{Pos}_3(S_{g,m})$, for $i=1,2$ and any $\gamma \in \pi_1(S_{g,m})$, we define the {\em $i$-th length} (or called {\em simple root length}) of $\gamma$: 
\[\ell_i(\gamma):=\ell_i^{\rho}(\gamma):=\log \frac{\lambda_i(\rho(\gamma))}{\lambda_{i+1}(\rho(\gamma))}.\]  

Then 
\[\ell(\gamma):=\ell^{\rho}(\gamma):=\ell_1^\rho(\gamma)+\ell_2^\rho(\gamma)\]
is the Hilbert length of $\gamma$ with respect to $\rho$.
\end{defn}

\begin{defn}
Given any $\rm{PGL}(3,\mathbb{R})$-positive representation $\rho \in \rm{Pos}_3(S_{g,m})$, let $\alpha_1,\cdots, \alpha_m$ be the oriented boundary components of the topological surface $S_{g,m}$ such that $S_{g,m}$ is on the left side of $\alpha_s$ for $s=1,\cdots,m$. Let 
\[\mathbf{L}:=\left(\ell_1(\rho(\alpha_1)),\cdots, \ell_1(\rho(\alpha_m)), \ell_2(\rho(\alpha_1)),\cdots, \ell_2(\rho(\alpha_m))\right).\]
We denote the elements in $\rm{Pos}_3(S_{g,m})$ with fixed boundary simple root lengths $\mathbf{L}$ by
$\rm{Pos}_3(S_{g,m})(\mathbf{L})$.

Let us denote $\rm{Pos}_3(S_{g,m})(\mathbf{0})$---the collection of positive representations with unipotent boundary monodromy by $\rm{Pos}_3^u(S_{g,m})$.

Let $\rm{Pos}^h_3(S_{g,m})$ be the collection of positive representations with loxodromic boundary monodromy. Then 
\[\rm{Pos}^h_3(S_{g,m})=\bigcup_{\mathbf{L} \in \mathbb{R}_{>0}^{2m}} \rm{Pos}_3(S_{g,m})(\mathbf{L}).\]

Let $\rm{Pos}_3'(S_{g,m})=\rm{Pos}_3^h(S_{g,m})\cup \rm{Pos}_3^u(S_{g,m})$.
\end{defn}

\begin{defn}[Canonical $\rho$-equivariant map]
\label{definition:cancho}
For any $\rm{PGL}(3,\mathbb{R})$-positive representation $\rho \in \operatorname{Pos}_3^h(S_{g,m})$ with loxodromic boundary monodromy, there is a \emph{canonical $\rho$-equivariant map} $\xi_\rho:\partial_\infty \pi_1(S_{g,m})\rightarrow \mathcal{B}$ such that for any peripheral $\delta \in \pi_1(S_{g,m})$, (by Theorem \ref{theorem:loxo}, $\rho(\delta)$ has eigenvectors $\delta_1,\delta_2,\delta_3$ and the corresponding eigenvalues $\lambda_1,\lambda_2,\lambda_3$ satisfy $\lambda_1>\lambda_2>\lambda_3>0$,) the eigenvectors $(\delta_3,\delta_2,\delta_1)$ ($(\delta_1,\delta_2,\delta_3)$ resp.) form a basis for the flag $\xi_\rho(\delta^-)$ ($\xi_\rho(\delta^+)$ resp.). 

For any $\rho \in \operatorname{Pos}_3^u(S_{g,m})$ with unipotent boundary monodromy, there is only one choice of $\xi_\rho$ (\cite[Theorem 1.14]{FG06}). We also call $\xi_\rho$ the \emph{canonical $\rho$-equivariant map}.
\end{defn}
Any other lift can be obtained by permuting the order of the basis $(\delta_3,\delta_2,\delta_1)$ for the flag $\xi(\delta^-)$ for each $\delta$ as above (see for example \cite[Section 10]{LM09}).

Similar to Theorem \ref{thm:11cor}, for $\rm{Pos}_3^u(S_{g,m})$, we have 
\begin{thm}\cite{Mar10}
\label{theorem:maru}
For the integer $g\geq 2$, the deformation space of (marked) cusped strictly convex $\mathbb{RP}^2$ structures on the surface $S_{g,m}$ is homeomorphic to $\rm{Pos}_3^u(S_{g,m})$, which is a cell of dimension $16g-16+6m$.  
\end{thm}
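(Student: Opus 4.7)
The plan is to establish the homeomorphism in two directions and then to compute the dimension via a coordinate chart, paralleling the closed-surface case of Theorem \ref{thm:11cor} while handling the cusp ends separately.

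First I would show that the holonomy $\rho$ of a marked cusped strictly convex $\mathbb{RP}^2$ structure $(h,\Omega/\Gamma)$ on $S_{g,m}$ lies in $\rm{Pos}_3^u(S_{g,m})$. For each puncture, the cusp condition together with strict convexity forces the peripheral monodromy $\rho(\gamma)$ to fix a single point of $\partial\Omega$ and the line tangent there, so $\rho(\gamma)$ is a parabolic element preserving a strictly convex domain; a standard argument on parabolic isometries of properly convex domains (the cusp neighbourhood has finite Hilbert area) then rules out non-regular unipotents and forces $\rho(\gamma)$ to be conjugate to the principal unipotent upper-triangular matrix. The developing map identifies $\partial_\infty \pi_1(S_{g,m})$ with $\partial\Omega$ through $\xi_\rho^{(1)}$ and with the associated tangent lines through $\xi_\rho^{(2)}$; strict convexity combined with transversality of these tangent lines delivers positivity on every cyclically ordered triple of flags in the sense of Definition \ref{defn:pos3}, so $\rho\in\rm{Pos}_3^u(S_{g,m})$.

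For the reverse direction, given $\rho\in\rm{Pos}_3^u(S_{g,m})$, I would construct the convex domain $\Omega_\rho$ as the interior of the convex hull of $\xi_\rho^{(1)}(\partial_\infty\pi_1(S_{g,m}))$ using the canonical $\rho$-equivariant map of Definition \ref{definition:cancho}. Positivity ensures that $\Omega_\rho$ is a properly convex set whose boundary is strictly convex along the image of $\xi_\rho^{(1)}$ with supporting lines given by $\xi_\rho^{(2)}$. The action of $\Gamma=\rho(\pi_1(S_{g,m}))$ on $\Omega_\rho$ is properly discontinuous, following from the positivity/Anosov properties of $\rho$ (cf.\ \cite{Lab06,FG06}), and near each puncture one can conjugate the principal unipotent monodromy into a normal form and identify a neighbourhood of the end with a standard projective cusp. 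The uniqueness of $\xi_\rho$ in Definition \ref{definition:cancho} then makes this construction inverse to the holonomy correspondence, up to marked equivalence.

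For the cell structure and dimension, I would parametrize $\rm{Pos}_3(S_{g,m})$ via Fock--Goncharov $\mathcal X$-coordinates on an ideal triangulation $\mathcal T$ of $S_{g,m}$ with vertices at the $m$ punctures. For $\rm{PGL}(3,\mathbb{R})$, each of the $4g-4+2m$ ideal triangles contributes one positive triple-ratio invariant and each of the $6g-6+3m$ edges contributes two positive edge invariants, yielding a positive chart on $\mathbb{R}_{>0}^{16g-16+8m}$. By \cite{FG06}, requiring that the monodromy at a given puncture be regular unipotent is equivalent to demanding that both simple-root eigenvalue ratios there equal $1$, which is a pair of logarithmic-linear equations in the coordinates incident to that puncture. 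Imposing these $2m$ independent equations carves out a subset homeomorphic to $\mathbb{R}_{>0}^{16g-16+6m}\cong\mathbb{R}^{16g-16+6m}$, giving both the cell structure and the dimension.

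The main obstacle I would expect is in the reverse construction: one must promote the purely boundary datum $\xi_\rho$ into a genuine properly discontinuous action on a convex $\mathbb{RP}^2$ domain whose quotient is diffeomorphic to $S_{g,m}$ with the correct cusp at each puncture, uniformly in $\rho$. Checking that the ends are honest projective cusps (rather than exotic parabolic degenerations) and that the marking is recovered correctly is the technical heart of Marquis's argument and is where most of the work lies.
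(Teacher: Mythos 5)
Note first that the paper does not prove this theorem: it is a direct citation to Marquis \cite{Mar10}, so the comparison is against Marquis's argument rather than an in-paper proof. Your outline is sound and follows the same overall architecture---set up the holonomy/structure correspondence in both directions, control the cusp ends, compute the dimension---but your cell-structure step takes a genuinely different route. You parametrize $\rm{Pos}_3(S_{g,m})$ by Fock--Goncharov coordinates on an ideal triangulation with vertices at the punctures ($16g-16+8m$ positive parameters) and cut down by $2m$ log-linear conditions forcing both simple-root eigenvalue ratios to vanish at each puncture; that count is correct and, within the positive-representation framework, is the cleanest path to $16g-16+6m$. Marquis instead extends Goldman's explicit geometric coordinate system (pair-of-pants decompositions with attached triangle and gluing invariants) to the finite-volume setting, so your approach is an alternative rather than a reproduction: you gain a shorter path to the dimension, he gains more hands-on projective control of the ends along the way. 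One local caution on your forward direction: strict convexity of $\partial\Omega$ does not by itself exclude the non-regular unipotent, whose fixed projective line may meet $\overline{\Omega}$ only at the cusp point without producing a segment in $\partial\Omega$; it is the finite-area condition at the cusp, combined with strict convexity, that rules out both the quasi-hyperbolic and non-regular unipotent cases. You do invoke finite area, but the phrasing reads as if convexity were already decisive. Apart from that, your identification of the reverse construction---promoting $\xi_\rho$ to a properly discontinuous action whose quotient carries honest projective cusps---as the technical heart matches exactly where Marquis concentrates his effort.
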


\begin{remark}
In \cite{Mar12}, Marquis also described the one-to-one correspondence between any $\rho \in \rm{Pos}_3(S_{g,m})\backslash \rm{Pos}_3^u(S_{g,m})$ and the minimal $\rho$-invariant convex $\mathbb{RP}^2$ domain up to equivalence. 
\end{remark}

\begin{remark}
\label{remark:double}
In \cite[Section 9]{LM09}, using the same definition as Definition \ref{definition:Hit}, the notion of the Hitchin representations for closed surfaces are generalized to the representations with loxodromic boundary monodromy. We denote the space of $\operatorname{PGL}(3,\mathbb{R})$-Hitchin representations up to conjugation by $\rm{Hit}_3(S_{g,m})$. By \cite[Theorem 9.1]{LM09}, $\rm{Hit}_3(S_{g,m})\subset \rm{Pos}_3^h(S_{g,m})$. By the gluing process in \cite[Definition 9.2.2.3]{LM09} which satisfies the gluing condition in \cite[Definition 7.2]{FG06}, any $\rho \in \rm{Pos}_3^h(S_{g,m})$ with $2g-2+m\geq 1$ and $m\geq 1$ is glued into a positive representation $d\rho$ for $S_{2g-1+m,0}$ in a canonical way. By \cite[Theorem 1.15]{FG06} $\rm{Hit}_3(S_{2g-1+m,0})= \rm{Pos}_3(S_{2g-1+m,0})$, we have $d\rho$ is a Hitchin representation. Thus $d\rho$ is deformed from a $3$-Fuchsian representation, and the restriction to $S_{g,m}$ induces a deformation path from the $3$-Fuchsian representation for $S_{g,m}$ to $\rho$. Thus $\rho\in\rm{Hit}_3(S_{g,m})$. Hence $\rm{Pos}_3^h(S_{g,m})\subset \rm{Hit}_3(S_{g,m})$. Hence $\rm{Hit}_3(S_{g,m})= \rm{Pos}_3^h(S_{g,m})$.
\end{remark}

\subsection{Projective invariants}
\label{subsection:proinv}
\begin{defn}[triple ratios]
\label{definition:tripleratio}
Consider the triple of flags $(F,G,H)$ in generic position, with bases 
\[
(f_1,f_2,f_3),\;\;(g_1,g_2,g_3),\;\;(h_1,h_2,h_3).
\] 
Then the \emph{triple ratio} $T(F,G,H)$ is defined by:
\begin{align*}
T(F,G,H):= 
\frac
{\Delta\left(f^{2} \wedge g^{1}\right) \Delta\left( g^{2} \wedge h^{1}\right)  \Delta\left(h^{2} \wedge  f^{1}\right)} 
{\Delta\left(f^{2}  \wedge h^{1}\right) \Delta\left(g^{2} \wedge f^{1} \right) \Delta\left(h^{2} \wedge g^{1}\right)}
\end{align*}
where $w^{i}:=w_1\wedge\cdots\wedge w_i$, which is $\rm{PGL}(3,\mathbb{R})$ invariant. 
Notice the symmetry \[T(F,G,H)=T(G,H,F)=T(H,F,G).\]
\end{defn}
Check Figure~\ref{figure:triple} for a geometric description of the triple ratio.

\begin{figure}[ht]
\centering
\includegraphics[scale=0.35]{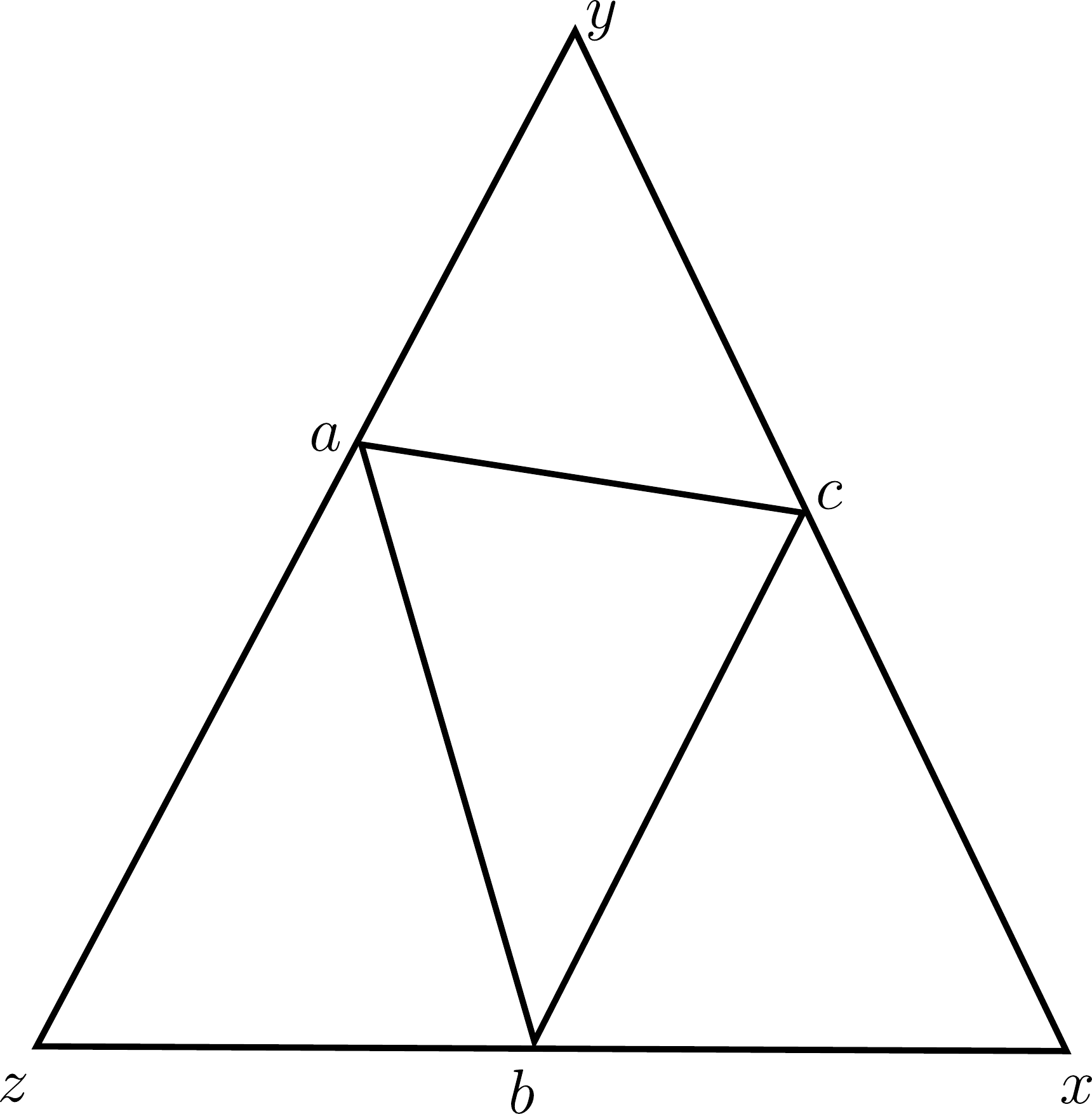}
\small
\caption{The flags are $F=(a,\overline{yz})$, $G=(b,\overline{zx})$, $H=(c,\overline{xy})$. Let $|\cdot|$ be the Euclidean norm. Then the triple ratio $T(F,G,H)=\frac{|ya|}{|az|}\frac{|zb|}{|bx|}\frac{|xc|}{|cy|}$. By Ceva theorem, $T(F,G,H)=1$ if and only if $\overline{ax}$, $\overline{by}$ and $\overline{cz}$ are colinear. }
\label{figure:triple}
\end{figure}

\begin{defn}[Edge functions]
\label{definition:edgefun}
Let $(X,Y,Z,W)$ be the quadruple of flags in generic position, choose their bases 
\[(x_1,x_2,x_3),\;\;(y_1,y_2,y_3),\;\;(z_1,z_2,z_3),\;\;(w_1,w_2,w_3).\]
For $i=1,2$, the \emph{edge functions} are defined to be
\begin{eqnarray*}
&&D_1(X,Y,Z,W):= \frac{\Delta\left(x^{2} \wedge z^{1} \right)}{\Delta\left(x^{2}  \wedge w^{1}\right)}\cdot \frac{ \Delta\left(x^{1} \wedge y^{1} \wedge w^1\right)}{\Delta\left(x^{1}  \wedge y^{1}\wedge z^1\right)}
\end{eqnarray*}
\begin{eqnarray*}
&&D_2(X,Y,Z,W):= \frac{\Delta\left(y^{2} \wedge w^{1} \right)}{\Delta\left(y^{2}  \wedge z^{1}\right)}\cdot \frac{ \Delta\left(x^{1} \wedge y^{1} \wedge z^1\right)}{\Delta\left(x^{1}  \wedge y^{1}\wedge w^1\right)}
\end{eqnarray*}
which are $\rm{PGL}(3,\mathbb{R})$ invariants. 
Notice the symmetry
\[D_1(X,Y,Z,W)=D_2(Y,X,W,Z).\]
\end{defn}

\begin{figure}
\centering
\includegraphics[scale=0.35]{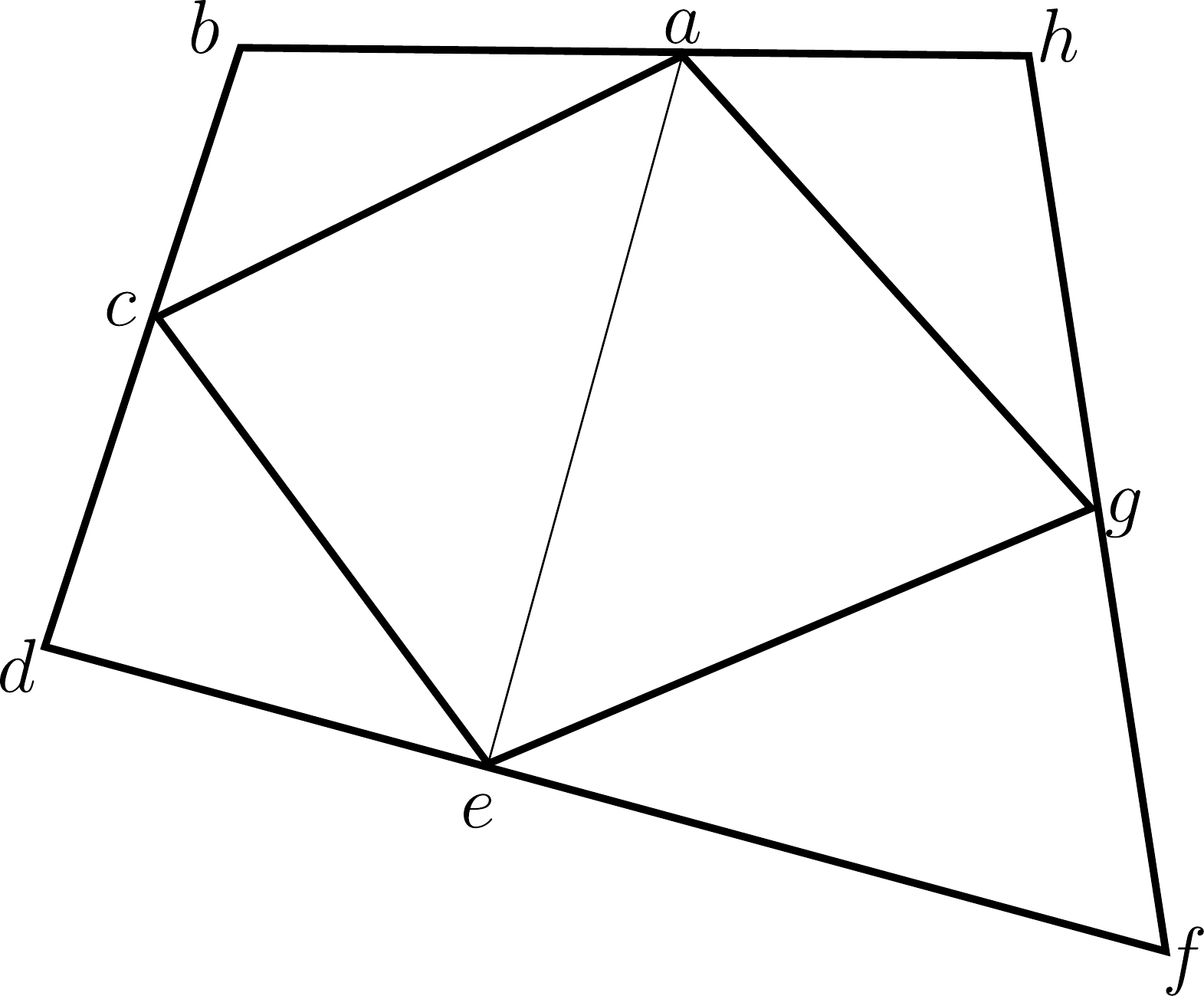}
\small
\caption{The flags are $X=(a,\overline{bh})$, $W=(c,\overline{bd})$, $Y=(e,\overline{df})$, $Z=(g,\overline{fh})$. Up to $\rm{PGL}(3,\mathbb{R})$, the position of $(X,W,Y)$ is decided by the triple ratio $T(X,W,Y)$. The convention for cross ratio in $\mathbb{RP}^1$ is $CR(\alpha,\beta,\gamma,\delta):=\frac{\alpha-\gamma}{\alpha-\delta}\cdot\frac{\beta-\delta}{\beta-\gamma}$. We have 
$D_1(X,Y,Z,W)=CR(\overline{ab},\overline{ae},\overline{ag},\overline{ac})$
deciding the line $\overline{ag}$ and 
$D_2(X,Y,Z,W)=CR(\overline{ef},\overline{ea},\overline{ec},\overline{eg})$
deciding the line $\overline{eg}$, which fix the point $G$. In the end, the line $\overline{hf}$ is decided by the triple ratio $T(X,Y,Z)$. }
\label{figure:edge}
\end{figure}

As shown in Figure~\ref{figure:edge}, the configuration space $\rm{Conf}^+_4$ can be parameterized by the positive numbers 
\[(A,B,C,D):=\left(T(X,W,Y),\;-D_1(X,Y,Z,W),\;-D_2(X,Y,Z,W),\;T(X,Y,Z)\right).\] 
This parametrization depends on the triangulation of the polygon $(a,c,e,g)$. We can choose the triangulation $\{\overline{cg}\}$ instead of $\{\overline{ae}\}$. Then the parameters are changed into
\[(A',B',C',D'):=\left(-D_2(Z,W,Y,X), \;T(Z,X,W),\;T(Z,W,Y),\;-D_1(Z,W,Y,X)\right).\] 
Then by \cite[Section 2]{FG07}, we have 
\[(A',B',C',D')=\left(\frac{1+C}{AC(1+B)},D\frac{1+C+CA+CAB}{1+B+BD+BDC}, A\frac{1+B+BD+BDC}{1+C+CA+CAB}, \frac{1+B}{DB(1+C)} \right).\] 
The space $\rm{Conf}^+_d$ can be understood as a map from a cyclically ordered subset $\mathcal{Q}$ of $S^1$ to $\mathcal{B}$. There is the $d$-gon $D_d$ with $\mathcal{Q}$ as vertices and $S^1$ as the union of edges.
The triangulation above is equivalent to the triangulation of the $d$-gon $D_d$.

\begin{defn}[Parameters]
\label{definition:para}
For the integer $d\geq 3$, let $(x_1,\cdots,x_d)$ be the cyclically ordered set $\mathcal{Q}$. For any anticlockwise ordered triangle $\mathbf{\Delta}:=(x_i,x_j,x_k)$, we define 
\[T(\mathbf{\Delta}):=T(\xi(x_i),\xi(x_j),\xi(x_k)).\] 
For any edge $\mathbf{e}$ with two adjacent anticlockwise ordered triangles $(x_i,x_j,x_k)$ and $(x_i,x_l,x_j)$, we choose an orientation $\overrightarrow{\mathbf{e}}=(x_i,x_j)$, for $i=1,2$, we define
\[D_i(\overrightarrow{\mathbf{e}}):=D_i(\xi(x_i),\xi(x_j),\xi(x_k),\xi(x_l)).\] 

\end{defn}

We can use the above parameters to parameterize $\rm{Conf}^+_d$.
\begin{prop}\cite{FG06}
\label{prop:confpara}
For $d\geq 3$, given a triangulation $\mathcal{T}$ of the $d$-gon $D_d$, let $\Theta$ be the collection of anticlockwise ordered triangles of $\mathcal{T}$. Let $E$ be the collection of edges of $\mathcal{T}$. There exists a real analytic diffeomorphism $\theta:\rm{Conf}^+_d \rightarrow \mathbb{R}_{>0}^{3d-8}$
\[\xi \rightarrow \left(\left(T(\mathbf{\Delta}) \right)_{\Delta\in \Theta},\left(-D_1(\overrightarrow{\mathbf{e}}), -D_2(\overrightarrow{\mathbf{e}}) \right)_{\mathbf{e}\in E}\right).   \]
\end{prop}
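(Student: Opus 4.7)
The plan is to establish that $\theta$ is a real analytic diffeomorphism by combining $\mathrm{PGL}(3,\mathbb{R})$-invariance and a dimension count with an inductive reconstruction procedure that simultaneously inverts $\theta$ and propagates positivity. First I would check that $T(\mathbf{\Delta})$ and $D_i(\overrightarrow{\mathbf{e}})$ are well-defined projective invariants: in each formula, rescaling any basis vector of any flag introduces the same total power of the scalar in numerator and denominator, and acting by $g \in \mathrm{PGL}(3,\mathbb{R})$ multiplies every $\Delta(\cdot)$ by a common power of $\det(g)$ that again cancels. Real analyticity on the open locus $\mathrm{Conf}_d^+$ is then immediate. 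For the dimension count, any triangulation of $D_d$ has $d-2$ triangles and $d-3$ interior edges, yielding $(d-2)+2(d-3)=3d-8$ coordinates; the space of $d$-tuples of flags in generic position has dimension $3d$, and $\mathrm{PGL}(3,\mathbb{R})$ acts freely on configurations of $d\geq 3$ generic flags, so $\mathrm{Conf}_d^+$ also has dimension $3d-8$. Positivity of $T$ and negativity of $D_1$, $D_2$ on $\mathrm{Conf}_d^+$ is read off from the geometric pictures in Figures \ref{figure:triple} and \ref{figure:edge}, combined with the strictly convex curve characterization of positivity in Definition \ref{defn:pos3}.

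The core of the argument is an inductive reconstruction along the triangulation $\mathcal{T}$. In the base case $d=3$ one uses the free $\mathrm{PGL}(3,\mathbb{R})$-action to fix standard basis representatives of $F_1$ and $F_2$, so that $F_3$ ranges over a one-parameter family on which $T$ is a monotone bijection onto $\mathbb{R}_{>0}$; this identifies $\mathrm{Conf}_3^+$ with $\mathbb{R}_{>0}$. For the inductive step, pick an ear $\tau$ of $\mathcal{T}$, that is, a triangle attached to the remaining sub-$(d{-}1)$-gon along a single interior edge $\overrightarrow{\mathbf{e}}$ (such an ear always exists). By the inductive hypothesis the restricted coordinates reconstruct the $d-1$ flags of the sub-polygon uniquely up to $\mathrm{PGL}(3,\mathbb{R})$. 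The three remaining coordinates $T(\tau)$, $-D_1(\overrightarrow{\mathbf{e}})$, $-D_2(\overrightarrow{\mathbf{e}})$ then pin down the new flag: the two cross ratios coming from $-D_1$ and $-D_2$ fix two projective lines through the endpoints of $\overrightarrow{\mathbf{e}}$, whose intersection is the point $F^{(1)}$ of the new flag, and $T(\tau)$ determines the line $F^{(2)}$ through that point. This is exactly the geometric recipe illustrated in Figure \ref{figure:edge}, and it yields explicit rational formulas for the inverse map, so the inverse is automatically real analytic wherever it is defined.

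The main obstacle, and the step I expect to be most delicate, is verifying that the flag reconstructed from arbitrary positive values of the coordinates is automatically in generic position with all previously placed flags and that the resulting $d$-tuple is still positive in the sense of Definition \ref{defn:pos3}. The cleanest way to handle this is to carry the strictly convex tangent curve along with the induction: given such a curve supporting the sub-$(d{-}1)$-gon's configuration, the positivity of $-D_1$ and $-D_2$ forces the new point $F^{(1)}$ to land on the correct arc of the curve between its two neighbors, and the positivity of $T(\tau)$ controls the tangent line $F^{(2)}$ so that it continues to support the extended convex curve. Together with the explicit reconstruction formulas and the dimension matching from the first paragraph, this shows $\theta$ is bijective with real analytic inverse, completing the proof.
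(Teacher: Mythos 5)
The paper does not prove this proposition; it is simply cited from Fock--Goncharov [FG06], so there is no in-paper proof to compare against. Your proposal is essentially a reconstruction of the standard Fock--Goncharov ``amalgamation'' argument (cf.\ [FG06, Sections 9--10] and the picture in Figure~\ref{figure:edge} of this paper): set up the dimension count $(d-2)+2(d-3)=3d-8=3d-\dim\rm{PGL}(3,\mathbb{R})$, then induct on $d$ by peeling off an ear of the triangulation, with $-D_1$ and $-D_2$ determining the new point $F^{(1)}$ and the triple ratio determining the new line $F^{(2)}$. Your identification of $\rm{Conf}_3^+\cong\mathbb{R}_{>0}$ via the triple ratio and the claim that $\rm{PGL}(3,\mathbb{R})$ acts freely on generic triples of flags are both correct, and the reconstruction recipe you describe is exactly the one summarized in the caption of Figure~\ref{figure:edge}.

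The one place where your argument is still a sketch rather than a proof is the step you yourself flag as delicate: showing that for \emph{every} choice of positive parameter values the reconstructed flag is in generic position with all the others and that the resulting $d$-tuple is positive in the sense of Definition~\ref{defn:pos3} (equivalently, that it is supported by a strictly convex curve). Saying that ``positivity of $-D_1$ and $-D_2$ forces the new point to land on the correct arc'' and ``positivity of $T(\tau)$ controls the tangent line so that it continues to support the extended convex curve'' states the desired conclusion but does not derive it; one needs an explicit elementary argument (or an appeal to the total-positivity machinery in [FG06]) that the new point lies strictly inside the triangle cut off by $\overrightarrow{\mathbf{e}}$ and the two supporting tangent lines, and that the new tangent line separates the new point from the rest of the configuration. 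Without that step, injectivity and surjectivity of $\theta$ onto all of $\mathbb{R}_{>0}^{3d-8}$ are not actually established. As a high-level outline, though, your route is the correct one and is the same route taken in [FG06].
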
 
Let us recall the ideal triangulation. For more details, check \cite{Thu79,CB88,PH92,Bon01}.
\begin{defn}{\sc[Ideal triangulation]}
We equip $S_{g,m}$ with a hyperbolic metric $\rho_h
$.
Choose finitely many disjoint simple closed geodesics $\mathcal{P}$ (can be empty). The {\em ideal triangulation $\mathcal{T}$ of $S_{g,m}$ (subordinate to $\mathcal{P}$)} is a simple maximal filling geodesic lamination of $(S_{g,m},\rho_h)$ containing $\mathcal{P}$ with finitely many leaves.
\end{defn}

Let $\mathcal{X}_3(S_{g,m})$ be the space of all the pairs $(\rho,\xi_\rho)$, where $\rho \in \rm{Pos}_3(S_{g,m})$ and $\xi_\rho$ is a $\rho$-equivariant map. 
Considering all the lifts of the ideal triangulation $\mathcal{T}$ into the universal cover, the vertices of all the lifts are contained in $\partial_\infty \pi_1(S_{g,m})$. Using the parameters derived from the images of these vertices under $\xi_\rho$ as Proposition \ref{prop:confpara}, Fock and Goncharov \cite[Theorem 9.1]{FG06} provided a positive atlas for $\mathcal{X}_3(S_{g,m})$. 

\begin{remark}
For $m=0$ when the ideal triangulation $\mathcal{T}$ has non-empty $\mathcal{P}$, Goldman \cite{G90} parameterized the $\rm{PGL}(3,\mathbb{R})$-Hitchin component $\rm{Hit}_3(S_{g,0})$. Then Kim \cite{Kim99} provided a global Darboux coordinate, where some parameters are modified in Choi--Jung--Kim \cite{CJK19}. Using Fock--Goncharov's parameters \cite{FG06} in Definition \ref{definition:para}, Bonahon and Dreyer \cite{BD14} parameterized $\rm{Hit}_3(S_{g,0})$ with respect to an ideal triangulation $\mathcal{T}$ on the closed surface $S_{g,0}$ and a choice of transverse arcs to $\mathcal{P}$. Based on the Bonahon--Dreyer's parametrization, Sun--Wienhard--Zhang \cite{SWZ17,SZ17} provided a global Darboux coordinate with respect to an ideal triangulation $\mathcal{T}$ subordinate to a pants decomposition $\mathcal{P}$ and a choice of transverse arcs to $\mathcal{P}$. Later on, we will use the last mentioned global Darboux coordinate system for our computation. 
\end{remark}

\section{Bounded moduli spaces}
In this section, we introduce many subspaces of the moduli space of unmarked convex $\mathbb{RP}^2$ structures on $S_{g,m}$ with some natural boundedness conditions. Many of them are inspired by the work of Benoist \cite{Ben03} and Colbois--Vernicos--Verovic \cite{CVV08}. Each one of them is not compact, because it contains the entire $3$-Fuchsian locus that is isomorphic to the moduli space of Riemann surfaces. We mainly interested in the area bounded subset and the projective invariants bounded subset.
\subsection{Area boundedness and projective invariants boundedness}
\label{subsection:abandb}
Given any $\rm{PGL}(3,\mathbb{R})$-positive representation $\rho$ with loxodromic boundary monodromy for $S_{g,m}$ with $m\geq 1$, let $\Omega\subset \mathbb{RP}^2$ be the minimal $\rho(\pi_1(S_{g,m}))$-invariant convex domain in $\mathbb{RP}^2$ as in Figure \ref{figure:cdd}. 
\begin{figure}
\centering
\includegraphics[scale=0.35]{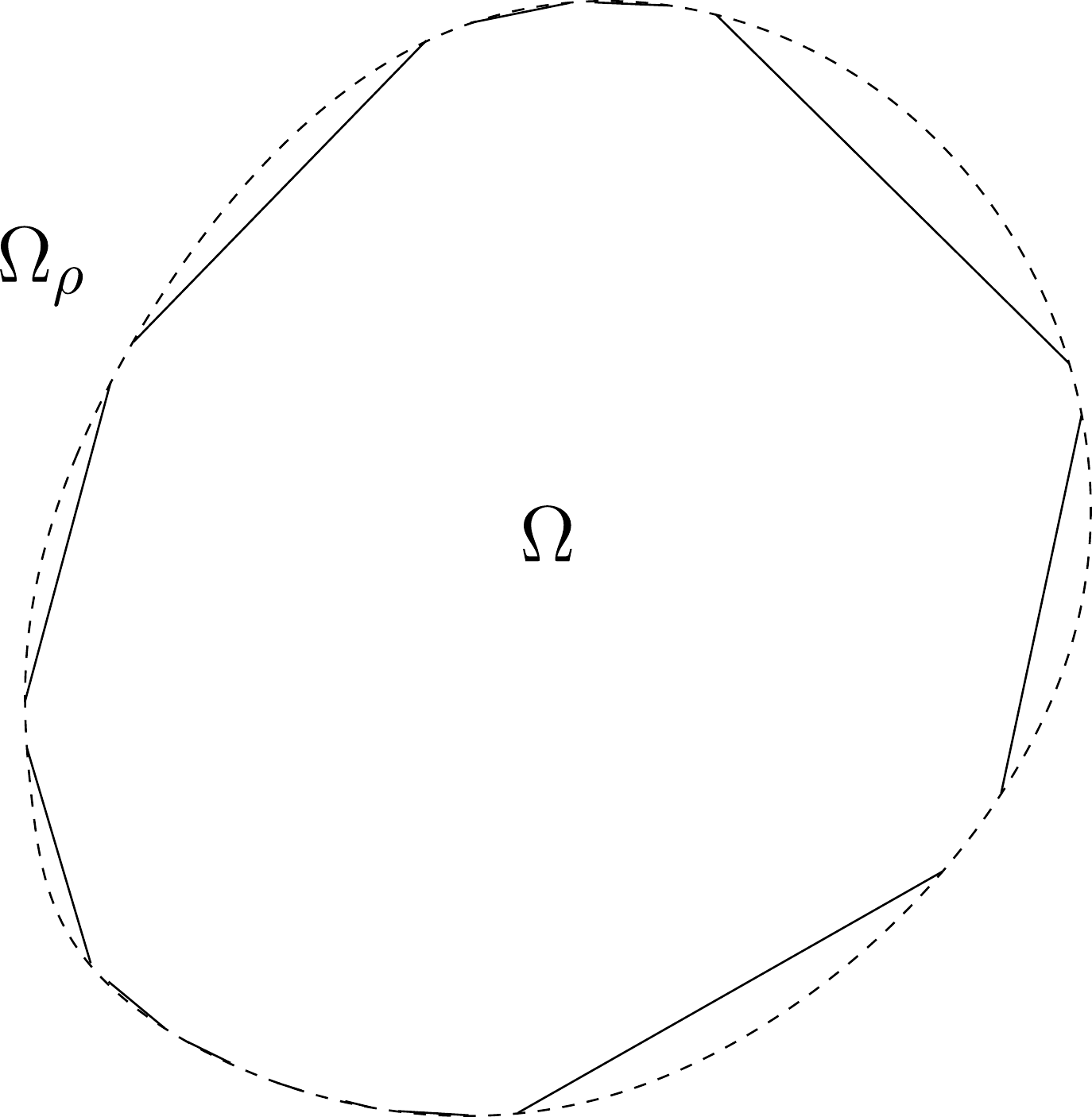}
\small
\caption{The $d\rho(\pi_1(S_{g,m}))$-invariant convex domain $\Omega_\rho$ is bounded by the dotted curve. The minimal $\rho(\pi_1(S_{g,m}))$-invariant convex domain $\Omega$ is obtained by removing infinite many hemispheres illustrated by the straight line segments from $\Omega_\rho$.}
\label{figure:cdd}
\end{figure}
By \cite{Mar12}, the area of $S_{g,m}$ with respect to the Hilbert metric on $\Omega$ is infinite. But $\Omega$ is not the natural one to use. Indeed, when $\rho$ is $3$-Fuchsian, the minimal $\rho(\pi_1(S_{g,m}))$-invariant convex domain $\Omega$ is the universal cover $ \widetilde{S_{g,m}}$ with respect to $\rho$ (considered as a hyperbolic metric), then the Hilbert metric on $\Omega$ is not the hyperbolic metric on the surface with geodesic boundary with respect to the Klein model. For any $\rho \in \rm{Pos}_3^h(S_{g,m})$, let us consider the unique representation $d\rho$ introduced in \cite[Definition 9.2.2.3]{LM09} by doubling the surface $S_{g,m}$, then the $d\rho(\pi_1(S_{g,m}))$-invariant convex domain, denoted by $\Omega_\rho$, is unique. When $\rho$ is $3$-Fuchsian, the convex domain $\Omega_\rho$ a disk up to projective transformations, thus the Hilbert metric on $\Omega_\rho$ is indeed the hyperbolic metric on the surface $S_{g,m}$ with respect to the Klein model. As a subsurface of the doubled surface, the area of $S_{g,m}$ with respect to $\Omega_\rho$ is finite.

\begin{defn}[Canonical area]
\label{definition:carea}
For any $\rho \in \rm{Pos}_3^h(S_{g,m})$ with loxodromic boundary monodromy, let us consider its double $d \rho \in \rm{Pos}_3(S_{2g-1+m,0})$ by \cite[Definition 9.2.2.3]{LM09}, then we define the {\em canonical convex domain} $\Omega_{\rho}$ to be the unique $d\rho$-invariant strictly convex domain.
 
For any $\rho \in \rm{Pos}_3^u(S_{g,m})$ with unipotent boundary monodromy, we define the {\em canonical convex domain} $\Omega_{\rho}$ to be the unique $\rho(\pi_1(S_{g,m}))$-invariant strictly convex domain.  
 
For $\rho\in \rm{Pos}_3^h(S_{g,m})\cup \rm{Pos}_3^u(S_{g,m})=\rm{Pos}_3'(S_{g,m})$, we define the {\em canonical area} of $(S_{g,m},\rho)$ to be the area of $S_{g,m}$ with respect to the canonical convex domain $\Omega_\rho$.
\end{defn}


For all the ideal triangulation, the Thurston's shearing coordinates for the Teichm\"uller space are not bounded within an interval. For $\rho\in \rm{Pos}_3'(S_{g,m})$, we will show the uniformly boundedness of some other projective invariants for a subset of $\rm{Pos}_3'(S_{g,m})$ where the canonical areas are uniformly bounded. 

\begin{defn}
For any $\rho\in \rm{Pos}_3'(S_{g,m})$, let us consider the images of the canonical $\rho$-equivariant map $\xi_\rho$ in order to define the triple ratios and edge functions as in Definitions \ref{definition:tripleratio}, \ref{definition:edgefun}. Given any ideal triangulation $\mathcal{T}$, for any lift $\mathbf{\widetilde{\Delta}}$ of the ideal triangle $\mathbf{\Delta}$, let 
\[T(\mathbf{\Delta})(\rho):=T(\mathbf{\Delta}):= T(\mathbf{\widetilde{\Delta}}),\] 
For any lift $\widetilde{\overrightarrow{\mathbf{e}}}$ of the ideal edge $\overrightarrow{\mathbf{e}}$, let
\[D_i(\overrightarrow{\mathbf{e}})(\rho):=D_i(\overrightarrow{\mathbf{e}}):= D_i(\widetilde{\overrightarrow{\mathbf{e}}}).\] 
\end{defn} 
\textbf{Triangle invariant boundedness}

The following is basically a consequence of \cite[Proposition 0.3]{AC18}.
\begin{prop}
\label{proposition:trib}
For any $\rho\in \rm{Pos}_3'(S_{g,m})$ such that the canonical area of $(S_{g,m},\rho)$ is bounded above by a positive constant $t$, for any anticlockwise ordered ideal triangle $\mathbf{\Delta}$ in any ideal triangulation $\mathcal{T}$ of $S_{g,m}$, there exists a constant $T(t)$ which is a polynomial of $t$ and does not depend on $\rho$ such that 
\[\left|\log T(\mathbf{\Delta})(\rho)\right|\leq T(t). \]
 
\end{prop}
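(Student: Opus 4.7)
The plan is to reduce the claim to a statement about a single ideal triangle inside a strictly convex domain of $\mathbb{RP}^2$, and then appeal to the area lower bound of \cite[Proposition 0.3]{AC18}. First, I would lift the anticlockwise ordered ideal triangle $\mathbf{\Delta}$ of $\mathcal{T}$ to an ideal triangle $\widetilde{\mathbf{\Delta}}$ sitting inside the canonical convex domain $\Omega_\rho$ of Definition \ref{definition:carea}, whose three vertices lie on $\partial\Omega_\rho$. By Definition \ref{definition:cancho}, the canonical $\rho$-equivariant map $\xi_\rho$ sends these vertices to a triple of flags $(F,G,H)$ in generic position whose triple ratio is precisely $T(\mathbf{\Delta})(\rho)$. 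Since the $\rm{PGL}(3,\mathbb{R})$-action preserves the canonical area and $\mathcal{T}$ decomposes $S_{g,m}$ into finitely many pairwise disjoint ideal triangles, the canonical area $\rm{Vol}_{\Omega_\rho}(\widetilde{\mathbf{\Delta}})$ is bounded above by the total canonical area of $(S_{g,m},\rho)$, hence by $t$.

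Next I would invoke \cite[Proposition 0.3]{AC18}, which supplies an explicit lower bound for the Hilbert area of an ideal triangle in an arbitrary strictly convex properly convex domain of $\mathbb{RP}^2$ as a function of $|\log T(F,G,H)|$. Since that lower bound tends to infinity at least like some positive power of $|\log T(F,G,H)|$, uniformly in the ambient convex domain, inverting it and combining with $\rm{Vol}_{\Omega_\rho}(\widetilde{\mathbf{\Delta}}) \leq t$ yields the desired polynomial bound $|\log T(\mathbf{\Delta})(\rho)| \leq T(t)$, with $T(t)$ depending only on $t$ and not on $\mathcal{T}$, $\mathbf{\Delta}$, or $\rho$.

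The main obstacle will be verifying uniformity: one has to confirm that the area-versus-triple-ratio lower bound of \cite{AC18} does not deteriorate as $\rho$ approaches the boundary of $\rm{Pos}_3'(S_{g,m})$ or as the shape of $\Omega_\rho$ degenerates, and that the bound holds with the same polynomial across the whole space. A secondary check, needed when $m \geq 1$, is that the triple ratio $T(\mathbf{\Delta})(\rho)$ computed via $\xi_\rho$ coincides with the triple ratio of $\widetilde{\mathbf{\Delta}}$ viewed inside the larger strictly convex domain $\Omega_\rho$ of the doubled representation $d\rho$ from Remark \ref{remark:double}; this should be automatic because the canonical flag map of $d\rho$ restricts to $\xi_\rho$ on the attracting/repelling fixed points of elements of $\pi_1(S_{g,m}) \subset \pi_1(S_{2g-1+m,0})$, so the flags entering the triple ratio are unchanged by the doubling construction.
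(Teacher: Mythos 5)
Your proposal follows the same route as the paper: lift the triangle to $\Omega_\rho$, get the triangle's area bounded by $t$, and invoke \cite[Proposition 0.3]{AC18} for a lower bound of the area in terms of $|\log T(\mathbf{\Delta})(\rho)|$. The observation about $m\geq 1$ and the doubled representation is a good catch on a point the paper leaves implicit.

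There is, however, one genuine gap. The bound in \cite[Proposition 0.3]{AC18} is for the \emph{$p$-area} (Holmes--Thompson type measure) of the ideal triangle, namely
\[
\mathrm{parea}_{\Omega_\rho}(\widetilde{\mathbf{\Delta}}) \geq \frac{\pi^2 + \bigl(\log T(\mathbf{\Delta})(\rho)\bigr)^2}{8},
\]
whereas the hypothesis of the proposition bounds the \emph{canonical area}, which Definition \ref{definition:carea} defines via the Busemann measure. These are different Finsler volumes, so the inequality $\rm{Vol}_{\Omega_\rho}(\widetilde{\mathbf{\Delta}}) \leq t$ cannot be combined directly with the $p$-area lower bound. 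The bridge is a uniform two-sided comparison between these two proper densities, which is exactly what Benz\'ecri's co-compactness theorem \cite{B60} supplies: a constant $C$ independent of the domain with $\mathrm{parea}_{\Omega_\rho}(\widetilde{\mathbf{\Delta}}) \leq C\cdot \mathrm{area}_{\Omega_\rho}(\widetilde{\mathbf{\Delta}})$. This is precisely the uniformity issue you sensed (``does not deteriorate as the shape of $\Omega_\rho$ degenerates''), but it sits in the comparison between the two area functionals, not in the \cite{AC18} inequality itself, which is exact and universal. Once Benz\'ecri is inserted, your ``invert the bound'' step produces exactly the polynomial $T(t)=8Ct-\pi^2+1$ of the paper, so the argument would then be complete.
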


\begin{proof}
By \cite[Proposition 0.3]{AC18}, we have the $p$-area
\[parea_{\Omega_{\rho}}(\mathbf{\Delta})\geq \frac{\pi^2+(\log T(\mathbf{\Delta})(\rho))^2}{8}. \]
By \cite{B60}, the area is comparable to $p$-area. Thus there is a universal constant $C$ which does not depend on $\rho$ and $\mathbf{\Delta}$ such that
\[parea_{\Omega_{\rho}}(\mathbf{\Delta})\leq C\cdot  area_{\Omega_{\rho}}(\mathbf{\Delta}).\]
Hence
\[\left|\log T(\mathbf{\Delta})(\rho)\right|\leq \sqrt{8C \cdot area_{\Omega_{\rho}}(\mathbf{\Delta})-\pi^2}\leq \sqrt{8C t-\pi^2}\leq 8C t-\pi^2+1\]
for any anticlockwise ordered ideal triangle $\mathbf{\Delta}$ in any ideal triangulation $\mathcal{T}$. We take $T(t):=8C t-\pi^2+1$.
\end{proof}
\begin{remark}
For any $n$ in general and a given $\rho \in  \rm{Pos}_n(S_{g,m})$, the boundedness of triple ratios for any ideal triangle in any ideal triangulation is proved in \cite[Theorem 3.4]{HS19}.
\end{remark}
\textbf{Bulging invariant boundedness}

Bulging deformation was introduced by Goldman in \cite{G13}. It corresponds to deform linearly the difference of the log of two edge functions along one edge. The following is essentially a consequence of \cite[Proposition 4.2]{Ki18}.
\begin{prop}
\label{proposition:bulb}
For any $\rho\in\rm{Pos}_3^h(S_{g,m})$ such that the canonical area of $(S_{g,m},\rho)$ is bounded above by a positive constant $t$, for any ideal quadrilateral embedded in a pair of pants and its oriented diagonal ideal edge $\overrightarrow{\mathbf{e}}$ in any ideal triangulation $\mathcal{T}$ of $S_{g,m}$, there exists a constant $D(t)$ which does not depend on $\rho$ such that 
\[\left|\log D_1(\overrightarrow{\mathbf{e}})(\rho)-\log D_2(\overrightarrow{\mathbf{e}})(\rho)\right|\leq D(t). \]
\end{prop}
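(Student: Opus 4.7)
The plan is to mirror the proof of Proposition \ref{proposition:trib}, replacing the triangle area lower bound \cite[Proposition 0.3]{AC18} with the bulging area lower bound \cite[Proposition 4.2]{Ki18}. Conceptually, a bulging deformation along $\overrightarrow{\mathbf{e}}$ enlarges the ideal quadrilateral in a suitable projective area, so a global area bound on $(S_{g,m},\rho)$ should force $|\log D_1(\overrightarrow{\mathbf{e}}) - \log D_2(\overrightarrow{\mathbf{e}})|$ to stay bounded.

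First, I would lift the ideal quadrilateral $Q$ with diagonal $\overrightarrow{\mathbf{e}}$ to the canonical convex domain $\Omega_\rho$. The four ideal vertices lie on $\partial\Omega_\rho$ as images of four points of $\partial_\infty \pi_1(S_{g,m})$ under $\xi_\rho$, and the two triangles of $Q$ are the convex hulls of three such vertices. Because $Q$ is embedded in a pair of pants inside $S_{g,m}$, and because an embedded pair of pants lifts to a convex fundamental region in $\Omega_\rho$ bounded by geodesics, the lift of $Q$ is an embedded domain in $\Omega_\rho$. Hence the area of $Q$ with respect to $\Omega_\rho$ is at most the canonical area of $(S_{g,m},\rho)$, which by assumption is at most $t$.

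Second, I would invoke \cite[Proposition 4.2]{Ki18} to extract a lower bound of the form
\[
parea_{\Omega_\rho}(Q) \;\geq\; G\bigl(\lvert\log D_1(\overrightarrow{\mathbf{e}})(\rho) - \log D_2(\overrightarrow{\mathbf{e}})(\rho)\rvert\bigr),
\]
where $G$ is an explicit increasing function with $G(s)\to\infty$ as $s\to\infty$, with the triple ratios of the two adjacent triangles treated as free parameters; if necessary, one uses Proposition \ref{proposition:trib} to absorb their contribution as a function of $t$ only. Then, as in Proposition \ref{proposition:trib}, Benz\'ecri's comparability of proper densities (Remark \ref{remark:blaschke}) yields a universal $C>0$ with
\[
parea_{\Omega_\rho}(Q) \;\leq\; C \cdot area_{\Omega_\rho}(Q) \;\leq\; C\,t.
\]
Combining the two inequalities gives $\lvert \log D_1(\overrightarrow{\mathbf{e}}) - \log D_2(\overrightarrow{\mathbf{e}})\rvert \leq G^{-1}(Ct) =: D(t)$, and this bound is independent of $\rho$ and of the choice of $\mathcal{T}$.

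The main obstacle, and the reason the hypothesis restricts $Q$ to lie in a pair of pants, is the embeddedness of the lift: an arbitrary ideal quadrilateral in $\mathcal{T}$ may have self-overlapping orbits once pushed into $\Omega_\rho$, in which case the step $area_{\Omega_\rho}(Q)\leq t$ fails. The pair-of-pants hypothesis sidesteps this by placing $Q$ inside a convex embedded piece of the universal cover. The other delicate point is extracting Kim's estimate in a form uniform in the surrounding triple ratios; this is where Proposition \ref{proposition:trib} feeds in, guaranteeing that the triple ratios are controlled by $t$ so that Kim's function $G$ can be chosen to depend only on $t$.
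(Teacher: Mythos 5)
Your proposal is structurally parallel to the proof of Proposition~\ref{proposition:trib}, but it differs from the paper's argument for Proposition~\ref{proposition:bulb} in a way that leaves a genuine gap. You assert the existence of an explicit increasing function $G$, with $G(s)\to\infty$, such that
\[
parea_{\Omega_\rho}(Q)\;\geq\; G\bigl(\lvert \log D_1(\overrightarrow{\mathbf{e}}) - \log D_2(\overrightarrow{\mathbf{e}})\rvert\bigr),
\]
uniformly over all relevant $\rho$ (after absorbing triple ratios using Proposition~\ref{proposition:trib}). But \cite[Proposition 4.2]{Ki18}, as the paper cites and sketches it, gives only the qualitative statement: \emph{if} the bulging invariant $\lvert\log D_1 - \log D_2\rvert$ tends to infinity, \emph{then} the area of the ideal quadrilateral tends to infinity. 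That is a limit implication, not a functional lower bound; extracting a uniform quantitative $G$ from it is nontrivial and is not performed in the paper. (Indeed, if such a $G$ could be produced cleanly, then Conjecture~\ref{conj:D}, which predicts $D(t)$ can be taken to be a polynomial, would be within reach; the paper flags this as an open problem, which suggests the quantitative route is harder than your sketch assumes.)

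The paper's actual proof handles the missing uniformity by a compactness-via-finiteness argument that you omit. Step one: for a \emph{fixed} oriented diagonal edge $\overrightarrow{\mathbf{e}}$, use the qualitative implication from Kim's result to conclude that an area bound $\leq t$ forces $\lvert\log D_1(\overrightarrow{\mathbf{e}})(\rho) - \log D_2(\overrightarrow{\mathbf{e}})(\rho)\rvert \leq d(t)$ for some constant depending on $t$ and the choice of $\overrightarrow{\mathbf{e}}$, but uniform over $\rho$. Step two: observe the mapping class group equivariance
\[
\lvert\log D_1(\delta\overrightarrow{\mathbf{e}})(\rho) - \log D_2(\delta\overrightarrow{\mathbf{e}})(\rho)\rvert = \lvert\log D_1(\overrightarrow{\mathbf{e}})(\delta\rho) - \log D_2(\overrightarrow{\mathbf{e}})(\delta\rho)\rvert,
\]
together with invariance of the canonical area under the mapping class group action, to propagate the bound $d(t)$ across the orbit of $\overrightarrow{\mathbf{e}}$. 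Step three: since there are only finitely many mapping class group orbits of embedded pairs of pants, hence finitely many such diagonal edges $\overrightarrow{\mathbf{e}}_1,\ldots,\overrightarrow{\mathbf{e}}_k$ up to the action, set $D(t):=\max\{d_1(t),\ldots,d_k(t)\}$. This finiteness step is the crux that your proposal replaces with the unjustified function $G$; it is precisely what makes the proof go through with only the qualitative form of Kim's estimate. You should add this mapping class group orbit argument (or otherwise rigorously establish $G$) to close the gap.
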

\begin{proof}
In \cite[Proposition 4.2]{Ki18}, Kim proved that no matter how one deforms the representation $\rho$, given an ideal quadrilateral $(x,w,y,z)$ and its oriented diagonal ideal edge $\overrightarrow{\mathbf{e}}=(x,y)$, if $\left|\log D_1(\overrightarrow{\mathbf{e}})(\rho)-\log D_2(\overrightarrow{\mathbf{e}})(\rho)\right|$ goes to infinity, then the canonical area of the ideal quadrilateral converges to infinity. (The argument goes as follows. We denote the tangent line at $u\in \partial \Omega$ by $u^*$. Suppose $z$ lies in the triangle $(x,y,x^*\cap y^*)$. By the freedom of the projective transformations, we fix one side of $(x,y)$ that contains $w$. When the bulging invariant goes to infinity, $z$ converges to $x^*\cap y^*$. Thus the canonical area of the ideal quadrilateral goes to infinity.) Hence when the canonical area of the ideal quadrilateral is bounded above by $t$, $\left|\log D_1(\overrightarrow{\mathbf{e}})(\rho)-\log D_2(\overrightarrow{\mathbf{e}})(\rho)\right|$ is bounded above by a constant $d(t)$.

For any element $\delta$ in mapping class group, we have
\[\left|\log D_1(\delta\overrightarrow{\mathbf{e}})(\rho)-\log D_2(\delta\overrightarrow{\mathbf{e}})(\rho)\right|=\left|\log D_1(\overrightarrow{\mathbf{e}})(\delta\rho)-\log D_2(\overrightarrow{\mathbf{e}})(\delta\rho)\right|.\]
Since the canonical area of $(S_{g,m},\delta\rho)$ is the same as the canonical area of $(S_{g,m},\rho)$, we have 
\[\left|\log D_1(\delta\overrightarrow{\mathbf{e}})(\rho)-\log D_2(\delta\overrightarrow{\mathbf{e}})(\rho)\right|=\left|\log D_1(\overrightarrow{\mathbf{e}})(\delta\rho)-\log D_2(\overrightarrow{\mathbf{e}})(\delta\rho)\right|\]
 is uniformly bounded above by $d(t)$ for any $\delta$.

Moreover, there are only finitely many mapping class group orbits of the embedded pairs of pants. Thus there are finitely many ideal quadrilaterals embedded in a pair of pants and their oriented diagonal ideal edges $\overrightarrow{\mathbf{e}}=\overrightarrow{\mathbf{e}}_1,\cdots,\overrightarrow{\mathbf{e}}_k$ up to the mapping class group actions. Suppose that $d_i(t)$ for $\overrightarrow{\mathbf{e}}_i$ is defined similarly as $d(t)$ for $\overrightarrow{\mathbf{e}}$. Let $D(t):=\max\{d_1(t),\cdots,d_k(t)\}$. Then for any $\rho\in \rm{Pos}_3(S_{g,m})$ such that the canonical area of $(S_{g,m},\rho)$ is bounded above by a positive constant $t$, for any ideal quadrilateral embedded in a pair of pants and its oriented diagonal ideal edge $\overrightarrow{\mathbf{e}}$ in any ideal triangulation $\mathcal{T}$ of $S_{g,m}$, the number $\left|\log D_1(\overrightarrow{\mathbf{e}})(\rho)-\log D_2(\overrightarrow{\mathbf{e}})(\rho)\right|$
is bounded above by a constant $D(t)$.
\end{proof}

\begin{conjecture}
\label{conj:D}
The constant $D(t)$ above can be a polynomial of $t$.
\end{conjecture}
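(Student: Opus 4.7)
The plan is to make Kim's compactness argument quantitative by replacing it with an explicit polynomial lower bound for the canonical area of the ideal quadrilateral in terms of the bulging invariant $B(\overrightarrow{\mathbf{e}}) := |\log D_1(\overrightarrow{\mathbf{e}})(\rho) - \log D_2(\overrightarrow{\mathbf{e}})(\rho)|$. This mirrors the proof strategy of Proposition \ref{proposition:trib}, which relies on the Arenas--Canary inequality
\[
\mathrm{parea}_{\Omega_\rho}(\mathbf{\Delta}) \geq \frac{\pi^2+(\log T(\mathbf{\Delta}))^2}{8}
\]
from \cite{AC18} to produce a bound polynomial in $t$; the goal here is to prove an analogous integral inequality for ideal quadrilaterals in which $B$ plays the role of $\log T$.

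First I would fix an ideal quadrilateral embedded in a pair of pants with vertex flags $(X,W,Y,Z)$ and diagonal edge $\overrightarrow{\mathbf{e}}$ going from $X$ to $Y$. Using the $\operatorname{PGL}(3,\mathbb{R})$-freedom I would normalize so that $(X,W,Y)$ is in a standard configuration. By Proposition \ref{proposition:trib} the two triple ratios $T(X,W,Y)$ and $T(X,Y,Z)$ are already controlled polynomially in $t$, so the remaining degree of freedom for $Z$ (after fixing its triple ratio with $X,Y$) is exactly the bulging parameter $B$. As $B\to\infty$ the point $z = Z^{(1)}$ runs along an explicit projective arc toward the intersection $x^*\cap y^*$ of the tangent lines at $x$ and $y$, which is the limiting configuration exploited in Kim's qualitative argument.

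The second step---the heart of the proof---is to establish a quantitative bound of the form
\[
\mathrm{area}_{\Omega_\rho}(\mathbf{Q}) \;\geq\; c\,B^\alpha - C
\]
for the canonical area of the ideal quadrilateral $\mathbf{Q}$, with exponent $\alpha>0$ and constants $c,C$ depending only on the already-bounded triple ratios. The idea is to split $\mathbf{Q}$ into two ideal triangles along $\overrightarrow{\mathbf{e}}$ and to estimate the Busemann density $h_{\Omega_\rho}$ along a geodesic segment joining $w$ to $z$, which becomes arbitrarily long in the Hilbert metric as $z$ approaches $\partial\Omega_\rho$. By Remark \ref{remark:blaschke} one may equivalently work with the Blaschke metric, whose Riemannian density is more directly comparable to a reference density on the $3$-Fuchsian locus; combining this with strict convexity of $\Omega_\rho$ near $x,y$ and comparison with an inscribed ellipse, in the style of \cite{AC18}, should extract a power-of-$B$ lower bound. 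Inverting this inequality gives $B \leq D(t)$ with $D(t)$ polynomial.

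Finally, the polynomial bound obtained for one normalized quadrilateral transfers to \emph{all} ideal quadrilaterals embedded in a pair of pants by the mapping-class-group finiteness argument already used in Proposition \ref{proposition:bulb}: only finitely many orbits exist, and one takes the maximum of the finitely many polynomials. The main obstacle is the second step: producing an explicit integral lower bound for $\mathrm{area}_{\Omega_\rho}(\mathbf{Q})$ with the correct polynomial dependence on $B$ is delicate because the Hilbert density depends on the global shape of $\Omega_\rho$, not merely on the flags at its vertices. A potentially cleaner route is via the affine sphere / cubic differential parameterization of Labourie and Loftin, where the Blaschke area admits a workable integral representation in terms of the holomorphic cubic differential and the bulging invariant can be matched with a real period of that differential, turning the desired estimate into a concrete inequality for a second-order elliptic PDE.
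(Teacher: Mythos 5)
This statement is stated in the paper as a \emph{conjecture}, not a theorem: the paper offers no proof, only the one-line remark that one should ``investigate the canonical area of a quadrilateral to obtain the above conjecture.'' So there is no argument of the author's to compare your proposal against; the best one can do is assess whether your sketch would close the gap.

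Your outline is sensible and is precisely the route the paper hints at (and it correctly parallels the proof of Proposition~\ref{proposition:trib}, where the Arenas--Canary inequality supplies the needed area lower bound). But as written it is a plan, not a proof. The decisive step --- a quantitative inequality of the form $\mathrm{area}_{\Omega_\rho}(\mathbf{Q}) \geq c\,B^\alpha - C$ with $\alpha>0$ and $c,C$ controlled by the (already bounded) triple ratios --- is asserted as the goal but not established. You are candid about this yourself, noting that the Hilbert/Busemann density depends on the global shape of $\Omega_\rho$, not just on the flags of the quadrilateral, so the desired bound does not follow from the local normalization you describe. The analogue of \cite[Proposition 0.3]{AC18} for a quadrilateral with a bulging parameter does not exist in the literature cited, and Kim's argument in \cite[Proposition 4.2]{Ki18} is a compactness/limit argument, which by its nature gives no rate and hence cannot be read off as polynomial. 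Two further points to be careful about if you pursue this: (i) the normalization step fixes $T(X,W,Y)$ and $T(X,Y,Z)$ only up to the polynomial bound $T(t)$ from Proposition~\ref{proposition:trib}, so your constants $c,C$ will themselves depend on $t$ and you must track that dependence to ensure the final bound remains polynomial; (ii) the alternative Labourie--Loftin route is plausible but would require an actual identity or inequality relating the bulging invariant to a period of the cubic differential, which is not currently available. In short, the proposal identifies the correct strategy and the correct obstruction, but it does not yet contain the estimate that would turn the conjecture into a theorem.
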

We suggest to investigate the canonical area of a quadrilateral to obtain the above conjecture.

Before we continue the uniformly boundedness of some other projective invariants, let us recall the definitions that are used to describe the shape of convex domain $\Omega$. 
\begin{defn}\cite[$\alpha$-H\"older and $\beta$-convex]{Ben04} 
Let $\Omega\subset\mathbb{R}^2\subset\mathbb{RP}^2$ be a convex open subset of $\mathbb{RP}^2$ and fix an Euclidean metric $d_E$ on $\mathbb{R}^2$. We say that $\partial\Omega$ is \emph{$\alpha$-H\"older}, for $\alpha\in(1,2]$, if for every compact subset $K\subset\partial\Omega$, there exists a constant $C_K>0$ such that, for all $p,q\in K$, we have:
\begin{align*}
d_E(q, T_p\partial\Omega)\leq C_K\cdot d_E(q,p)^\alpha.
\end{align*}
We say that $\partial\Omega$ is \emph{$\beta$-convex}, for $\beta\in[2,+\infty)$, if there exists a constant $C>0$ such that for all $p,q\in\partial\Omega$, we have:
\begin{align*}
d_E(q, T_p\partial\Omega)\geq C \cdot d_E(q,p)^\beta.
\end{align*}
\end{defn}

\begin{defn}\cite[quasisymmetric]{Ben03}
We say that a $C^1$ convex function $f:I\rightarrow J$ between two intervals of $\mathbb{R}$ is {\em $H$-quasisymmetrically convex} if for any $x-h,x+h \in I$, we have
\[|f(x+h)-f(x)-f'(x)h | \leq H |f(x-h)-f(x)+f'(x)h|.\]

We say that a continuous function $f:I\rightarrow J$ between two intervals of $\mathbb{R}$ is {\em $H$-quasisymmetric} if for any $x-h,x+h \in I$, we have
\[|f(x+h)-f(x) | \leq H |f(x-h)-f(x)|.\]

Let $F$ be the graph function of the $C^1$ convex curve $\partial\Omega$, we say that $\Omega$ or $F$ is 
\begin{enumerate}
\item {\em $H$-quasisymmetrically convex} if the function $F$ is $H$-quasisymmetrically convex on any compact interval.
\item {\em derivative $H$-quasisymmetrically convex} if the function $F'$ is $H$-quasisymmetric on any compact interval.
\end{enumerate}
\end{defn}
By \cite[Proposition 5.2]{Ben03}, a convex fonction $f$ is quasisymmetrically convex on any compact interval if and only if its derivative $f'$ is quasisymmetric on any compact interval.

\textbf{$\ell_2/\ell_1$ invariant boundedness}

\begin{prop}
\label{proposition:l1l2b}
For any given $\rho\in \rm{Pos}_3^h(S_{g,m})$ such that the canonical area of $(S_{g,m},\rho)$ is bounded above by a positive constant $t$, for any non-trivial non-peripheral $\gamma\in \pi_1(S_{g,m})$, there exists a constant $L(t)$ which does not depend on $\rho$ such that
\[\frac{\ell_2^\rho(\gamma)}{\ell_1^\rho(\gamma)}\leq L(t).\]
\end{prop}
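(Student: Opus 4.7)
The plan is to analyze the local dynamics of $\rho(\gamma)$ at its attracting fixed point in $\partial\Omega_\rho$ and to argue that a very large ratio $\ell_2/\ell_1$ forces the canonical area of a tubular neighborhood of the axis of $\gamma$ to blow up, in the same spirit as Propositions \ref{proposition:trib} and \ref{proposition:bulb}.

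First I would reduce to the case where $\gamma$ is simple: for a non-simple $\gamma$ one can either pass to a finite cover in which $\gamma$ lifts to a simple loop (simple root lengths scale controllably under covers), or replace $\gamma$ by the boundary of a regular neighborhood of its geodesic representative. Next I would normalize $\rho(\gamma)=\mathrm{diag}(\lambda_1,\lambda_2,\lambda_3)$ with $\lambda_1>\lambda_2>\lambda_3>0$ and pass to affine coordinates $(u,v)$ in which the attracting fixed point $p^+_\gamma$ is the origin, the axis of $\gamma$ is the $v$-axis, and the tangent line $T_{p^+_\gamma}\partial\Omega_\rho$ is the $u$-axis. Then $\rho(\gamma)$ acts by $(u,v)\mapsto (e^{-\ell_1}u,\,e^{-\ell_1-\ell_2}v)$, and the $\rho(\gamma)$-invariance of $\partial\Omega_\rho$ together with the tangency condition forces the germ of $\partial\Omega_\rho$ at $p^+_\gamma$ to take the form $v=c\,|u|^{\beta}$ with $\beta = 1+\ell_2(\gamma)/\ell_1(\gamma)$. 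In particular, $\ell_2/\ell_1\to\infty$ precisely when the germ of $\partial\Omega_\rho$ at $p^+_\gamma$ degenerates to an arbitrarily flat profile.

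The heart of the argument would be a lower bound of the form
\[
\mathrm{Vol}_{\Omega_\rho}(F_\gamma) \geq \phi(\beta), \qquad \phi(\beta)\to\infty \text{ as } \beta\to\infty,
\]
for a carefully chosen $\langle\rho(\gamma)\rangle$-fundamental domain $F_\gamma$ in a definite Hilbert-metric tubular neighborhood of the axis. I would produce $\phi$ by estimating the Busemann density on the region where the above local model is accurate, using that the unit Finsler balls of the Hilbert metric become very anisotropic on this region as $\beta\to\infty$. The projection of $F_\gamma$ to $S_{g,m}$ (or, when $\rho\in\mathrm{Pos}_3^h$, to its double as in Definition~\ref{definition:carea}) is an immersed annular region whose multiplicity is bounded in terms of $(g,m)$ alone, so its canonical area is at most a universal multiple of $t$; combined with the lower bound this gives $\phi(\beta)\leq C(g,m)\,t$, which inverts to $\ell_2(\gamma)/\ell_1(\gamma)\leq L(t)$ for a function $L$ depending only on $(g,m)$.

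The main technical obstacle is the quantitative divergence $\phi(\beta)\to\infty$: since the Busemann density depends on the global shape of $\partial\Omega_\rho$ and not only on its germ at $p^+_\gamma$, the naive substitution of the model curve $v=c|u|^\beta$ is only valid on a neighborhood whose size must itself be controlled uniformly in $\rho$. If a direct estimate turns out to be delicate, I would fall back on a compactness/contradiction argument in the style of the proof of Proposition~\ref{proposition:bulb}: assuming a sequence $(\rho_n,\gamma_n)$ with uniformly bounded canonical area and $\ell_2(\gamma_n)/\ell_1(\gamma_n)\to\infty$, the normalized domains $\Omega_{\rho_n}$ would develop increasingly flat boundary profiles near the attracting fixed point, and after extraction of a subsequence using Benoist-type convexity estimates (and the already-established boundedness of the triangle and bulging invariants) the limit domain would contain a genuine line segment in its boundary through the limit of $p^+_{\gamma_n}$, violating strict convexity and thus contradicting positivity of the limit representation by Definition~\ref{defn:pos3}. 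A final mapping-class-group orbit argument, as at the end of the proof of Proposition~\ref{proposition:bulb}, would promote this to a uniform bound $L(t)$ independent of $\gamma$.
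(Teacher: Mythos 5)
Your approach is genuinely different from the paper's: you try to \emph{localize} the constraint to a $\langle\rho(\gamma)\rangle$-fundamental domain in a Hilbert-metric tube around the axis of $\gamma$, whereas the paper's proof is a global chain of citations (bounded area of ideal triangles $\Rightarrow$ bounded Gromov hyperbolicity constant by Colbois--Vernicos--Verovic $\Rightarrow$ bounded quasisymmetry constant $\Rightarrow$ bounded H\"older exponent $\alpha$ and convexity exponent $\beta$ by Benoist $\Rightarrow$ $\ell_2/\ell_1 \le \min\{\beta-1, 1/(\alpha-1)\}$ by Benoist's Corollaire 5.3). Your local normalization and the observation that $\langle\rho(\gamma)\rangle$-invariance forces the germ of $\partial\Omega_\rho$ at $p^+_\gamma$ to scale like $v=c|u|^{1+\ell_2/\ell_1}$ is correct — indeed it is exactly the content of Benoist's Corollaire 5.3 that the paper cites — so your proposal amounts to re-proving that inequality and then replacing the whole Gromov-hyperbolicity chain with a putative tube-area estimate.

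The tube-area estimate, however, is the genuine gap, and I believe it is false as stated. In the model chart where $\rho(\gamma)=\mathrm{diag}(e^{-\ell_1},1,e^{\ell_2})$ acts by $(u,v)\mapsto(e^{-\ell_1}u,e^{-\ell_1-\ell_2}v)$ and $\partial\Omega_\rho$ is locally $\{v=|u|^\beta\}$ with $\beta=1+\ell_2/\ell_1$, the Finsler unit ball at an axis point $(0,v)$ has $u$-semi-axis $\asymp v^{1/\beta}$ and $v$-semi-axis $\asymp v$, so the Busemann density is $\asymp v^{-1-1/\beta}$. A Hilbert-radius-$r$ tube about the axis is locally $\{|u|\lesssim r\,v^{1/\beta}\}$, and integrating the density over one fundamental period $e^{-\ell(\gamma)}v_0<v<v_0$ gives
\[
\mathrm{Vol}_{\Omega_\rho}(F_\gamma)\;\asymp\;\int r\,v^{1/\beta}\cdot v^{-1-1/\beta}\,dv\;\asymp\;r\,\ell(\gamma),
\]
which is essentially independent of $\beta$. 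So no function $\phi(\beta)\to\infty$ of the type you want comes out of the tube around the axis alone; the exponent $\beta$ is a \emph{global} hyperbolicity feature of $\Omega_\rho$, not something detected by the area of the collar of one closed geodesic (especially since $\ell(\gamma)$ itself can be small). Your fallback compactness argument is closer to how Benoist and CVV actually prove the relevant steps, but as written it is not self-contained: boundedness of the triangle and bulging invariants does not by itself give precompactness of the sequence of normalized domains, and one must control where the attracting fixed points $p^+_{\gamma_n}$ go under the Benz\'ecri normalization before concluding that a segment appears in the limit boundary. I would recommend abandoning the tube mechanism and instead citing the chain CVV \cite{CVV08} (area $\Rightarrow$ $\delta$-hyperbolicity) and Benoist \cite{Ben03,Ben04} ($\delta$-hyperbolicity $\Rightarrow$ exponents $\Rightarrow$ $\ell_2/\ell_1$ bound), which is what the paper does.
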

\begin{proof}
If $\rho \in \rm{Pos}_3^h(S_{g,m})$, after doubling, we have the boundary of convex domain $\Omega_{\rho}$ is both $\alpha$-H\"older and $\beta$-convex.
Then \cite[Corollary 5.3]{Ben04} states that for any $\gamma \in \pi_1(S_{g,m})$ that is non-trivial and non-peripheral, we have
\[\frac{\ell_2^\rho(\gamma)}{\ell_1^\rho(\gamma)}\leq \min\left\{\beta-1,\frac{1}{\alpha-1}\right\}.\]

Now for any $\rho\in \rm{Pos}_3^h(S_{g,m})$ such that the area of $(S_{g,m},\rho)$ is bounded above by a positive constant $t$, the area of any ideal triangle is also bounded above by $t$. By \cite[Theorem 2]{CVV08}, there exists a constant $C>0$ such that $\Omega_{\rho}$ is $Ct$-(Gromov) hyperbolic for any such $\rho$. By \cite[Proposition 5.2, 6.6]{Ben03}, any $\Omega_\rho$ that is $Ct$-hyperbolic implies that there exists a $H(Ct)>0$ such that $\Omega_\rho$ is derivative $H(Ct)$-quasisymmetrically convex. Following from \cite[Lemma 4.9]{Ben03}, there exists $\alpha(Ct)\in(1,2]$ and $\beta(Ct)\in[2,+\infty)$ such that $\partial \Omega_\rho$ is both $\alpha(Ct)$-H\"older and $\beta(Ct)$-convex for any such $\rho$. Let $L(t)=\min\left\{\beta(Ct)-1,\frac{1}{\alpha(Ct)-1}\right\}>0$. We conclude that, for any $\rho\in  \rm{Pos}_3^h(S_{g,m})$ such that the canonical area of $(S_{g,m},\rho)$ is bounded above by a positive constant $t$, for any non-trivial and non-peripheral $\gamma\in \pi_1(S_{g,m})$, there exists a constant $L(t)$ such that
\[\frac{\ell_2^\rho(\gamma)}{\ell_1^\rho(\gamma)}\leq L(t).\]
\end{proof}

\begin{conjecture}
\label{conj:L}
The constant $L(t)$ above can be a polynomial of $t$.
\end{conjecture}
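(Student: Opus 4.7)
The plan is to quantify each step of the chain used to prove Proposition \ref{proposition:l1l2b}, or else to bypass that chain entirely by arguing directly from the Fock--Goncharov coordinates. The proof in the paper runs through the sequence
\[
\text{area}\le t \;\Rightarrow\; Ct\text{-hyperbolic} \;\Rightarrow\; H(Ct)\text{-quasisymmetric} \;\Rightarrow\; \alpha(Ct)\text{-H\"older and }\beta(Ct)\text{-convex} \;\Rightarrow\; \tfrac{\ell_2}{\ell_1}\le\min\!\bigl\{\beta-1,\tfrac{1}{\alpha-1}\bigr\},
\]
in which the first arrow is already linear in $t$ by \cite{CVV08} and the last is a clean algebraic bound from \cite[Corollary 5.3]{Ben04}. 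So polynomiality of $L(t)$ reduces to making the two middle arrows polynomial.

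\textbf{Approach A (quantitative Benoist).} I would revisit \cite[Proposition 5.2, 6.6]{Ben03} and extract $H(Ct)$ as an explicit function of the Gromov hyperbolicity constant: the derivative quasisymmetry constant of $\partial\Omega_\rho$ is governed by cross-ratio distortion of boundary quadruples, which is in turn governed by Gromov products along the Hilbert geodesics joining them. Then I would trace through \cite[Lemma 4.9]{Ben03} to write $\alpha,\beta$ as explicit functions of $H$; the H\"older and convexity exponents there arise from iterating the quasisymmetry inequality a controlled number of times, which should give polynomial (in fact rational) dependence on $H$. Composing the three explicit steps would then give a polynomial $L(t)$.

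\textbf{Approach B (direct Fock--Goncharov computation).} Every non-trivial non-peripheral $\gamma$ can be represented, after a mapping class group twist, as a simple closed geodesic lying as a cuff of some pants decomposition $\mathcal{P}$, with an ideal triangulation $\mathcal{T}$ subordinate to $\mathcal{P}$. In these coordinates, the diagonal entries $\lambda_1,\lambda_2,\lambda_3$ of $\rho(\gamma)$ (written in its totally-positive form) are monomial-like expressions in the triple ratios of the triangles adjacent to $\gamma$ and the edge functions of the edges meeting $\gamma$. Propositions \ref{proposition:trib} and \ref{proposition:bulb}, together with the constraint that $\gamma$ is a cuff (so that $\ell_1(\gamma)+\ell_2(\gamma)$ itself contributes only bounded mass), should turn a polynomial bound on triple ratios and bulging differences into a polynomial bound on $\bigl|\log(\lambda_1/\lambda_2)-\log(\lambda_2/\lambda_3)\bigr|$, hence on $\ell_2/\ell_1$.

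\textbf{Main obstacle.} In Approach A, the implication ``Gromov hyperbolic $\Rightarrow$ quasisymmetric'' in \cite{Ben03} is proved via a normal families argument, and converting it into an explicit polynomial estimate seems to require genuinely new control on Hilbert geodesics in triangle-invariant-bounded convex domains. In Approach B, the bulging invariants are only bounded through the non-explicit constant of Proposition \ref{proposition:bulb}, so resolving Conjecture \ref{conj:D} is a prerequisite. I would prioritize Conjecture \ref{conj:D}, attacking it by a direct estimate of the canonical area of an ideal quadrilateral as a function of its bulging invariant (the asymptotic degeneration described in the proof of Proposition \ref{proposition:bulb} already suggests the area grows at least linearly in $|\log D_1-\log D_2|$), and then deduce Conjecture \ref{conj:L} via Approach B.
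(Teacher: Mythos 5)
You have not proved the statement, and neither has the paper: Conjecture~\ref{conj:L} is left open there, the only remark attached to it being ``We suggest to investigate the quantitative relation between $\delta$-hyperbolic and derivative $H$-quasisymmetrically convex in \cite[Proposition 6.6]{Ben03}.'' Your Approach~A identifies precisely that implication as the bottleneck, so your diagnosis agrees with the paper's, but you have not filled the gap, and you acknowledge as much. One refinement is worth recording: the step you worry about afterwards, from the quasisymmetry constant $H$ to the H\"older and convexity exponents $(\alpha,\beta)$, is already explicit in \cite[Lemma 4.9]{Ben03}; the paper records $\alpha(H)=1+\log_2(1+H^{-1})$ and $\beta(H)=1+\log_2(1+H)$ in Remark~\ref{remark:comparable}, and since $\min\{\beta-1,1/(\alpha-1)\}$ is then of order $\log H$ (and a fortiori polynomial in $H$), the entire conjecture reduces to showing that $H$ grows at most polynomially in the Gromov hyperbolicity constant $\delta$ --- there is no need for you to re-derive the $H\Rightarrow(\alpha,\beta)$ dependence.

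Approach~B has two problems. First, it is not an independent route: it is conditioned on Conjecture~\ref{conj:D}, which the paper also leaves open, so at best you have reduced one open conjecture to another, not proved the statement. Second, the opening reduction ``every non-trivial non-peripheral $\gamma$ can be represented, after a mapping class group twist, as a cuff of some pants decomposition'' is only valid for \emph{simple} $\gamma$, whereas both Proposition~\ref{proposition:l1l2b} and the definition of $mL(\rho)$ quantify over \emph{all} non-peripheral $\gamma\in\pi_1(S_{g,m})$, simple or not. So even granting Conjecture~\ref{conj:D}, Approach~B as written would only bound $\ell_2/\ell_1$ on the simple spectrum, and you would need a separate argument (say, through Benoist's boundary-regularity route, which applies uniformly to all group elements) to promote that to the full conjugacy spectrum. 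In short, your Approach~A is aligned with the paper's one suggested direction and correctly locates the missing quantitative input, but the conjecture remains open.
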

We suggest to investigate the quantitative relation between $\delta$-hyperbolic and derivative $H$-quasisymmetrically convex in \cite[Proposition 6.6]{Ben03}.

\subsection{Twist flows} 
\label{subsection:twist}
In \cite{G86}, Goldman introduced the {\em twist flow} $\{\phi_s\}_{s\in\mathbb{R}}$ along the non-peripheral oriented simple closed geodesic $\gamma$ on the representation space. For any positive $\operatorname{PGL}(3,\mathbb{R})$ representation $\rho$, let $(v_1,v_2,v_3)$ be the eigenvectors for the eigenvalues $\lambda_1,\lambda_2,\lambda_3$ of $\rho(\gamma)$ where $\lambda_1>\lambda_2>\lambda_3>0$. For $a+b+c=0$, let $g_t$ be the projective transformation
\[\left(\begin{array}{ccc}
e^{at} & 0 & 0 \\ 
0 & e^{bt} & 0 \\ 
0 & 0 & e^{ct}
\end{array} \right)\]
with respect to the basis $(v_1,v_2,v_3)$.
\begin{itemize}
\item When the oriented simple closed geodesic $\gamma\in \pi_1(S)$ is separating, $\gamma$ cuts $S$ into two connected surfaces $S_1$ and $S_2$. The group $\pi_1(S)=\pi_1(S) *_{<\gamma>} \pi_2(S)$ is amalgamated over the cyclic group $<\gamma>$. We define
\begin{equation*}
\phi_s \rho(\delta)= \begin{cases}

   \rho(\delta) &\mbox{if $\delta\in \pi_1(S_1)$,}\\

   g_t \rho(\delta) g_t^{-1} &\mbox{if $\delta\in \pi_1(S_2)$.}
\end{cases} 
\end{equation*}
\item When the oriented simple closed geodesic $\gamma\in \pi_1(S)$ is non-separating, $\gamma$ cuts $S$ into $S\backslash \gamma$ with two extra boundary components $\gamma_+$ and $\gamma_-$. The group $\pi_1(S)$ is generated by the subgroup $\pi_1(S\backslash \gamma)$ and $\beta$ with the relation $\beta\gamma_+\beta\gamma_-=1$. We define
\begin{equation*}
\phi_s \rho(\delta)= \begin{cases}

   \rho(\delta) &\mbox{if $\delta\in \pi_1(S\backslash \gamma)$,}\\

   \rho(\beta) g_t  &\mbox{$\delta=\beta$.}
\end{cases} 
\end{equation*}
\end{itemize}
By \cite[Theorem 4.5, 4.7]{G86}, Goldman showed that the above flow defined on the representation space $\rm{Hom}(\pi_1(S),\operatorname{PGL}(3,\mathbb{R}))$ descends to a Hamiltonian flow on the representation variety $\rm{Hom}(\pi_1(S),\operatorname{PGL}(3,\mathbb{R}))/\operatorname{PGL}(3,\mathbb{R})$.
Any two different twist flows along $\gamma$ are commuting with each other. We denote 
\begin{enumerate}
\item the twist flow along $\gamma$ for $(a,b,c)=(\frac{2}{3},-\frac{1}{3},-\frac{1}{3} )$ by $\theta_1$, and 
\item the twist flow along $\gamma$ for $(a,b,c)=(\frac{1}{3},\frac{1}{3},-\frac{2}{3} )$ by $\theta_2$.
\end{enumerate} 
Then $\theta_1$, $\theta_2$, $\frac{1}{2}(\theta_1+\theta_2)$ are the Hamiltonian flows for $\ell_1$, $\ell_2$, $\ell$ length of $\gamma$ respectively. The twist flow $\theta_2-\theta_1$ is called the {\em twist-bulging flow}.
\begin{prop}%
\label{proposition:tbu}
For any given $\rho\in \rm{Pos}_3^h(S_{g,m})$ such that the canonical area of $(S_{g,m},\rho)$ is bounded above by a positive constant $t$, for any non-trivial non-peripheral simple $\gamma\in \pi_1(S_{g,m})$, suppose $b^\rho(\gamma)$ is the maximal of the absolute value of the twist-bulging deforming parameter of $\rho$ along $\gamma$ such that the resulting representation still lies in $\rm{Pos}_3^h(S_{g,m})$ with the canonical area bounded above $t$. There exists a polynomial $B(t)$ which does not depend on $\rho$ and $\gamma$ such that
\[b^\rho(\gamma)
\leq  B(t).\]
\end{prop}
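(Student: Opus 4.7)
The plan is to translate the twist-bulging motion along $\gamma$ into an affine-asymptotic motion of the bulging invariant of an ideal quadrilateral embedded in a pair of pants, and then invoke Proposition \ref{proposition:bulb}.

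Since $\gamma$ is simple and non-peripheral, I would choose a pants decomposition $\mathcal{P}$ of $S_{g,m}$ with $\gamma\notin\mathcal{P}$ and a subordinate ideal triangulation $\mathcal{T}$ such that some edge $\mathbf{e}\in\mathcal{T}$ crosses $\gamma$ and is the diagonal of an ideal quadrilateral $Q$ entirely embedded in a pair of pants $P\in\mathcal{P}$, with two vertices of $Q$ on each component of $P\setminus\gamma$. By mapping-class-group invariance of the statement and the finiteness of the MCG orbits of such configurations $(P,Q,\mathbf{e},\gamma)$, one may assume the configuration lies among finitely many model topological types, which will allow constants to be taken uniformly.

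The twist-bulging flow $\phi_s$ along $\gamma$ transforms the two flags at vertices of $Q$ lying on the ``moving'' side of $\gamma$ by $g_s=\exp(s\cdot\mathrm{diag}(-\tfrac13,\tfrac23,-\tfrac13))$ (written in the basis of $\rho(\gamma)$-eigenvectors) and fixes the two flags on the other side. Substituting into Definition \ref{definition:edgefun}, each factor $\Delta(\cdots)$ in $D_1,D_2$ becomes a finite sum of exponentials $a_j e^{\alpha_j s}$ with explicit exponents $\alpha_j$; letting $|s|\to\infty$, the dominant exponent in each factor produces the asymptotic
\[B(s)\;:=\;\log D_1(\overrightarrow{\mathbf{e}})(\phi_s\rho)-\log D_2(\overrightarrow{\mathbf{e}})(\phi_s\rho)\;=\;c_1 s+O(1),\]
where the nonzero slope $c_1$ depends only on the model type. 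By the finiteness of model types, $|c_1|\geq c_0>0$ uniformly. Since $Q$ is embedded in $P$ and both $\rho$ and $\phi_{\pm b^\rho(\gamma)}\rho$ have canonical area $\leq t$ by definition of $b^\rho(\gamma)$, Proposition \ref{proposition:bulb} yields $|B(\pm b^\rho(\gamma))|\leq D(t)$. Combined with the asymptotic this gives $c_0\,b^\rho(\gamma)\leq 2D(t)+O(1)$.

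The main obstacle is obtaining a \emph{polynomial} bound $B(t)$ rather than merely a function. Proposition \ref{proposition:bulb} produces a function $D(t)$ that is not a priori polynomial; this is precisely Conjecture \ref{conj:D}. To establish polynomial dependence unconditionally, I would quantify \cite[Proposition 4.2]{Ki18}, showing that the canonical area of a bulged ideal quadrilateral is bounded below by a positive power of its bulging invariant, say $\mathrm{area}(Q)\geq|B_Q|^k/C_1$ for some $k,C_1>0$. With this polynomial area growth, the constraint $\mathrm{area}(Q)\leq t$ forces $|B_Q|\leq(C_1 t)^{1/k}$, and the chain above then delivers $b^\rho(\gamma)\leq(2/c_0)(C_1 t)^{1/k}+O(1)=:B(t)$, polynomial in $t$. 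The technically hardest step is therefore the quantitative projective-geometric estimate for the area of a bulged quadrilateral, based on analyzing the degeneration of one vertex toward the intersection of two boundary tangent lines as the bulging parameter diverges.
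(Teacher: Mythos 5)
Your proposal takes a genuinely different route from the paper, and it has a genuine gap. The paper does not go through the bulging invariant of a quadrilateral at all; instead it invokes the Foulon--Kim argument from the proof of \cite[Theorem~3.7]{FK16}: after maximally twist-bulging along $\gamma$, there is a cylinder neighbourhood of $\gamma$ of injectivity radius $\ell_\gamma$ and length $Cb^\rho(\gamma)$; filling this cylinder with $\bigl[Cb^\rho(\gamma)/(2(\ell_\gamma+1))\bigr]$ balls, each of Blaschke (hence, by \cite[Prop.~3.2]{BH13}, Hilbert) area $\approx e^{c\ell_\gamma}$, gives a lower bound for the area of the surface that is linear in $b^\rho(\gamma)$; since the area is $\leq t$, one concludes $b^\rho(\gamma)\leq B(t)$ directly, with no reference to Proposition~\ref{proposition:bulb} or Conjecture~\ref{conj:D}.

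The gap in your argument is precisely the dependence on Conjecture~\ref{conj:D}. You acknowledge that Proposition~\ref{proposition:bulb} only gives a function $D(t)$, and that to get a polynomial bound you ``would quantify'' \cite[Prop.~4.2]{Ki18} to obtain $\mathrm{area}(Q)\gtrsim |B_Q|^k$. But that quantitative estimate \emph{is} Conjecture~\ref{conj:D}, which the paper explicitly leaves open. So as written your proof conditionally reduces Proposition~\ref{proposition:tbu} to an open conjecture rather than proving it, whereas the paper is claiming an unconditional polynomial bound and achieves it by the cylinder-area estimate, which bypasses the quadrilateral entirely. There are also secondary issues you would need to resolve even assuming Conjecture~\ref{conj:D}: the affine asymptotic $B(s)=c_1 s + O(1)$ has an $O(1)$ term that depends on the initial representation $\rho$ (not just the finitely many topological model types), and the nonvanishing and uniform lower bound $|c_1|\geq c_0>0$ of the slope is asserted but not established --- cancellations between the dominant exponents of $D_1$ and $D_2$ could in principle kill the slope for some configurations, so this has to be checked against the explicit form of the edge functions.
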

\begin{proof}
For any non-trivial non-peripheral simple $\gamma\in \pi_1(S_{g,m})$, let us consider the representation $\rho_1$ obtained by twist-bulging to the maximal from $\rho$.
By the proof of \cite[Theorem 3.7]{FK16}, there is a special cylinder neighbourhood around $\gamma$ with injective radius $\ell_\gamma$ and length $C b^\rho(\gamma)$ for some constant $C>0$. To estimate the area of the cylinder, we can fill the cylinder by $[C b^\rho(\gamma)/(2(\ell_\gamma+1))]$ discs. By \cite[Proposition 3.2]{BH13}, the Riemannian Blaschke metric is uniformly comparable to the Hilbert metric. Thus the area of the disc is approximately $e^{c\ell_\gamma}$ where $c$ is a constant. Then the area of the cylinder is bounded below by $C_0 [C b^\rho(\gamma)/(2(\ell_\gamma+1))] e^{c\ell_\gamma}$ and bounded above by $t$. Thus there is a polynomial $B(t)$ which does not depend on $\rho$ and $\gamma$ such that $b^\rho(\gamma)\leq B(t)$. 
\end{proof}

\subsection{Bounded moduli spaces}
\label{section:boundedm}
Propositions \ref{proposition:trib}, \ref{proposition:bulb}, \ref{proposition:l1l2b} and \ref{proposition:tbu} suggest us to define the following mapping class group invariant subsets of $\rm{Pos}_3(S_{g,m})(\mathbf{L})\subset \rm{Pos}^h_3(S_{g,m})$.
\begin{defn}[Bounded subsets]
\label{definition:bpos}
Given $\rho \in \rm{Pos}'_3(S_{g,m})$,
\begin{enumerate}
\item let $mT(\rho)$ be the maximal value of $\left|\log T(\mathbf{\Delta})(\rho)\right|$ for any anticlockwise ordered ideal triangle $\mathbf{\Delta}$ in any ideal triangulation $\mathcal{T}$ of $S_{g,m}$;
\item let $mD(\rho)$ be the maximal value of $\left|\log D_1(\overrightarrow{\mathbf{e}})(\rho)-\log D_2(\overrightarrow{\mathbf{e}})(\rho)\right|$ for any ideal quadrilateral embedded in one pair of pants and its oriented diagonal ideal edge $\overrightarrow{\mathbf{e}}$ in any ideal triangulation $\mathcal{T}$ of $S_{g,m}$;
\item let $mL(\rho)$ be the maximal value of $\frac{\ell_2^\rho(\gamma)}{\ell_1^\rho(\gamma)}$ for any non-trivial non-peripheral $\gamma\in \pi_1(S_{g,m})$.
\item let $mB(\rho)$ be the maximal value of $ b^\rho(\gamma)$ for non-trivial non-peripheral $\gamma\in \pi_1(S_{g,m})$.

\end{enumerate}
The {\em $t$-bounded subset} $\rm{Pos}_3^{t}(S_{g,m})(\mathbf{L})$ of $\rm{Pos}_3(S_{g,m})(\mathbf{L})$ is the collection of these $\rho\in \rm{Pos}_3(S_{g,m})(\mathbf{L})$ such that $mT(\rho)$, $mD(\rho)$, $mL(\rho)$ and $mb(\rho)$ are bounded above by $t$.\\
The {\em $t$-area bounded subset} $\rm{APos}_3^{t}(S_{g,m})(\mathbf{L})$ of $\rm{Pos}_3(S_{g,m})(\mathbf{L})$ is the collection of these $\rho\in \rm{Pos}_3(S_{g,m})(\mathbf{L})$ such that the canonical area of $(S_{g,m},\rho)$ is bounded above by $t$. We have the mapping class group invariant exhaustions
\[\rm{Pos}_3(S_{g,m})(\mathbf{L})=\bigcup_{t>0} \rm{Pos}_3^{t}(S_{g,m})(\mathbf{L}),\;\;\;\rm{Pos}_3(S_{g,m})(\mathbf{L})=\bigcup_{t>0} \rm{APos}_3^{t}(S_{g,m})(\mathbf{L}).\]
\end{defn}
Proved by Goldman \cite{G90} and Labourie \cite{Lab08}, the mapping class group $\rm{Mod}(S_{g,m})$ acts on $\rm{Pos}_3(S_{g,m})(\mathbf{L})$ properly and discontinuously. Thus the quotient is well defined. 
We are ready to introduce our main objects that we study.
\begin{defn}
\label{definition:mainob}
The {\em moduli space of unmarked positive convex $\mathbb{RP}^2$ structures} on $S_{g,m}$ with boundary simple root lengths $\mathbf{L}$ is $\rm{Pos}_3(S_{g,m})(\mathbf{L})/\rm{Mod}(S_{g,m})$, denoted by $\mathcal{H}(S_{g,m})(\mathbf{L})$.

The {\em moduli space of unmarked $t$-bounded positive convex $\mathbb{RP}^2$ structures} on $S_{g,m}$ with boundary simple root lengths $\mathbf{L}$ is $\rm{Pos}_3^t(S_{g,m})(\mathbf{L})/\rm{Mod}(S_{g,m})$, denoted by $\mathcal{H}^t(S_{g,m})(\mathbf{L})$.

The {\em moduli space of unmarked $t$-area bounded positive convex $\mathbb{RP}^2$ structures} on $S_{g,m}$ with boundary simple root lengths $\mathbf{L}$ is $\rm{APos}_3^{t}(S_{g,m})(\mathbf{L})/\rm{Mod}(S_{g,m})$, denoted by $\mathcal{AH}^{t}(S_{g,m})(\mathbf{L})$.
\end{defn}
Let 
\[\mathcal{H}(S_{g,m})(\mathbf{L})=\bigcup_{t>0} U^{t}(S_{g,m})(\mathbf{L}),\;\;\;\mathcal{H}(S_{g,m})(\mathbf{L})=\bigcup_{t>0} W^{t}(S_{g,m})(\mathbf{L})\]
be two exhaustions of $\mathcal{H}(S_{g,m})(\mathbf{L})$. We want to compare two exhaustions in the following way.
\begin{defn}[Comparable]
We say that the subset $U^{t}(S_{g,m})(\mathbf{L})$ is {\em comparable} to $W^{t}(S_{g,m})(\mathbf{L})$ if there exist $c(t)$ and $C(t)$ such that
\[U^{c(t)}(S_{g,m})(\mathbf{L})\subset W^{t}(S_{g,m})(\mathbf{L})\subset U^{C(t)}(S_{g,m})(\mathbf{L}).\] 
Moreover, if both $c(t)$ and $C(t)$ are polynomial (exponential resp.) function of $t$, we say that $U^{t}(S_{g,m})(\mathbf{L})$ is {\em polynomially (exponentially resp.) comparable} to $W^{t}(S_{g,m})(\mathbf{L})$.
\end{defn}
Propositions \ref{proposition:trib}, \ref{proposition:bulb}, \ref{proposition:l1l2b} and \ref{proposition:tbu} implies that
\begin{cor}
\label{corollary:AHH}
$\mathcal{AH}^{t}(S_{g,m})(\mathbf{L})\subset \mathcal{H}^{C(t)}(S_{g,m})(\mathbf{L})$. 
\end{cor}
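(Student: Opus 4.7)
The plan is to directly assemble the four boundedness Propositions \ref{proposition:trib}, \ref{proposition:bulb}, \ref{proposition:l1l2b}, and \ref{proposition:tbu} with the definitions in Section \ref{section:boundedm}. The key observation is that each of the four propositions takes as its hypothesis exactly ``the canonical area of $(S_{g,m},\rho)$ is bounded above by $t$'' (i.e.\ $\rho\in \rm{APos}_3^{t}(S_{g,m})(\mathbf{L})$) and concludes that one of the four quantities $mT(\rho)$, $mD(\rho)$, $mL(\rho)$, $mB(\rho)$ from Definition \ref{definition:bpos} is bounded by a function of $t$ alone (independent of $\rho$).

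So, given $\rho\in \rm{APos}_3^{t}(S_{g,m})(\mathbf{L})$, I would first note that Proposition \ref{proposition:trib} gives $mT(\rho)\leq T(t)$, Proposition \ref{proposition:bulb} gives $mD(\rho)\leq D(t)$, Proposition \ref{proposition:l1l2b} gives $mL(\rho)\leq L(t)$, and Proposition \ref{proposition:tbu} gives $mB(\rho)\leq B(t)$. Setting
\[
C(t):=\max\{T(t),\,D(t),\,L(t),\,B(t)\},
\]
all four quantities $mT(\rho),mD(\rho),mL(\rho),mB(\rho)$ are simultaneously bounded above by $C(t)$, and hence $\rho\in \rm{Pos}_3^{C(t)}(S_{g,m})(\mathbf{L})$ by Definition \ref{definition:bpos}. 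Passing to the quotient by $\rm{Mod}(S_{g,m})$ (which is well-defined by Goldman--Labourie) gives the desired inclusion $\mathcal{AH}^{t}(S_{g,m})(\mathbf{L})\subset \mathcal{H}^{C(t)}(S_{g,m})(\mathbf{L})$.

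There is essentially no obstacle here: all of the work has already been done in the four propositions of Subsections \ref{subsection:abandb} and \ref{subsection:twist}. The only point that warrants a short remark is that the quotient by the mapping class group is compatible with both bounded subsets, which follows from the fact that $mT,mD,mL,mB$ and the canonical area are all $\rm{Mod}(S_{g,m})$-invariant: the canonical area is intrinsic to the convex $\mathbb{RP}^2$ surface (Definition \ref{definition:carea}), and $mT,mD,mL,mB$ are defined by taking maxima over all ideal triangulations, all embedded quadrilaterals, and all non-peripheral simple curves, so any mapping class acts merely by permuting the set over which the maximum is taken. Thus both $\rm{APos}_3^{t}(S_{g,m})(\mathbf{L})$ and $\rm{Pos}_3^{C(t)}(S_{g,m})(\mathbf{L})$ are mapping class group invariant, and the inclusion descends to the moduli spaces.

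One might further note that if Conjectures \ref{conj:D} and \ref{conj:L} were resolved, then $D(t)$ and $L(t)$ would be polynomial, and since $T(t)$ and $B(t)$ are already polynomial, the function $C(t)$ would be a polynomial in $t$, upgrading the inclusion from merely comparable to polynomially comparable in the sense of the preceding definition; this is the form needed later for volume estimates.
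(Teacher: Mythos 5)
Your proof is correct and matches the paper's intended argument exactly: the paper simply states that Propositions \ref{proposition:trib}, \ref{proposition:bulb}, \ref{proposition:l1l2b}, and \ref{proposition:tbu} imply the corollary, which is precisely your assembly of the four bounds via $C(t)=\max\{T(t),D(t),L(t),B(t)\}$. Your additional remarks on mapping class group invariance and on Conjectures \ref{conj:D}, \ref{conj:L} upgrading $C(t)$ to a polynomial also match what the paper states immediately after the corollary.
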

Moreover, if Conjectures \ref{conj:D} and \ref{conj:L} are true, the above function $C(t)$ is polynomial of $t$.

After the work of Benoist \cite{Ben03}, there are many other subsets of $\mathcal{H}(S_{g,m})(\mathbf{L})$ that provide exhaustions. Let us recall the following projective invariant first. 
\begin{defn}\cite[Definition 5.11]{Ben03}
The {\em harmonic quadruplet} is a cyclically ordered quadruplet $(a,b,c,d)\in \partial \Omega_\rho$ such that $\overline{ac}$, the tangent line $b^*$ at $b$ and the tangent line $d^*$ at $d$ cross the same point, denoted by $y$. Let $x=\overline{ac}\cap \overline{bd}$. The {\em cross ratio} of the harmonic quadruplet is 
\[\psi(a,b,c,d):=\frac{|xc|}{|ax|}\cdot \frac{|ay|}{|cy|}.\]
\end{defn}

\begin{remark}
\label{remark:triharm}
The point $a$ is determined by the line crossing $y=b^*\cap d^*$ and $c$. Thus any ordered triple $(b,c,d)$ determines the harmonic quadruplet $(a,b,c,d)$. Hence, like the triple ratio, the function $\psi(a,b,c,d)$ is also a projective invariant of ordered triple of points. The two functions are closely related to each other.
\end{remark}

\begin{example}[Other subsets providing exhaustions]
\label{example:exhaustion}
Let us consider the collection of these $\rho\in \mathcal{H}(S_{g,m})(\mathbf{L})$ such that:
\begin{enumerate}
\item the canonical area of any ideal triangle with respect to $\rho$ is bounded above by $t$, denoted by $A^t(S_{g,m})(\mathbf{L})$;
\item the canonical convex domain $\Omega_\rho$ is $t$-hyperbolic, denoted by $B^t(S_{g,m})(\mathbf{L})$;
\item the canonical convex domain $\Omega_{\rho}$ is derivative $t$-quasisymmetrically convex, denoted by $C^t(S_{g,m})(\mathbf{L})$;
\item the boundary $\partial \Omega_{\rho}$ of the canonical convex domain is $t$-H\"older, denoted by $D^t(S_{g,m})(\mathbf{L})$;
\item the boundary $\partial \Omega_{\rho}$ of the canonical convex domain is $t$-convex, denoted by $E^t(S_{g,m})(\mathbf{L})$;
\item the maximal of the logs of the cross ratios of all the harmonic quadruplets is bounded above by $t$, denoted by $F^t(S_{g,m})(\mathbf{L})$;
\item the function $mT(\rho)$ in Definition \ref{definition:bpos} is bounded above by $t$, denoted by $G^t(S_{g,m})(\mathbf{L})$.
\end{enumerate}
\end{example}

\begin{remark}
\label{remark:comparable}
Some qualitative results among these subsets are known, but very few quantitative results are known.
\begin{enumerate}
\item 
Obviously, the subset $A^t(S_{g,m})(\mathbf{L})$ is polynomially comparable to $\mathcal{AH}^{t}(S_{g,m})(\mathbf{L})$.
\item By definition, we have $\mathcal{H}^t(S_{g,m})(\mathbf{L})\subset G^t(S_{g,m})(\mathbf{L})$.
\item By Proposition \ref{proposition:trib}, we have $A^t(S_{g,m})(\mathbf{L})\subset G^{T(t)}(S_{g,m})(\mathbf{L})$ where $T(t)$ is a polynomial of $t$.
\item The subset $A^t(S_{g,m})(\mathbf{L})$ is comparable to $B^t(S_{g,m})(\mathbf{L})$ by \cite[Theorem 1]{CVV08}. 
\item By \cite[Proposition 3.2]{Ben03}, the subset $B^t(S_{g,m})(\mathbf{L})$ is comparable to $F^t(S_{g,m})(\mathbf{L})$. 
\item By \cite[Proposition 6.6]{Ben03}, the subset $B^t(S_{g,m})(\mathbf{L})$ is comparable to $C^t(S_{g,m})(\mathbf{L})$. 
\item By \cite[Lemma 4.9]{Ben03}, we have
\[C^t(S_{g,m})(\mathbf{L})\subset D^{\alpha(t)}(S_{g,m})(\mathbf{L}),\;\;C^t(S_{g,m})(\mathbf{L})\subset E^{\beta(t)}(S_{g,m})(\mathbf{L})\] where
\[\alpha(t)=1+\log_2(1+t^{-1}),\;\;\beta(t)=1+\log_2(1+t).\] 
\end{enumerate}
\end{remark}

\begin{prop}
\label{proposition:bencom}
For $\mathbf{L}\in \mathbb{R}_{>0}^{2m}$, the following subsets of $\mathcal{H}(S_{g,m})(\mathbf{L})$ are comparable to each other:
\[A^t(S_{g,m})(\mathbf{L}), \;\; B^t(S_{g,m})(\mathbf{L}),\;\;C^t(S_{g,m})(\mathbf{L}), \;\;\]
\[ F^t(S_{g,m})(\mathbf{L}),\;\;G^t(S_{g,m})(\mathbf{L}), \;\; \mathcal{H}^t(S_{g,m})(\mathbf{L}),\;\;\mathcal{AH}^{t}(S_{g,m})(\mathbf{L}).\]
\end{prop}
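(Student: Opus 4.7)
The plan is to exploit the fact that ``comparable'' is an equivalence relation on exhaustions of $\mathcal{H}(S_{g,m})(\mathbf{L})$ (reflexivity is trivial, symmetry is built into the definition, and transitivity holds because the composition of two polynomial bounds is polynomial). It therefore suffices to connect each of the seven subsets in the list to one fixed member, say $\mathcal{AH}^{t}(S_{g,m})(\mathbf{L})$, via a finite chain of pairwise comparabilities.

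First I would harvest the comparabilities that are already recorded in Remark \ref{remark:comparable}. Items (1), (4), (5), (6) there give the mutual comparability of $A^t$, $B^t$, $C^t$, $F^t$, and $\mathcal{AH}^t$, all essentially quoting results of Benoist and Colbois--Vernicos--Verovic in our setting. This takes care of five of the seven subsets in one stroke. To insert the remaining two, I would combine the trivial inclusion $\mathcal{H}^t \subset G^t$ (Remark \ref{remark:comparable}(2)), the inclusion $A^t \subset G^{T(t)}$ with $T(t)$ polynomial (Remark \ref{remark:comparable}(3), coming from Proposition \ref{proposition:trib}), and Corollary \ref{corollary:AHH} which supplies $\mathcal{AH}^t \subset \mathcal{H}^{C(t)}$. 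These three ingredients give one direction of comparability for both $\mathcal{H}^t$ and $G^t$ against the already-linked block.

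The remaining directions are $\mathcal{H}^t \subset \mathcal{AH}^{f(t)}$ and $G^t \subset \mathcal{AH}^{g(t)}$. For the first, I would fix a pants decomposition $\mathcal{P}$ and a subordinate ideal triangulation $\mathcal{T}$, then read the marked representation through the Fock--Goncharov coordinates $(T(\mathbf{\Delta}), D_i(\overrightarrow{\mathbf{e}}))$ of Proposition \ref{prop:confpara}. The four bounds built into $\mathcal{H}^t$ are tailored to control these coordinates: $mT \leq t$ bounds the triple ratios, $mD \leq t$ bounds the bulging inside each pair of pants, $mB \leq t$ bounds the twist-bulging parameters along each curve of $\mathcal{P}$, while $mL \leq t$ together with the fixed boundary data $\mathbf{L}$ controls the simple root length of each pants curve. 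Hence a fundamental domain for $\rm{Mod}(S_{g,m})$ lands in a compact set depending only on $t$ and $\mathbf{L}$, and the canonical area, being a continuous function of the coordinates, is bounded by some $f(t)$.

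The main obstacle is the second reverse direction $G^t \subset \mathcal{AH}^{g(t)}$, because the triple ratio alone does not see bulging or twisting. The saving point is that the hypothesis on $G^t$ is uniform over \emph{all} ideal triangulations: by choosing triangulations whose edges are sheared along or wound around a curve of $\mathcal{P}$, the geometry detects unbounded bulging or twisting as a degeneration of an adjacent ideal triangle whose triple ratio then escapes every bound $t$. Formalizing this requires running the geometric argument of Proposition \ref{proposition:trib} (via the $p$-area and Benz\'ecri comparability used in \cite{AC18}) in reverse, together with the triangle-shape rigidity visible in Figure \ref{figure:triple}. This is the step I expect to require the most technical care; once it is in place, the containment $G^t \subset \mathcal{AH}^{g(t)}$ closes the last loop and the entire list of subsets becomes pairwise comparable.
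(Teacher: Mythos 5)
Your reduction is sound and matches the paper's structure: by transitivity you only need to link each of the seven subsets into one chain, the block $\{A^t, B^t, C^t, F^t, \mathcal{AH}^t\}$ is handled entirely by Remark \ref{remark:comparable} items (1), (4), (5), (6), and the forward inclusions $\mathcal{H}^t \subset G^t \subset \cdots$ plus $\mathcal{AH}^t \subset \mathcal{H}^{C(t)}$ (Remark items (2), (3), Corollary \ref{corollary:AHH}) come for free. The paper agrees with you that after all this bookkeeping, the only nontrivial thing left is to show that $G^t$ is comparable to the block (it explicitly reduces to showing $G^t$ comparable to $B^t$).

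But at precisely that point your argument has a genuine gap. For the reverse containment $G^t \subset B^{c(t)}$ (equivalently $G^t \subset \mathcal{AH}^{g(t)}$), you propose to ``run the $p$-area argument of \cite{AC18} in reverse'' and to detect bulging/twisting degenerations through adversarial choices of ideal triangulation, and you explicitly flag this as ``the step I expect to require the most technical care.'' That is not a proof; it is a program, and it is not at all obvious that it closes. The assertion that unbounded bulging or twisting must force some triple ratio in \emph{some} ideal triangulation to diverge is plausible but requires a quantitative argument linking the degeneration to the triple-ratio parameter, and you do not supply it. The paper resolves this step in one line by invoking Benoist's \cite[Proposition 2.10b]{Ben03}: uniformly bounded triple ratios over all ideal triangles directly imply a Gromov-hyperbolicity bound, which places $G^t \subset B^{c(t)}$. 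Without this (or some substitute of equal strength) your last loop does not close.

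A secondary remark: once $G^t \subset \mathcal{AH}^{g(t)}$ is known, your separate argument that $\mathcal{H}^t \subset \mathcal{AH}^{f(t)}$ via bounded Fock--Goncharov coordinates is superfluous, because $\mathcal{H}^t \subset G^t$ already follows from the definitions. As a standalone step it is also under-justified: you would need the four bounds defining $\mathcal{H}^t$ to cut out a genuinely \emph{compact} fundamental domain in the coordinate chart (not merely a bounded-in-some-directions slab), and continuity of the canonical area in the marked representation, neither of which you establish. Dropping that discussion and substituting the citation to Benoist's Proposition 2.10b would turn your outline into a correct proof essentially identical to the paper's.
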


\begin{proof}
By Remark \ref{remark:comparable} and Corollary \ref{corollary:AHH}, it is enough to show that $G^t(S_{g,m})(\mathbf{L})$ is comparable to $B^t(S_{g,m})(\mathbf{L})$. Since $\mathbf{L}\in \mathbb{R}_{>0}^{2m}$, we are working on the canonical domains $\Omega_\rho$ that are strictly convex and the $\mathbb{RP}^2$ surfaces that are part of a cocompact quotient. To show that $B^t(S_{g,m})(\mathbf{L})\subset G^{C(t)}(S_{g,m})(\mathbf{L})$ for some $C(t)$, we replace the cross ratios of the harmonic quadruplets in the proof of \cite[Proposition 3.2]{Ben03} by the triple ratios, the same argument still works.  To show that $G^t(S_{g,m})(\mathbf{L})\subset B^{c(t)}(S_{g,m})(\mathbf{L})$ for some $c(t)$, it is a direct consequence of \cite[Proposition 2.10b]{Ben03}.
\end{proof}

For further research, we make the following quantitative conjecture for the subsets in Example \ref{example:exhaustion}. 
\begin{conjecture}
The following subsets of $\mathcal{H}(S_{g,m})(\mathbf{L})$ are polynomially comparable to each other:
\[A^t(S_{g,m})(\mathbf{L}), \;\; B^t(S_{g,m})(\mathbf{L}),\;\;C^{\log t}(S_{g,m})(\mathbf{L}),\;\;D^{\frac{t}{t-1}}(S_{g,m})(\mathbf{L}),\;\;E^t(S_{g,m})(\mathbf{L}), \;\;\]
\[ F^t(S_{g,m})(\mathbf{L}),\;\;G^t(S_{g,m})(\mathbf{L}), \;\; \mathcal{H}^t(S_{g,m})(\mathbf{L}),\;\;\mathcal{AH}^{t}(S_{g,m})(\mathbf{L}).\]
\end{conjecture}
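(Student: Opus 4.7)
The plan is to upgrade Proposition~\ref{proposition:bencom}, whose comparisons are only qualitative, into polynomial ones, and simultaneously to insert the two missing subsets $D^{t/(t-1)}$ and $E^t$ into the chain. My strategy is to build a directed cycle of polynomial containments among the nine subsets that visits each of them at least once; by transitivity, polynomial comparability of consecutive pairs then gives polynomial comparability of every pair. The natural hubs for this cycle are $\mathcal{AH}^t$ and $B^t$, which sit on opposite ends of most existing qualitative proofs, with $\mathcal{H}^t$, $G^t$ and $A^t$ on the ``projective invariants'' side of the cycle and $C^t$, $D^{t/(t-1)}$, $E^t$, $F^t$ on the ``shape of $\partial\Omega_\rho$'' side.

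The first main ingredient is polynomial versions of Propositions~\ref{proposition:bulb} and \ref{proposition:l1l2b}, that is, Conjectures~\ref{conj:D} and \ref{conj:L}. For Conjecture~\ref{conj:D}, I would compute the canonical area of a maximally bulged ideal quadrilateral directly in the projective coordinates of Figure~\ref{figure:edge}: as the bulging parameter grows, the vertex $z$ drifts toward $x^{*}\cap y^{*}$, and the Blaschke--Hilbert comparison from \cite{BH13} should allow one to lower bound the canonical area by a positive power of the bulging invariant, and thereby invert this into a polynomial upper bound on the bulging invariant in terms of the area. For Conjecture~\ref{conj:L}, I would revisit \cite[Proposition 6.6]{Ben03} and \cite[Corollary 5.3]{Ben04}, tracking explicit constants through the chain ``$\delta$-hyperbolic implies derivative $H$-quasisymmetrically convex implies $\alpha$-H\"older and $\beta$-convex implies polynomial control on $\ell_2/\ell_1$''.

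To incorporate $D^{t/(t-1)}$ and $E^t$ into the cycle, I would invert the logarithmic relations $\alpha(t)=1+\log_2(1+t^{-1})$ and $\beta(t)=1+\log_2(1+t)$ from \cite[Lemma 4.9]{Ben03}: the substitution $s=t/(t-1)$ rewrites $\alpha$ in the rational form appearing in the conjecture, while $\beta$ becomes compatible with $E^t$ after a suitable reparameterization. A quantitative converse, namely that $\alpha$-H\"older together with $\beta$-convex implies derivative $H(\alpha,\beta)$-quasisymmetric convexity with $H$ polynomial in $1/(\alpha-1)$ and $\beta-1$, should follow from a Taylor expansion of $\partial\Omega_\rho$ around a boundary point, using the uniform strict convexity provided by the cocompact quotient (available since $\mathbf{L}\in\mathbb{R}_{>0}^{2m}$).

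The main obstacle, I expect, is Conjecture~\ref{conj:L}: Benoist's proof in \cite[Lemma 4.9]{Ben03} passes through a compactness/implicit function argument that does not obviously yield polynomial control, and promoting it to a polynomial estimate seems to require a genuinely new bound on the modulus of continuity of $\partial\Omega_\rho$ in terms of the hyperbolicity constant. A second delicate point is closing the loop via $G^t\subset A^{q(t)}$ for polynomial $q$: a uniform triple-ratio bound controls the shape of each ideal triangle, but its canonical area also depends on the edge-function parameters, so one needs an a~priori polynomial bound on these edge-functions coming from the $t$-boundedness of $mT$, $mD$, $mL$, $mB$ together with $\mathbf{L}$, and making this quantitative along the Darboux coordinates of \cite{SWZ17,SZ17} appears to be the most technical step.
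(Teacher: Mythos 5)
The statement you are proving is explicitly labeled as a \emph{conjecture} in the paper; there is no proof in the paper to compare against. The paper proves only the weaker qualitative statement (Proposition~\ref{proposition:bencom}, which establishes comparability, not \emph{polynomial} comparability, and omits $D^{t}$ and $E^t$), and explicitly isolates Conjectures~\ref{conj:D} and~\ref{conj:L} as the missing quantitative inputs. Your plan correctly identifies this structure: you propose upgrading Proposition~\ref{proposition:bencom}, you flag Conjectures~\ref{conj:D} and~\ref{conj:L} as the first obstacles, and you flag the quantitative version of $G^t\subset A^{q(t)}$ as another. None of those steps is carried out, and you say so; this is an honest sketch, not a proof, which is consistent with the fact that the paper itself leaves the statement open.

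One substantive correction to your sketch. You claim that the substitution $s=t/(t-1)$ ``rewrites $\alpha$ in the rational form appearing in the conjecture.'' It does not: the paper's Remark~\ref{remark:comparable} records $\alpha(t)=1+\log_2(1+t^{-1})$, and no reparameterization of the domain variable eliminates the $\log_2$. The exponent $t/(t-1)$ in the displayed conjecture most plausibly comes from the standard duality between H\"older exponent and convexity exponent of a convex curve and its dual (an $\alpha$-H\"older boundary is dual to an $\alpha/(\alpha-1)$-convex one and vice versa), not from inverting Benoist's $\alpha(t)$. So the part of your plan that inserts $D^{t/(t-1)}$ and $E^t$ into the cycle needs to be reorganized around this duality rather than around a change of variable in $\alpha(t)$. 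The rest of your plan is a reasonable road map, but every step that matters — polynomial bulging bound, polynomial $\ell_2/\ell_1$ bound, polynomial control of edge functions in terms of $mT,mD,mL,mB,\mathbf L$, and a quantitative converse to quasisymmetric convexity — remains to be proved and is of the same order of difficulty as the conjecture itself.
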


\begin{remark}
By \cite[Proposition 3.4]{BH13}, the Blaschke metric is uniformly comparable to the Hilbert metric. Thus the subset of $\mathcal{H}(S_{g,m})(\mathbf{L})$ such that the Blaschke metric canonical areas are bounded above by $t$ is comparable to $\mathcal{AH}^t(S_{g,m})(\mathbf{L})$. By Labourie \cite{Lab07} and Loftin \cite{Lof01}, the moduli space of unmarked convex $\mathbb{RP}^2$ structures can be identified with the vector bundle over the moduli space of Riemann surface where each fiber is the vector space of holomorphic cubic differentials. Then one can define another subset with respect to the norm defined on all the fibers associated to the Blaschke metric that is comparable to $\mathcal{AH}^t(S_{g,m})(\mathbf{L})$. 
\end{remark}


\section{Goldman symplectic volume form}
In this section, using the generalized Darboux coordinate system obtained in \cite{SWZ17,SZ17}, we express the Goldman symplectic volume form on $\rm{Pos}_3(S_{g,m})(\mathbf{L})$ in a natural way.  
\subsection{Atiyah--Bott--Goldman symplectic form}
Let $\mathcal{R}_{G,S}=\operatorname{Hom}(\pi_1(S),G)/G$ be the space of representations of fundamental group of closed surface $S$ into Lie group $G$.
In \cite{AB83}, Atiyah and Bott introduced a natural symplectic form $\omega$ when $G$ is compact using de Rham cohomology. Later on, Goldman \cite{G84} generalized the symplectic form $\omega$ for non-compact Lie groups using group cohomology and showed that $\omega$ is a multiple of  the Weil--Petersson symplectic form on the Teichm\"uller space of $S$. We call $\omega$ the {\em (Atiyah--Bott--)Goldman symplectic form} for short. The Goldman symplectic form has been extended to the case where the topological surface $S$ has finitely many boundary components with fixed monodromy conjugacy classes in \cite{AM95} \cite{GHJW97} and references therein, even with marked points on the boundary in \cite{FR98}. 

There is a specific natural formula for Weil--Petersson symplectic form $\omega$ on the Teichm\"uller space. Let $\mathcal{T}(S_{g,m})(L_1,\cdots,L_m)$ be the Teichm\"uller space with fixed boundary lengths. Given a pair of pants decomposition $\{\delta_1,\cdots,\delta_{3g-3+m}\}$ of $S_{g,m}$ and the transverse arcs to the pants curves, $\mathcal{T}(S_{g,m})(L_1,\cdots,L_m)$ can be parameterized by $3g-3+m$ length functions $\ell(\delta_i)$ of the pants curves, and $3g-3+m$ twist functions $\theta(\delta_i)$, called the {\em Fenchel-Nielsen coordinates}. In \cite{Wol82,Wol83}, Wolpert provided an explicit description of the Weil--Petersson symplectic form on $\mathcal{T}(S_{g,m})(L_1,\cdots,L_m)$ in terms of the Fenchel--Nielsen coordinates, called the {\em Wolpert's Magic Formula}:
\begin{equation}
\label{equation:wol}
\omega= \sum_{i=1}^{3g-3+m} d \ell(\delta_i) \wedge d \theta(\delta_i).
\end{equation}
The above formula is crucial in \cite{Mir07a} for computing of the volume of moduli space $\mathcal{M}_{g,m}(L_1,\cdots,L_m):=\mathcal{T}(S_{g,m})(L_1,\cdots,L_m)/\rm{Mod}(S_{g,m})$ of Riemann surfaces with fixed boundary lengths with respect to the Weil--Petersson symplectic form $\omega$. 

Now let us consider $\rm{Pos}_3(S_{g,m})(\mathbf{L})$
with fixed simple root lengths on the oriented boundary components $\alpha_1$, $\cdots$, $\alpha_m$. In \cite{Kim99}, using Goldman's parametrization \cite{G90}, Wolpert's Magic Formula~\eqref{equation:wol} was generalized for $\rm{Hit}_3(S_{g,0})$ where some global Darboux coordinates (including the twist parameters and the internal parameters) were corrected in \cite{CJK19}. In \cite[Corollary 8.18]{SZ17}, Wolpert's Magic Formula~\eqref{equation:wol} was generalized for $\rm{Hit}_n(S_{g,0})$ where the global Darboux coordinates are established in \cite[Section 8]{SWZ17}. Note that the Goldman's coordinates are related to Fock--Goncharov's coordinates in \cite{BK18}. Both of these two generalizations of Wolpert's Magic Formula also work for $\rm{Pos}_3(S_{g,m})(\mathbf{L})$. We will use the latter one instead to match up with the projective invariants that we use.

\subsection{Generalized Wolpert's Magic Formula}
We recall the generalized Wolpert's Magic Formula \cite[Corollary 8.18]{SZ17} for future use.

We specify the ideal triangulation $\mathcal{T}$ subordinate to a pants decomposition of $S_{g,m}$. Let us fix an auxiliary hyperbolic structure $\rho_h$ on $S_{g,m}$. Suppose the pairwise non-intersecting oriented simple closed geodesics $\mathcal{P}=\{\delta_1,\cdots,\delta_{3g-3+m}\}$ cut $S_{g,m}$ into $2g-2+m$ pairs of pants $\mathbb{P}=\{P_1,\cdots,P_{2g-2+m}\}$. For each pair of pants $P$ of $\mathbb{P}$, we choose the peripheral group elements $\alpha_P,\beta_P,\gamma_P$ in $\pi_1(P)$ such that $\alpha_P\gamma_P\beta_P=Id$ and $P$ lies to the right of $\alpha_P,\beta_P,\gamma_P$. The inclusion of $P$ into $S_{g,m}$ induces the inclusion of $\pi_1(P)$ into $\pi_1(S_{g,m})$, thus we can view $\alpha_P,\beta_P,\gamma_P$ as elements in $\pi_1(S_{g,m})$. Let $\gamma^+$, $\gamma^-$ be the attracting and repelling fixed points of $\gamma \in \pi_1(S_{g,m})$. The natural projection from the universal cover $\widetilde{S_{g,m}}$ to $S_{g,m}$ is denoted by $\pi$. Then $\pi\{\alpha_P^-,\beta_P^-\}$ is the simple geodesic spiralling towards $\alpha_P$ and $\beta_P$ opposite to the orientation of $\alpha_P$ and $\beta_P$ respectively. In fact, the three simple geodesics $\pi\{\alpha_P^-,\beta_P^-\}$, $\pi\{\beta_P^-,\gamma_P^-\}$ and $\pi\{\gamma_P^-,\alpha_P^-\}$ cut $P$ into two ideal triangles $\pi\{\alpha_P^-,\beta_P^-,\gamma_P^-\}$ and $\pi\{\alpha_P^-,\gamma_P^-,\gamma_P\cdot\beta_P^-\}$. The {\em ideal triangulation} $\mathcal{T}$ is 
\[\mathcal{P} \bigcup \underset{P\in \mathbb{P}}{\bigcup} \left\{ \pi\{\alpha_P^-,\beta_P^-\}, \pi\{\beta_P^-,\gamma_P^-\},\pi\{\gamma_P^-,\alpha_P^-\}\right\}.\]

\begin{figure}[ht]
\centering
\includegraphics[scale=0.35]{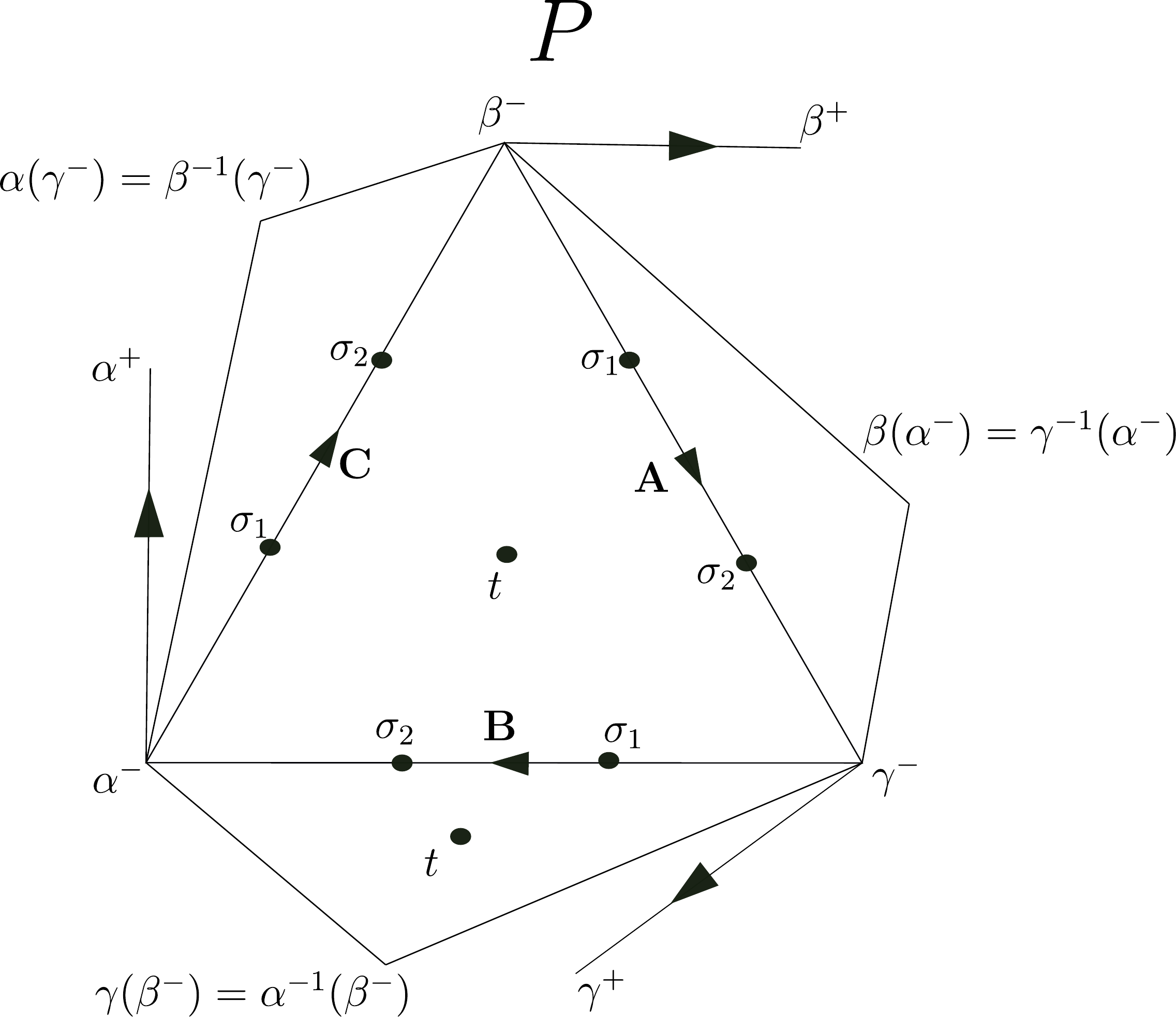}
\small
\caption{$(\gamma^-,\alpha^-,\gamma\cdot\beta^-)$ and $(\gamma^-,\beta^-,\alpha^-)$ form a lift of the pair of pants $P$ with the marking $\alpha\gamma\beta=Id$.}
\label{figure:sl3pp}
\end{figure}

Let $\mathbf{C}_P:=\pi(\alpha_P^-,\beta_P^-)$, $\mathbf{A}_P:=\pi(\beta_P^-,\gamma_P^-)$ and $\mathbf{B}_P:=\pi(\gamma_P^-,\alpha_P^-)$. Then, as in Figure \ref{figure:sl3pp}, $(\gamma_P^-,\alpha_P^-,\gamma_P\cdot\beta_P^-)$ and $(\gamma_P^-,\beta_P^-,\alpha_P^-)$ are two adjacent anticlockwise ordered ideal triangles in the universal cover with a common edge $(\gamma_P^-,\alpha_P^-)$. For any $\rho\in \rm{Pos}_3(S_{g,m})(\mathbf{L})$, there is the canonical $\rho$-equivariant map $\xi_{\rho}$.
By Definition \ref{definition:para}, for $i=1,2$,
\[D_i(\mathbf{B}_P)=D_i\left(\xi_\rho(\gamma_P^-),\xi_\rho(\alpha_P^-),\xi_\rho(\gamma_P\cdot\beta_P^-),\xi_\rho(\beta_P^-)\right).\]
Similarly, for $i=1,2$,
 \[D_i(\mathbf{C}_P)=D_i\left(\xi_\rho(\alpha_P^-),\xi_\rho(\beta_P^-),\xi_\rho(\alpha_P\cdot\gamma_P^-),\xi_\rho(\gamma_P^-)\right),\]
\[D_i(\mathbf{A}_P)=D_i\left(\xi_\rho(\beta_P^-),\xi_\rho(\gamma_P^-),\xi_\rho(\beta_P\cdot\alpha_P^-),\xi_\rho(\alpha_P^-)\right).\]
Let $\mathbf{\Delta}_P:=\pi(\alpha_P^-,\gamma_P^-,\beta_P^-)$ and $\mathbf{\Delta}_P':=\pi(\alpha_P^-,\gamma_P\cdot\beta_P^-,\gamma_P^-)$. Then 
 \[T(\mathbf{\Delta}_P)=T\left(\xi_\rho(\alpha_P^-),\xi_\rho(\gamma_P^-),\xi_\rho(\beta_P^-)\right),\]
 \[T(\mathbf{\Delta}_P')=T\left(\xi_\rho(\alpha_P^-),\xi_\rho(\gamma_P\cdot\beta_P^-),\xi_\rho(\gamma_P^-)\right).\]
\begin{notation}
For any oriented ideal edge $\mathbf{A}$, let
\[\sigma_i(\mathbf{A}):=\log \left(-D_i(\mathbf{A})\right).\]
For any anticlockwise ordered ideal triangle $\mathbf{\Delta}$, let
\[t(\mathbf{\Delta}):=\log T(\mathbf{\Delta}).\]
\end{notation}
By \cite[Proposition 13]{BH14}, we have
\begin{lem}
\label{lem:BD}
\[\ell_1(\alpha_P)=\sigma_1(\mathbf{C}_P)+\sigma_2(\mathbf{B}_P),\;\;\ell_2(\alpha_P)=\sigma_2(\mathbf{C}_P)+t(\mathbf{\Delta}_P)+\sigma_1(\mathbf{B}_P)+t(\mathbf{\Delta}_P'),\]
\[\ell_1(\beta_P)=\sigma_1(\mathbf{A}_P)+\sigma_2(\mathbf{C}_P),\;\;\ell_2(\beta_P)=\sigma_2(\mathbf{A}_P)+t(\mathbf{\Delta}_P)+\sigma_1(\mathbf{C}_P)+t(\mathbf{\Delta}_P'),\]
\[\ell_1(\gamma_P)=\sigma_1(\mathbf{B}_P)+\sigma_2(\mathbf{A}_P),\;\;\ell_2(\gamma_P)=\sigma_2(\mathbf{B}_P)+t(\mathbf{\Delta}_P)+\sigma_1(\mathbf{A}_P)+t(\mathbf{\Delta}_P').\]
\end{lem}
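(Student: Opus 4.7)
The plan is to verify the identities by an explicit computation of peripheral monodromy in Fock--Goncharov coordinates, following the ``snake'' matrix-product scheme originating in \cite{FG06} and carried out for this precise normalization in \cite{BH14}. Since every quantity in the statement is projectively invariant and depends only on the canonical map $\xi_\rho$ at the four vertices $\alpha_P^-, \beta_P^-, \gamma_P^-, \gamma_P \cdot \beta_P^-$, I would begin by normalizing: choose a basis $(e_1,e_2,e_3)$ of $\mathbb{R}^3$ adapted to the flag $\xi_\rho(\alpha_P^-)$, and use the remaining projective freedom to place $\xi_\rho(\beta_P^-)$ and $\xi_\rho(\gamma_P^-)$ in a standard position encoded by $e^{t(\mathbf{\Delta}_P)}$. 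Since $\alpha_P^-$ is the repelling fixed point of $\rho(\alpha_P)$, by Definition~\ref{definition:cancho} the matrix $\rho(\alpha_P)$ is upper triangular in this basis with diagonal $(\lambda_3,\lambda_2,\lambda_1)$, so $\ell_i(\alpha_P)$ is read off directly from ratios of diagonal entries.

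Next I would decompose $\rho(\alpha_P)$ as a product of elementary matrices, one per edge and triangle traversed by a dual path realizing the $\alpha_P$-translation on the universal cover. A natural choice crosses $\mathbf{B}_P$, then $\mathbf{\Delta}_P'$, then $\mathbf{C}_P$, then $\mathbf{\Delta}_P$, returning to the $\alpha_P$-translate of the reference flag. Each edge contributes a diagonal shear matrix whose entries are monomials in $e^{\sigma_1}, e^{\sigma_2}$ on that edge, while each triangle contributes a unipotent matrix whose nontrivial entry is a monomial in $e^{t(\mathbf{\Delta})}$. Multiplying out and extracting the diagonal yields $\lambda_1/\lambda_2$ and $\lambda_2/\lambda_3$ as explicit combinations of the parameters, and taking logarithms gives the two formulas.

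The reason $\ell_1(\alpha_P)$ sees only edge parameters --- specifically $\sigma_1(\mathbf{C}_P)$ on the edge leaving $\alpha_P^-$ and $\sigma_2(\mathbf{B}_P)$ on the edge arriving at $\alpha_P^-$ --- is exactly the source/target asymmetry built into Definition~\ref{definition:edgefun}: both of these $\sigma$'s encode the flag line $F^{(2)}$ at $\alpha_P^-$, which governs the top simple root, whereas triangle invariants act unipotently on $e_1$ and therefore do not affect the $\lambda_1/\lambda_2$ ratio. The complementary indices $\sigma_2(\mathbf{C}_P), \sigma_1(\mathbf{B}_P)$, together with both triangle parameters $t(\mathbf{\Delta}_P), t(\mathbf{\Delta}_P')$ (each triangle is crossed once, and each contributes to the $(2,2)$-entry of the product), then produce $\ell_2(\alpha_P)$. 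The formulas for $\beta_P$ and $\gamma_P$ follow from the cyclic symmetry $(\alpha_P,\beta_P,\gamma_P) \to (\beta_P,\gamma_P,\alpha_P)$, $(\mathbf{A}_P,\mathbf{B}_P,\mathbf{C}_P) \to (\mathbf{B}_P,\mathbf{C}_P,\mathbf{A}_P)$, which is manifest in the statement.

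The main obstacle is bookkeeping of signs and orientations: one must match the convention of Definition~\ref{definition:edgefun} (where $D_i$ is negative in the positive regime, so $\sigma_i = \log(-D_i)$ is real) with the sign choices in the elementary matrices, and verify in particular that the two triangle contributions to $\ell_2(\alpha_P)$ enter additively rather than cancelling. Once this combinatorial matching is in place, the underlying calculation reduces to multiplying four $3 \times 3$ matrices and the identities of \cite[Proposition 13]{BH14} fall out.
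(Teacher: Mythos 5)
The paper offers no argument of its own here: the lemma is stated with the single citation to \cite[Proposition 13]{BH14}, and your sketch is precisely a reconstruction of what that cited proposition does --- normalize the flag at $\alpha_P^-$, express $\rho(\alpha_P)$ as a product of elementary edge and triangle matrices in the Fock--Goncharov snake decomposition, multiply, and read the simple root lengths off the diagonal; the other two rows follow by the visible cyclic symmetry. So the route matches the paper's.

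One caveat worth flagging in your heuristic explanation of the index pattern: you attribute the absence of triangle parameters in $\ell_1(\alpha_P)$ to the triangle factors ``acting unipotently on $e_1$.'' Taken at face value, a unipotent factor leaves \emph{all} eigenvalue ratios unchanged, so this would equally well predict the absence of $t(\mathbf{\Delta}_P),t(\mathbf{\Delta}_P')$ from $\ell_2(\alpha_P)$ --- which is false. The actual asymmetry (triangle invariants entering $\lambda_2/\lambda_3$ but not $\lambda_1/\lambda_2$) is not a consequence of any single factor being unipotent; it only falls out of the full $3\times 3$ product once the specific off-diagonal placement of the triple-ratio entry interacts with the diagonal edge factors. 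You correctly defer this to the bookkeeping step, but the stated justification should not be read as a shortcut around the calculation. Apart from that, the plan is sound and is the argument the paper is implicitly relying on via \cite{BH14}.
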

In \cite[Corollary 8.18]{SZ17}, the generalized Wolpert's Magic Formula of $\omega$ is composed by two parts. The first part is related to the $2g-2+m$ pairs of pants $\mathbb{P}$ that can be described by the above projective invariants. The second part is related to the $3g-3+m$ pants curves $\mathcal{P}$ where we use certain generalized length functions and certain generalized twist functions. The generalized length functions are linear combinations of $\ell_1$ and $\ell_2$. Up to scalar, the generalized twist functions are the {\em symplectic closed edge invariants} which is defined in \cite[Section 5.2]{SWZ17}, with respect to a set of transverse arcs to $\mathcal{P}$ (called the bridge system $\mathcal{J}$ there). We want to use the twist flows introduced in Section \ref{subsection:twist} instead, which can be done by a linear transformation.

\begin{thm}\cite[Corollary 8.18]{SZ17} 
\label{theorem:Darboux}
For $\rm{Pos}_3(S_{g,m})(\mathbf{L})$, let $\mathcal{T}$ be an ideal triangulation $\mathcal{T}$ subordinate to a pants decomposition $\mathcal{P}$ and a set of transverse arcs to $\mathcal{P}$ (bridge system). Let $\mathcal{P}=\{\delta_1,\cdots,\delta_{3g-3+m}\}$ be the set of disjoint oriented pants curves in the pants decomposition. Let $\mathbb{P}$ be the collection of pairs of pants. The Goldman symplectic form
\begin{equation*}
\begin{aligned}
&\omega=\sum_{P\in \mathbb{P}} d(H(\mathcal{E}_P))\wedge d(H(\mathcal{H}_P))+ \sum_{j=1}^{3g-3+m} d \ell_1(\delta_j)\wedge d\theta_1(\delta_j) + \sum_{j=1}^{3g-3+m} d\ell_2(\delta_j)\wedge d\theta_2(\delta_j)
\\&=\sum_{P\in \mathbb{P}} d(H(\mathcal{E}_P))\wedge d(H(\mathcal{H}_P))+ \sum_{j=1}^{3g-3+m} d \ell(\delta_j)\wedge d\frac{\theta_1+\theta_2}{2}(\delta_j) 
\\&+ \sum_{j=1}^{3g-3+m} d\frac{\ell_2-\ell_1}{2}(\delta_j)\wedge d(\theta_2-\theta_1)(\delta_j),
\end{aligned}
\end{equation*}
where by \cite[Theorem 8.22]{SWZ17}
\begin{equation*}
\begin{aligned}
&H(\mathcal{E}_P)
=-\frac{1}{6}(2\sigma_1(\mathbf{A}_P)+\sigma_2(\mathbf{A}_P)+2\sigma_1(\mathbf{B}_P)\
+\sigma_2(\mathbf{B}_P)+2\sigma_1(\mathbf{C}_P)+\sigma_2(\mathbf{C}_P)
\\&+3t(\mathbf{\Delta}_P)+3t(\mathbf{\Delta}_P')),
\end{aligned}
\end{equation*} 
and
\begin{equation*}
H(\mathcal{H}_P)=-t(\mathbf{\Delta}_P)+t(\mathbf{\Delta}_P')+C_P
\end{equation*} 
with $C_P$ being a linear combination of $\ell_1$ and $\ell_2$ of oriented curves in $\mathcal{P}$.
\end{thm}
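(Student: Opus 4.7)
The overall plan is to realize $\omega$ concretely on the Fock--Goncharov atlas of Proposition \ref{prop:confpara} and then collect terms along the pants decomposition. First, restrict the global chart on $\mathcal{X}_3(S_{g,m})$ indexed by $\mathcal{T}$ (the triangle invariants $t(\mathbf{\Delta})$ and edge invariants $\sigma_1(\overrightarrow{\mathbf{e}}), \sigma_2(\overrightarrow{\mathbf{e}})$) to the symplectic leaf $\mathrm{Pos}_3(S_{g,m})(\mathbf{L})$ cut out by the linear relations of Lemma \ref{lem:BD}. A dimension count ($2(2g-2+m) + 2\cdot 2(3g-3+m) = 16g-16+6m$) matches the pants/curve split on the right-hand side, so the task reduces to verifying that the claimed $2$-form equals $\omega$ as a bivector inverse to the Fock--Goncharov Poisson tensor.

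Second, I would inject the twist-flow data from Section \ref{subsection:twist}. By Goldman's theorem each $\theta_i(\delta_j)$ is the Hamiltonian flow of $\ell_i(\delta_j)$ with respect to $\omega$, so the curve contributions $d\ell_i(\delta_j)\wedge d\theta_i(\delta_j)$ are forced once one checks independence: the twist flows along disjoint curves commute, and the pair of twists along a common curve is symplectically orthogonal to everything else supported away from that curve. For the pants piece, inside each $P$ the only parameters left after fixing the three peripheral simple root lengths are $t(\mathbf{\Delta}_P)$ and $t(\mathbf{\Delta}_P')$, together with edge invariants that are linearly determined by the boundary data via Lemma \ref{lem:BD}. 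Compute the log-canonical Fock--Goncharov bracket of these two triangle invariants with each other and with all external variables; one then verifies that the explicit linear combinations $H(\mathcal{E}_P)$ and $H(\mathcal{H}_P)$ form a canonically conjugate pair, commute with the internal variables of every other pair of pants, and commute with each $\ell_i(\delta_j)$. Since both sides of the claimed identity act as $\omega$ on a spanning set of vectors coming from (i) internal pants deformations and (ii) twist flows along $\mathcal{P}$, they are equal.

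The main obstacle is the precise identification of the twist parameters $\theta_1(\delta_j),\theta_2(\delta_j)$, defined intrinsically from the eigenvectors of $\rho(\delta_j)$, with the symplectic closed edge invariants of \cite{SWZ17} that naturally appear when one inverts the Fock--Goncharov bracket along a bridge system of transverse arcs to $\mathcal{P}$. Matching them requires a careful linear transformation depending on the choice of bridge, and an accounting of the shift $C_P$ in $H(\mathcal{H}_P)$ that absorbs contributions from pants curve shearings into the pants factor, so that the pants block is globally well-defined and independent of bridges crossing $\partial P$. Once this matching is pinned down (as in \cite[Section 8]{SWZ17}), the equivalent "length/average twist plus simple-root-gap/twist-bulging" rewriting follows from the elementary change of variables $(\ell_1,\ell_2,\theta_1,\theta_2)\mapsto(\ell,\tfrac{\ell_2-\ell_1}{2},\tfrac{\theta_1+\theta_2}{2},\theta_2-\theta_1)$, whose Jacobian preserves the symplectic pairing.
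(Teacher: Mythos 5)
This theorem is not proved in the paper: it is stated and cited verbatim from \cite[Corollary 8.18]{SZ17} and \cite[Theorem 8.22]{SWZ17}, so there is no internal proof to compare against. That said, your sketch is broadly in the spirit of how \cite{SWZ17,SZ17} construct the Darboux frame: one works in the Fock--Goncharov log-canonical chart for the ideal triangulation $\mathcal{T}$, identifies the twist vector fields as Hamiltonian for the $\ell_i$, and matches them (after a linear change recorded in a bridge system) with the symplectic closed edge invariants. The final change of variables $(\ell_1,\ell_2,\theta_1,\theta_2)\mapsto(\ell,\tfrac{\ell_2-\ell_1}{2},\tfrac{\theta_1+\theta_2}{2},\theta_2-\theta_1)$ is correctly symplectic and does yield the equivalence between the two displayed expressions.

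There is, however, a concrete error in the pants block. After fixing the three peripheral simple root lengths of a pair of pants $P$, the two triangle invariants $t(\mathbf{\Delta}_P)$ and $t(\mathbf{\Delta}_P')$ are \emph{not} both free and the edge invariants are \emph{not} linearly determined by the boundary data. Summing the six relations of Lemma~\ref{lem:BD} forces
\[
t(\mathbf{\Delta}_P)+t(\mathbf{\Delta}_P')=\tfrac{1}{3}\bigl(\ell_2(\alpha_P)+\ell_2(\beta_P)+\ell_2(\gamma_P)-\ell_1(\alpha_P)-\ell_1(\beta_P)-\ell_1(\gamma_P)\bigr),
\]
so only the difference $t(\mathbf{\Delta}_P')-t(\mathbf{\Delta}_P)$ is free, and the linear system for the $\sigma_i$'s has a residual one-parameter family of solutions; it is precisely this residual direction that becomes the conjugate variable $X_P$ (equivalently $H(\mathcal{E}_P)$) in Proposition~\ref{proposition:gsvf}. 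So the spanning set you use to claim the two $2$-forms agree does not in fact span a complement to the boundary constraints; the correct internal pair is $(H(\mathcal{E}_P),H(\mathcal{H}_P))$, not $(t(\mathbf{\Delta}_P),t(\mathbf{\Delta}_P'))$. Beyond this, the central verifications of the argument (the bracket computations showing $H(\mathcal{E}_P),H(\mathcal{H}_P)$ are conjugate and commute with everything external, and the identification of $\theta_i$ with the closed edge invariants modulo $C_P$) are deferred rather than carried out, so the proposal remains an outline rather than a proof.
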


\subsection{Goldman symplectic volume form}
We are well-prepared to compute the Goldman symplectic volume form on $\rm{Pos}_3(S_{g,m})(\mathbf{L})$.
\begin{prop}[Goldman symplectic volume form]
\label{proposition:gsvf}
Let $Y_P:= -t(\mathbf{\Delta}_P)+t(\mathbf{\Delta}_P')$. Let 
\[X_P:= \frac{1}{12}\left(\sigma_2(\mathbf{A}_P)-\sigma_1(\mathbf{A}_P)+\sigma_2(\mathbf{B}_P)-\sigma_1(\mathbf{B}_P)+\sigma_2(\mathbf{C}_P)-\sigma_1(\mathbf{C}_P)\right).\]
The Goldman symplectic volume form $dVol$ on $\rm{Pos}_3(S_{g,m})(\mathbf{L})$ is 
\begin{equation*}
\frac{\omega^{8g-8+3m}}{(8g-8+3m)!}=\bigwedge_{P\in \mathbb{P}} d(X_P)\wedge d(Y_P)\bigwedge_{j=1}^{3g-3+m} d\ell_1(\delta_j)\wedge d\ell_2(\delta_j)  \bigwedge_{j=1}^{3g-3+m} d(\theta_2-\theta_1)(\delta_j)\wedge d\frac{\theta_1+\theta_2}{2}(\delta_j).
\end{equation*} 
\end{prop}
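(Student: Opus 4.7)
The plan is to derive the claimed volume form directly from the generalized Wolpert Magic Formula in Theorem~\ref{theorem:Darboux}. By that theorem the Goldman symplectic form decomposes as
\[
\omega = \sum_{P\in\mathbb{P}} dH(\mathcal{E}_P)\wedge dH(\mathcal{H}_P) + \sum_{j=1}^{3g-3+m}\bigl(d\ell_1(\delta_j)\wedge d\theta_1(\delta_j) + d\ell_2(\delta_j)\wedge d\theta_2(\delta_j)\bigr),
\]
which is a sum of $N := (2g-2+m) + 2(3g-3+m) = 8g-8+3m$ simple $2$-forms $du_i\wedge dv_i$ in pairwise disjoint sets of variables. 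Because each such simple $2$-form squares to zero, the multinomial expansion of $\omega^N$ collapses to a single surviving term and
\[
\frac{\omega^{N}}{N!} = \bigwedge_{P\in\mathbb{P}} dH(\mathcal{E}_P)\wedge dH(\mathcal{H}_P)\,\wedge\,\bigwedge_{j=1}^{3g-3+m} d\ell_1(\delta_j)\wedge d\theta_1(\delta_j)\wedge d\ell_2(\delta_j)\wedge d\theta_2(\delta_j).
\]

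Next I would replace $H(\mathcal{E}_P),H(\mathcal{H}_P)$ by $X_P,Y_P$ modulo differentials of the boundary $i$-th lengths. For $H(\mathcal{H}_P)$ this is immediate: Theorem~\ref{theorem:Darboux} already states $H(\mathcal{H}_P) = Y_P + C_P$ with $C_P$ a linear combination of $\ell_1,\ell_2$ of oriented pants curves. For $H(\mathcal{E}_P)$, writing $S_i = \sigma_i(\mathbf{A}_P)+\sigma_i(\mathbf{B}_P)+\sigma_i(\mathbf{C}_P)$ and $T = t(\mathbf{\Delta}_P)+t(\mathbf{\Delta}_P')$, Lemma~\ref{lem:BD} gives
\[
S_1+S_2 = \ell_1(\alpha_P)+\ell_1(\beta_P)+\ell_1(\gamma_P), \qquad 3T = \sum_{\gamma\in\{\alpha_P,\beta_P,\gamma_P\}}\bigl(\ell_2(\gamma)-\ell_1(\gamma)\bigr),
\]
and an elementary rearrangement of the formula for $H(\mathcal{E}_P)$ yields
\[
H(\mathcal{E}_P) = X_P - \tfrac{1}{4}(S_1+S_2) - \tfrac{1}{2}T.
\]
Hence $dH(\mathcal{E}_P) - dX_P$ and $dH(\mathcal{H}_P) - dY_P$ are linear combinations of $d\ell_1(\delta_j),d\ell_2(\delta_j)$ for $\delta_j\in\mathcal{P}$ (the boundary contributions vanish because the $\ell_i(\alpha_s)$ are fixed by $\mathbf{L}$). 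Since the factors $d\ell_1(\delta_j)\wedge d\ell_2(\delta_j)$ already appear in the product above, these correction terms vanish under the wedge, so $dH(\mathcal{E}_P)\wedge dH(\mathcal{H}_P)$ may be replaced by $dX_P\wedge dY_P$.

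It remains to convert the twist differentials. For each pants curve $\delta_j$, using $\theta_1 = \tfrac{\theta_1+\theta_2}{2} - \tfrac{\theta_2-\theta_1}{2}$, $\theta_2 = \tfrac{\theta_1+\theta_2}{2} + \tfrac{\theta_2-\theta_1}{2}$ a direct expansion gives $d\theta_1\wedge d\theta_2 = -\,d(\theta_2-\theta_1)\wedge d\tfrac{\theta_1+\theta_2}{2}$, and the single transposition needed to bring the block $d\ell_1\wedge d\theta_1\wedge d\ell_2\wedge d\theta_2$ into the order $d\ell_1\wedge d\ell_2\wedge d\theta_1\wedge d\theta_2$ contributes a second sign, the two cancelling. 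Assembling the pants-block wedges, the curve-block wedges, and these twist identifications produces exactly the right-hand side of the proposition.

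The only real bookkeeping obstacle is sign tracking in the last paragraph and verifying that the ``linear in $\ell_i$'' corrections in step two really do involve only pants-curve lengths (and not, for instance, the $\theta_j$'s), which is guaranteed by the explicit form of $C_P$ in Theorem~\ref{theorem:Darboux} and by the explicit formula derived from Lemma~\ref{lem:BD}; otherwise every step is an algebraic manipulation of simple $2$-forms with disjoint variables.
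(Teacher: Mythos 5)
Your proof is correct and follows essentially the same route as the paper: expand the top wedge power of the Darboux form from Theorem~\ref{theorem:Darboux}, absorb the $d(\ell_1),d(\ell_2)$-corrections to $H(\mathcal{E}_P)$ and $H(\mathcal{H}_P)$ into the already-present $d\ell_1(\delta_j)\wedge d\ell_2(\delta_j)$ factors via Lemma~\ref{lem:BD}, and convert the twist block with a pair of cancelling signs. Your explicit identity $X_P - H(\mathcal{E}_P) = \tfrac{1}{4}(S_1+S_2) + \tfrac{1}{2}T = \tfrac{1}{12}\sum\ell_1 + \tfrac{1}{6}\sum\ell_2$ is correct and incidentally reveals a small typo in the paper's proof, where the $\tfrac{1}{6}$-term is written with $\ell_1$ rather than $\ell_2$.
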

\begin{proof}
By Theorem \ref{theorem:Darboux}, we have
\begin{equation}
\label{equation:gvol}
\frac{\omega^{8g-8+3m}}{(8g-8+3m)!}= \bigwedge_{P\in \mathbb{P}} d(H(\mathcal{E}_P))\wedge d(H(\mathcal{H}_P))\bigwedge_{j=1}^{3g-3+m} d\ell_1(\delta_j)\wedge d\theta_1(\delta_j)  \bigwedge_{j=1}^{3g-3+m} d\ell_2(\delta_j)\wedge d\theta_2(\delta_j).
\end{equation}
Notice the all the $d\ell_1$ and $d\ell_2$ of oriented curves in $\mathcal{P}$ appear in the antisymmetric wedge product. Thus we can replace $d(H(\mathcal{H}_P))$ in Equation~\eqref{equation:gvol} by $Y_P=H(\mathcal{H}_P)-C_P$.

By Lemma \ref{lem:BD}, we have 
\begin{equation*}
\begin{aligned}
&\ell_1(\alpha_P)+\ell_1(\beta_P)+\ell_1(\gamma_P)
\\&=\sigma_1(\mathbf{A}_P)+\sigma_2(\mathbf{A}_P)+\sigma_1(\mathbf{B}_P)+\sigma_2(\mathbf{B}_P)+\sigma_1(\mathbf{C}_P)+\sigma_2(\mathbf{C}_P)
\end{aligned}
\end{equation*}
and
\begin{equation*}
\begin{aligned}
&\ell_2(\alpha_P)+\ell_2(\beta_P)+\ell_2(\gamma_P)-\ell_1(\alpha_P)-\ell_1(\beta_P)-\ell_1(\gamma_P)
\\&=3t(\mathbf{\Delta}_P)+3t(\mathbf{\Delta}_P').
\end{aligned}
\end{equation*}

Then we can replace $d(H(\mathcal{E}_P))$ in Equation~\eqref{equation:gvol} by 
\[X_P=H(\mathcal{E}_P)+\frac{1}{12}\left(\ell_1(\alpha_P)+\ell_1(\beta_P)+\ell_1(\gamma_P)\right)+\frac{1}{6}\left(\ell_1(\alpha_P)+\ell_1(\beta_P)+\ell_1(\gamma_P)\right).\]
Moreover, we have
\[d\ell_1(\delta_j)\wedge d\theta_1(\delta_j)  \wedge d\ell_2(\delta_j)\wedge d\theta_2(\delta_j)= d\ell_1(\delta_j)\wedge d\ell_2(\delta_j)  \wedge d(\theta_2-\theta_1)(\delta_j)\wedge d\frac{\theta_1+\theta_2}{2}(\delta_j). \]
We conclude our formula. 
\end{proof}

\section{Goldman symplectic volume of $\mathcal{H}^t(S_{g,m})$}
In this section, we show that the Goldman symplectic volume of the moduli space $\mathcal{H}^t(S_{g,m})(\mathbf{L})$ of unmarked $t$-bounded positive convex $\mathbb{RP}^2$ structures is bounded above by a polynomial of $t$.

\begin{figure}[h!]
\includegraphics[scale=0.28]{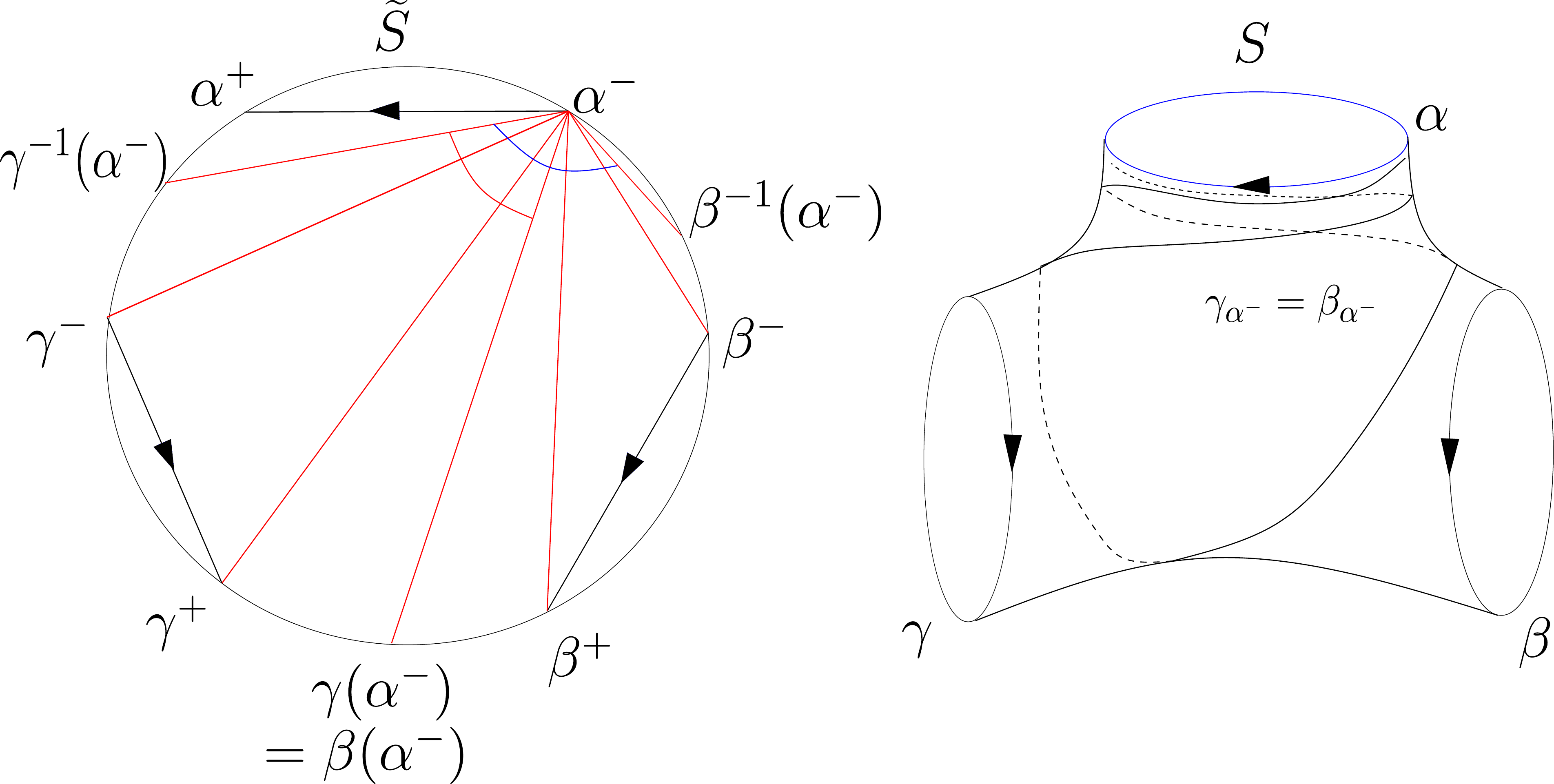}
\caption{The pair of pants $(\beta,\gamma)$ has the boundary components $\alpha$, $\beta$, $\gamma$ with $\alpha \beta^{-1} \gamma= 1$ and $(\beta,\gamma)$ is cut into $(\beta,\beta_{\alpha^-}),(\gamma,\gamma_{\alpha^-})$ along the simple bi-infinite geodesic $\gamma_{\alpha^-}=\beta_{\alpha^-}$. }
\label{Figure:pti}
\end{figure}

\subsection{Generalized McShane's identity}
Another ingredient for estimating the Goldman symplectic volume of the moduli space $\mathcal{H}^t(S_{g,m})(\mathbf{L})$ is the generalized McShane's identity \cite{HS19}, which should fit into the language of the {\em geometric recursion} \cite{ABO17}. The integration should fit into the framework of the {\em topolgical recursion} \cite{Ey14}.
\begin{thm}\cite[Generalized McShane's identity]{HS19}
\label{theorem:mcid}
For a $\operatorname{PGL}(3,\mathbb{R})$-positive representation $\rho \in \rm{Pos}_3^h(S_{g,m})$ with loxodromic boundary monodromy, let $\xi_\rho$ be the canonical $\rho$-equivariant map (Definition \ref{definition:cancho}). Let $\alpha$ be a distinguished oriented boundary component of $S_{g,m}$ such that $S_{g,m}$ is on the left side of $\alpha$. We have the equality:
\begin{equation}
\label{eq:altsummand}
\begin{aligned}
&\ell_1(\alpha)
= \sum_{(\beta,\gamma)\in \mathcal{P}_\alpha\backslash \mathcal{P}^{\partial}_\alpha}
D(\alpha,\beta,\gamma)
 + \sum_{(\beta,\gamma)\in \mathcal{P}^{\partial}_\alpha}
R(\alpha,\beta,\gamma)
\\& =\sum_{(\beta,\gamma)\in \mathcal{P}_\alpha\backslash \mathcal{P}^{\partial}_\alpha}
( \mathcal{D}(\ell_1(\alpha),\phi_1(\beta,\gamma)+\tau(\beta)+\ell_1(\beta),\phi_1(\beta,\gamma)+\tau(\gamma)+\ell_1(\gamma)) +
\\& \mathcal{D}(\ell_1(\alpha^{-1}),\phi_1(\beta^{-1},\gamma^{-1})+\tau(\beta^{-1})+\ell_1(\beta^{-1}),\phi_1(\beta^{-1},\gamma^{-1})+\tau(\gamma^{-1})+\ell_1(\gamma^{-1})) )
\\&  + \sum_{(\beta,\gamma)\in \mathcal{P}^{\partial}_\alpha}
(\mathcal{D}(\ell_1(\alpha),\phi_1'(\beta,\gamma)+\tau(\beta)+\ell_1(\beta),\phi_1'(\beta,\gamma)-\tau(\gamma^{-1})-\ell_1(\gamma^{-1}) ) +
\\& \mathcal{D}(\ell_1(\alpha^{-1}),\phi_1(\beta^{-1},\gamma^{-1})+\tau(\beta^{-1})+\ell_1(\beta^{-1}),\phi_1(\beta^{-1},\gamma^{-1})+\tau(\gamma^{-1})+\ell_1(\gamma^{-1}))  ),
\end{aligned}
\end{equation}
where $\mathcal{P}_\alpha$ is the set of the isotopy classes of pairs of pants with the boundary component $\alpha$, and $\mathcal{P}^{\partial}_\alpha$ is a subset of $\mathcal{P}_\alpha$ containing another boundary component $\gamma$ of $S_{g,m}$. For each pair of pants, we fix a marking on the boundary components $\alpha,\beta,\gamma$ such that $\alpha \beta^{-1}\gamma =1$ as in Figure \ref{Figure:pti}.
Here 
\begin{equation*}
\mathcal{D}(a,b,c):=\log \frac{e^{\frac{a}{2}}+e^{\frac{1}{2}(b+c)}}{e^{-\frac{a}{2}}+e^{\frac{1}{2}(b+c)}},
\end{equation*}
\begin{align*}
\tau(\gamma):=\log T(\alpha^-,\gamma \alpha^-, \gamma^+),\;\;\; \tau(\gamma^{-1})=-\tau(\gamma),
\end{align*} 
\begin{align*}
\phi_1(\beta,\gamma):=\log \frac{\cosh \frac{\log (-D_2(\alpha^-,\gamma(\alpha^-),\beta^+,\gamma^+))}{2}}{\cosh \frac{\log (-D_1(\alpha^-,\gamma(\alpha^-),\beta^+,\gamma^+))}{2}},
\end{align*}
\begin{align*}
\phi_1'(\beta,\gamma):=\log \frac{\cosh \frac{\log (-D_2(\alpha^-,\gamma(\alpha^-),\beta^+,\gamma^-))}{2}}{\cosh \frac{\log (-D_1(\alpha^-,\gamma(\alpha^-),\beta^+,\gamma^-))}{2}}.
\end{align*}
When $(g,m)=(1,1)$, the set $\overrightarrow{\mathcal{P}}^{\partial}_\alpha$ is empty and $\phi_1(\beta,\gamma)=0$ by computation. Let ${\overrightarrow{\mathcal{C}}_{1,1}}$ be the collection of oriented simple closed curves up to homotopy on $S_{1,1}$. Then Equation \eqref{eq:altsummand} reads
\begin{equation}
\label{eq:altsummands11}
\ell_1(\alpha)
=\sum_{\gamma\in \overrightarrow{C}_{1,1}}
\mathcal{D}(\ell_1(\alpha),\tau(\beta)+\ell_1(\beta),\tau(\gamma)+\ell_1(\gamma)) .
\end{equation}

When $\rho\in \rm{Pos}_3^u(S_{1,1})$ is a positive representation with unipotent boundary monodromy. Let $p$ be the puncture of $S_{1,1}$. Then 
\begin{align}
\label{equation:SL3S11identity}
\sum_{\gamma\in{\overrightarrow{\mathcal{C}}_{1,1}}}
\frac{1}{1+e^{\ell_1(\gamma)+\tau(\gamma)}}
= 1,
\end{align}
where $\tau(\gamma)=\log T(\tilde{p},\gamma \tilde{p}, \gamma^+)$ and $(\tilde{p},\gamma \tilde{p}, \gamma^+)$ is a lift of the ideal triangle.
\end{thm}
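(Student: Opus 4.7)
The plan is to adapt the McShane--Mirzakhani strategy to the $\operatorname{PGL}(3,\mathbb{R})$-positive setting, replacing hyperbolic geodesics with projective analogs and the hyperbolic length with the simple root length $\ell_1$. The overall goal is to construct a natural measure on the boundary $\alpha$ whose total mass equals $\ell_1(\alpha)$, and decompose it according to the combinatorial behavior of geodesic rays leaving $\alpha$. Since $\rho$ is positive with loxodromic boundary monodromy, the canonical equivariant flag map $\xi_\rho$ of Definition \ref{definition:cancho} is well defined; I would use it to parameterize the rays leaving a distinguished axis of $\rho(\alpha)$ so that $\ell_1(\alpha)$ coincides with the $\ell_1$-mass of the parameter space.

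First I would establish a Birman--Series type theorem in the positive setting: the set of rays whose projection to $S_{g,m}$ is a simple bi-infinite leaf has $\ell_1$-measure zero. The total positivity of peripheral elements given by Theorem \ref{theorem:loxo}, together with the Anosov-type dynamics from \cite{Lab06,FG06}, provides the exponential contraction that replaces the role of hyperbolic geodesic flow in the classical proof. Then, for each non-simple ray, I would use a topological/spiralling argument to canonically associate it with a unique embedded pair of pants $(\alpha,\beta,\gamma) \in \mathcal{P}_\alpha$, yielding a decomposition of $\alpha$ (up to $\ell_1$-measure zero) into open intervals indexed by $\mathcal{P}_\alpha$. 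When the pair of pants has its extra boundary $\gamma$ lying on $\partial S_{g,m}$, i.e.\ $(\beta,\gamma) \in \mathcal{P}^\partial_\alpha$, the ray can spiral into either side of $\gamma$, so the interval splits in two, accounting for the alternative summand $R(\alpha,\beta,\gamma)$ with its $\phi_1'$ correction.

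The main computation is the closed-form evaluation of the $\ell_1$-measure of each gap in terms of the Fock--Goncharov coordinates of Proposition \ref{prop:confpara}. For each pair of pants with marking $\alpha\beta^{-1}\gamma = 1$, I would lift the ideal triangulation to the universal cover and express the two endpoints of the gap as ratios of eigenvector minors for $\rho(\alpha),\rho(\beta),\rho(\gamma)$. The dilogarithm-like function $\mathcal{D}(a,b,c)$ then emerges as the log of the relevant cross-ratio of flags, naturally encoded by $\ell_1$ lengths of $\alpha,\beta,\gamma$ together with shear corrections: the triple-ratio correction $\tau$ comes from the ideal triangle $(\alpha^-,\gamma\alpha^-,\gamma^+)$, while $\phi_1$ and $\phi_1'$ record the edge invariants $D_1,D_2$ across the gluing flag. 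The main obstacle is this gap computation, because one must track orientations and the asymmetric roles of $\alpha,\beta,\gamma$ in the pants marking, and then verify that the contributions telescope over $\mathcal{P}_\alpha$ to recover precisely $\ell_1(\alpha)$. Once the loxodromic case is in hand, equation \eqref{eq:altsummands11} for $(g,m)=(1,1)$ follows by noting that $\mathcal{P}^\partial_\alpha$ is empty and $\phi_1(\beta,\gamma)$ vanishes by direct computation, and identity \eqref{equation:SL3S11identity} for unipotent boundary is obtained by dividing both sides by $\ell_1(\alpha)$ and letting $\ell_1(\alpha)\to 0$ via a careful expansion of $\mathcal{D}$ near $a=0$.
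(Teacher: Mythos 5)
The paper does not prove Theorem~\ref{theorem:mcid}; it is imported verbatim from \cite{HS19}, so there is no in-paper argument against which to compare your sketch. Your outline follows the classical McShane--Mirzakhani template: parameterize rays leaving $\alpha$ by an $\ell_1$-length measure, show the simple rays have measure zero (Birman--Series analog), and sum the gap masses. This is a sensible prototype, but the entire weight of the theorem rests on the step you yourself flag as the "main obstacle," namely the closed-form evaluation of each gap in terms of the Fock--Goncharov data, producing the specific combination $\mathcal{D}(\ell_1(\alpha),\phi_1+\tau(\beta)+\ell_1(\beta),\phi_1+\tau(\gamma)+\ell_1(\gamma))$ with the $\tau$ and $\phi_1,\phi_1'$ corrections. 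A proposal that correctly identifies the hard step but does not carry it out is a plan, not a proof.

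There are also concrete missing pieces. First, you need to construct a natural \emph{first simple-root} measure along $\alpha$: in the $\operatorname{PGL}(3,\mathbb{R})$ case the boundary holonomy carries two independent lengths $\ell_1,\ell_2$, and both the existence of the right measure and the reason the left-hand side is $\ell_1(\alpha)$ rather than $\ell_2(\alpha)$ or $\ell(\alpha)$ have to be argued from the flag structure, not asserted. Second, the Birman--Series analog for the Hilbert metric on $\Omega_\rho$ requires an actual Hausdorff-dimension or measure estimate; "exponential contraction from Anosov-type dynamics" is only the starting point. Third, deriving \eqref{equation:SL3S11identity} by dividing by $\ell_1(\alpha)$ and letting $\ell_1(\alpha)\to 0$ needs uniform control of the tail of the infinite sum and of the Fock--Goncharov parameters along the degeneration to a unipotent boundary; without that, interchange of limit and summation is unjustified. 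Finally, judging from the title of \cite{HS19}, the cited proof appears to be built on the Goncharov--Shen potential and a summation over ideal triangles, which is a different route from the ergodic gap decomposition you propose and in particular finesses the Birman--Series issue; so even if your route can be made to work, it is genuinely parallel to, not a reconstruction of, the cited argument.
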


\subsection{Case for $S_{1,1}$}

\begin{notation}
\label{notation:vol}
The Goldman symplectic volume of the moduli space $\mathcal{H}^t(S_{g,m})(\mathbf{L})$ of unmarked $t$-bounded positive convex $\mathbb{RP}^2$ structures with fixed boundary simple root lengths $\mathbf{L}$ (Definition \ref{definition:mainob}) is denoted by $V_{g,m}^t(\mathbf{L})$.
\end{notation}

Let us start with an estimate of the {\em polylogarithm}, which is defined to be
\[Li_1(x):=-\log (1-x),\]
and for any integer $k\geq 1$
\[Li_k(x):=\int_{0}^x \frac{Li_{k-1}(t)}{t} dt.\]
\begin{lem}
\label{lemma:pl}
Let $a_k:=-Li_k(-1)$ for any integer $k\geq 2$. For any $t\geq 0$ and any integer $d\geq 2$, we have
\begin{equation}
\label{equation:poly1}
t\leq \log(1+e^t)\leq t+\log 2,
\end{equation}

\begin{equation}
\label{equation:polyd}
\frac{t^d}{d!}+\sum_{k=2}^d \frac{a_k}{(d-k)!} t^{d-k}\leq -Li_d(-e^t)\leq \frac{t^d}{d!}+\frac{\log 2}{(d-1)!}t^{d-1}+\sum_{k=2}^d \frac{a_k}{(d-k)!} t^{d-k}:=P_d(t).
\end{equation}
\end{lem}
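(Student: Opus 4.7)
The plan is to prove \eqref{equation:poly1} directly and then deduce \eqref{equation:polyd} by induction on $d$, using the natural recursion that the polylogarithms satisfy under the substitution $x=-e^t$.

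For \eqref{equation:poly1}, I would write $\log(1+e^t)=t+\log(1+e^{-t})$ and use that for $t\geq 0$ the quantity $\log(1+e^{-t})$ lies in $[0,\log 2]$. This gives both inequalities at once with no further work.

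For \eqref{equation:polyd}, the starting observation is the differentiation identity
\begin{equation*}
\frac{d}{dt}\bigl(-Li_d(-e^t)\bigr)=-Li_{d-1}(-e^t),
\end{equation*}
which follows from $Li_d'(x)=Li_{d-1}(x)/x$ and the chain rule applied to $x=-e^t$. Integrating from $0$ to $t$ and using $-Li_d(-1)=a_d$ gives the clean recursion
\begin{equation*}
-Li_d(-e^t)=a_d+\int_0^t\bigl(-Li_{d-1}(-e^s)\bigr)\,ds.
\end{equation*}
I would then run induction on $d$, with base case $d=2$: using \eqref{equation:poly1} (that is, $s\leq -Li_1(-e^s)=\log(1+e^s)\leq s+\log 2$) and integrating yields $\tfrac{t^2}{2}+a_2\leq -Li_2(-e^t)\leq \tfrac{t^2}{2}+(\log 2)\,t+a_2$, which matches the claim. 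For the inductive step, assuming \eqref{equation:polyd} at level $d-1$, I integrate the lower bound $\tfrac{s^{d-1}}{(d-1)!}+\sum_{k=2}^{d-1}\tfrac{a_k}{(d-1-k)!}s^{d-1-k}$ term by term to obtain $\tfrac{t^d}{d!}+\sum_{k=2}^{d-1}\tfrac{a_k}{(d-k)!}t^{d-k}$, then add $a_d$ (the $k=d$ term) to recover the lower bound at level $d$. The same bookkeeping on the upper bound produces the extra $\tfrac{\log 2}{(d-1)!}t^{d-1}$ term and closes the induction.

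No step here presents a real obstacle; the only point that requires minor care is indexing the sums so that the constant-of-integration term $a_d$ gets absorbed as the $k=d$ summand, and checking that the lone $\log 2$ coefficient in the upper bound propagates correctly through the integration (it remains the unique non-$a_k$ coefficient at each level). Everything else is routine integration of polynomials combined with the recursive identity above.
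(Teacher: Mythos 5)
Your proof is correct and follows essentially the same route as the paper: it establishes \eqref{equation:poly1} by a one-line manipulation (the paper takes logs of $e^t\leq 1+e^t\leq 2e^t$, which is equivalent to your $\log(1+e^t)=t+\log(1+e^{-t})$), and then proves \eqref{equation:polyd} by induction on $d$, integrating the level-$(d-1)$ bound over $[0,t]$ and absorbing the constant of integration $a_d$ as the $k=d$ summand. The only cosmetic difference is that you spell out the identity $\tfrac{d}{dt}\bigl(-Li_d(-e^t)\bigr)=-Li_{d-1}(-e^t)$, which the paper uses implicitly when it says ``integrating over $t\in[0,x]$.''
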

\begin{proof}
For $t\geq0$, we have
\[e^t\leq 1+e^t\leq 2e^t.\]
Thus we obtain Inequality \eqref{equation:poly1}.

We prove Inequality \eqref{equation:polyd} by induction on $d$. For $d=2$, integrating \eqref{equation:poly1} over $t\in[0,x]$, we obtain 
\[\frac{x^2}{2}\leq -Li_2(-e^x)-a_2\leq \frac{x^2}{2}+(\log2) \cdot x.\]
Thus
\[\frac{x^2}{2}+a_2\leq -Li_2(-e^x)\leq \frac{x^2}{2}+(\log2) \cdot x +a_2
\]
for any $x\geq 0$.
Suppose Inequality \eqref{equation:polyd} is true for $d-1\geq 1$, we integrate over $t\in[0,x]$. Then we obtain
\[
\frac{x^d}{d!}+\sum_{k=2}^{d-1} \frac{a_k}{(d-k)!} x^{d-k}\leq -Li_d(-e^x)-a_d\leq \frac{x^d}{d!}+\frac{\log 2}{(d-1)!}x^{d-1}+\sum_{k=2}^{d-1} \frac{a_k}{(d-k)!} x^{d-k}.
\]
Hence
\[
\frac{x^d}{d!}+\sum_{k=2}^{d} \frac{a_k}{(d-k)!} x^{d-k}\leq -Li_d(-e^x)\leq \frac{x^d}{d!}+\frac{\log 2}{(d-1)!}x^{d-1}+\sum_{k=2}^{d} \frac{a_k}{(d-k)!} x^{d-k}
\]
for any $x\geq 0$.
\end{proof}
\begin{thm}
\label{theorem:S11uvol}
The Goldman symplectic volume $V_{1,1}^t(\mathbf{0})$ is bounded above by a positive polynomial of $t$.
\end{thm}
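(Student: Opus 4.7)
The strategy is to mimic Mirzakhani's integration scheme in \cite{Mir07a}, substituting the classical McShane identity with the generalized one from Theorem \ref{theorem:mcid} and Wolpert's magic formula with its higher-rank analogue Theorem \ref{theorem:Darboux}. The first step is to start from $V_{1,1}^t(\mathbf{0}) = \int_{\mathcal{H}^t(S_{1,1})(\mathbf{0})} dVol$ and apply the generalized McShane identity \eqref{equation:SL3S11identity} to rewrite the constant $1$ as $\sum_{\gamma \in \overrightarrow{\mathcal{C}}_{1,1}} (1+e^{\ell_1(\gamma)+\tau(\gamma)})^{-1}$. Multiplying by $dVol$, applying Mirzakhani's integration formula (Theorem \ref{theorem:mirint}), and using that $\rm{Mod}(S_{1,1})$ acts transitively on oriented simple closed curves unfolds the sum to a single integral over a fixed curve $\gamma$:
\[
V_{1,1}^t(\mathbf{0}) = \int_{\rm{Pos}_3^t(S_{1,1})(\mathbf{0})/\langle D_\gamma \rangle} \frac{1}{1+e^{\ell_1(\gamma)+\tau(\gamma)}}\,dVol,
\]
where $\langle D_\gamma \rangle$ denotes the cyclic stabilizer generated by the Dehn twist along $\gamma$.

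The second step is to parametrize this quotient by the Darboux coordinates of Proposition \ref{proposition:gsvf}, taking $\gamma$ as the unique pants curve and $P$ the resulting pair of pants, so that
\[
dVol = dX_P \wedge dY_P \wedge d\ell_1(\gamma) \wedge d\ell_2(\gamma) \wedge d\theta_1(\gamma) \wedge d\theta_2(\gamma).
\]
The four $t$-bounded conditions in Definition \ref{definition:bpos} now provide polynomial-in-$t$ range bounds on every coordinate except $\ell_1$: the triple ratio bound $mT \le t$ controls both $|X_P|$ and $|\tau(\gamma)|$; the bulging bound $mD \le t$ controls $|Y_P|$; the eigenvalue-ratio bound $mL \le t$ yields $\ell_2 \le L(t)\,\ell_1$; the twist-bulge bound $mB \le t$ controls $|\theta_2 - \theta_1|$; and a fundamental domain for the Dehn twist (which acts by $(\theta_1, \theta_2) \mapsto (\theta_1 + \ell_1, \theta_2 + \ell_2)$) restricts $\theta_1 \in [0, \ell_1)$. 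Integrating out these five coordinates produces a factor of the form $Q(t)\,\ell_1^{2}$ with $Q$ a positive polynomial.

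The third step evaluates the residual $\ell_1$-integral. Expanding the integrand as a geometric series yields the polylogarithm identity
\[
\int_0^\infty \ell^{n}\,\frac{1}{1+e^{\ell+\tau}}\,d\ell = n!\,\bigl(-Li_{n+1}(-e^{-\tau})\bigr).
\]
Since $|\tau(\gamma)| \le t$, Lemma \ref{lemma:pl} bounds the right-hand side by $n!\,P_{n+1}(t)$, a polynomial of degree $n+1$ in $t$. Taking $n = 2$ and combining with the factor $Q(t)$ from Step 2 delivers the desired polynomial upper bound on $V_{1,1}^t(\mathbf{0})$.

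The main obstacle I anticipate is the unfolding step in the $\mathbb{RP}^2$ setting, specifically the handling of the two-dimensional twist space $(\theta_1, \theta_2)$. A fundamental domain for the single Dehn twist in this plane is an infinite strip, so boundedness of the integration domain must come entirely from $mB \le t$. However, $mB$ is defined in Definition \ref{definition:bpos} as the maximal twist-bulge deformation keeping $\rho$ inside the bounded locus, rather than as a pointwise bound on $|\theta_2 - \theta_1|$ itself; verifying that the former implies the latter, or otherwise redesigning the fundamental-domain slice to ensure boundedness, is where the argument requires the most care.
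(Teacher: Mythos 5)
Your proposed argument follows the paper's proof of Theorem~\ref{theorem:S11uvol} almost step for step: generalized McShane identity \eqref{equation:SL3S11identity}, Mirzakhani-style unfolding, the Darboux coordinates of Proposition~\ref{proposition:gsvf}, range bounds from Definition~\ref{definition:bpos}, the Fermi--Dirac integral \eqref{equation:lipoly}, and Lemma~\ref{lemma:pl}. Two corrections are needed, and your stated caveat deserves confirmation. First, you have the roles of $X_P$ and $Y_P$ reversed: by Proposition~\ref{proposition:gsvf}, $X_P = \tfrac{1}{12}\sum(\sigma_2 - \sigma_1)$ is built from $\sigma_2-\sigma_1 = \log(D_2/D_1)$, i.e.\ the bulging invariants, so it is $mD(\rho)\le t$ that gives $|X_P|\le \tfrac{3t}{12}$; whereas $Y_P = -t(\mathbf{\Delta}_P)+t(\mathbf{\Delta}_P')$ is a difference of triple ratios, so $mT(\rho)\le t$ gives $|Y_P|\le 2t$. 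The conclusion survives, but the attribution should be swapped. Second, since \eqref{equation:SL3S11identity} is a sum over \emph{oriented} curves and the paper quotients by the pure mapping class group (which does not reverse the orientation of $\gamma$), the unfolding on $\rm{Pos}_3^t(S_{1,1})(\mathbf{0})/\rm{Stab}(\gamma)$ produces two summands, one for $\gamma$ and one for $\gamma^{-1}$ (using $\ell_1(\gamma^{-1})=\ell_2(\gamma)$ and $\tau(\gamma^{-1})=-\tau(\gamma)$); your formula has only one, which affects a constant but not the polynomial bound. Finally, your concern about the $mB$-bound is legitimate and in fact points to a spot where the paper is terse: the Dehn twist shifts $\theta_2-\theta_1$ by $\ell_2-\ell_1$, so $\theta_2-\theta_1$ is not constant on Dehn-twist orbits and hence not intrinsically defined on the quotient, and $b^\rho(\gamma)$ as worded in Proposition~\ref{proposition:tbu} measures deformability from $\rho$ rather than a pointwise offset. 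The paper asserts $|(\theta_2-\theta_1)(\gamma)|\le t$ directly from Definition~\ref{definition:bpos} without elaboration; making this airtight requires either calibrating the twist-bulging parameter against a reference (e.g.\ a Fuchsian slice where $\theta_2-\theta_1=0$) or choosing a fundamental domain slanted along the shift vector $(\ell_1,\ell_2)$ whose transverse width is controlled by $mB$. You are right to flag this as the most delicate point.
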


\begin{proof}
Using the same trick as Mirzakhani \cite[Theorem 1.2]{Mir07a} on Equation \eqref{equation:SL3S11identity} of Theorem \ref{theorem:mcid}, we have
\begin{equation}
\label{equation:S11u1}
\begin{aligned}
&V_{1,1}^t(\mathbf{0})=\int_{\mathcal{H}^t(S_{1,1})(\mathbf{0})}  1\cdot d Vol=  \int_{\mathcal{H}^t(S_{1,1})(\mathbf{0})}  \sum_{\gamma\in{\overrightarrow{\mathcal{C}}_{1,1}}}
\frac{1}{1+e^{\ell_1(\gamma)+\tau(\gamma)}}
 \cdot d Vol
\\&=  \int_{\rm{Pos}_3^t(S_{1,1})(\mathbf{0})/\rm{Stab}(\gamma)}  \left( \frac{1}{1+e^{\ell_1(\gamma)+\tau(\gamma)}}+\frac{1}{1+e^{\ell_2(\gamma)-\tau(\gamma)}}\right) \cdot d Vol ,
\end{aligned}
\end{equation}
where
\begin{equation*}
\begin{aligned}
&\rm{Pos}_3^t(S_{1,1})(\mathbf{0})/\rm{Stab}(\gamma)=
\{(X_P,Y_P,\ell_1(\gamma),\theta_1(\gamma),\ell_2(\gamma),\theta_2(\gamma))\in \rm{Pos}_3^t(S_{1,1})(\mathbf{0}) \}/\\&
(X_P,Y_P,\ell_1(\gamma),\theta_1(\gamma),\ell_2(\gamma),\theta_2(\gamma))\sim (X_P,Y_P,\ell_1(\gamma),\theta_1(\gamma)+\ell_1(\gamma),\ell_2(\gamma),\theta_2(\gamma)+\ell_2(\gamma)).
\end{aligned}
\end{equation*}
By Definition \ref{definition:bpos}, we have
\[|\tau(\gamma)|\leq t,\;\;\;|(\theta_2-\theta_1)(\gamma)|\leq t, \;\;\;|\sigma_2(\mathbf{A}_P)-\sigma_1(\mathbf{A}_P)|\leq t,\;\;\;\frac{\ell_1(\gamma)}{\ell_2(\gamma)}\leq t,\;\;\;\frac{\ell_2(\gamma)}{\ell_1(\gamma)}\leq t.\]
Thus $|Y_P|\leq 2t$ and $|X_P|\leq \frac{3t}{12}$. Using Proposition \ref{proposition:gsvf}, we continue the right hand side of Equation~\eqref{equation:S11u1} 
\begin{equation}
\label{equation:S11u2}
\begin{aligned}
&\leq \frac{6 t}{12}\cdot 4t \cdot 2t \int  \frac{\ell_1(\gamma)+\ell_2(\gamma)}{2}\left( \frac{1}{1+e^{\ell_1(\gamma)-t}}+\frac{1}{1+e^{\ell_2(\gamma)-t}}\right) d \ell_1(\gamma) d \ell_2(\gamma) 
\\&= 2t^3\cdot \int  \left( \frac{\ell_1(\gamma) +\ell_2(\gamma)}{1+e^{\ell_1(\gamma)-t}}+\frac{\ell_1(\gamma)+ \ell_2(\gamma)}{1+e^{\ell_2(\gamma)-t}}\right) d \ell_1(\gamma) d \ell_2(\gamma) 
\\&\leq 2t^3\cdot \int_{0}^{+\infty}\int_0^{t \ell_1(\gamma)}   \frac{\ell_1(\gamma) +\ell_2(\gamma)}{1+e^{\ell_1(\gamma)-t}} d \ell_1(\gamma) d \ell_2(\gamma) +
\\&2t^3\cdot\int_0^{t \ell_2(\gamma)}  \int_{0}^{+\infty} \frac{\ell_1(\gamma) + \ell_2(\gamma)}{1+e^{\ell_2(\gamma)-t}} d \ell_1(\gamma) d \ell_2(\gamma)
\\&=(2t^5+4t^4)\cdot \int_{0}^{+\infty}  \frac{x^2}{1+e^{x-t}} d x.
\end{aligned}
\end{equation}
By \cite{Le87}, the {\em complete Fermi--Dirac integral}
\begin{equation}
\label{equation:lipoly}
\begin{aligned}
& \int_{0}^{+\infty}  \frac{x^d }{1+e^{x-t}} d x= \int_{0}^{+\infty}  \sum_{k=0}^{+\infty} (-1)^k e^{t\cdot(k+1)} e^{-x\cdot(k+1)} x^d d x
\\&= \sum_{k=0}^{+\infty} (-1)^k e^{t\cdot(k+1)} \int_{0}^{+\infty}  e^{-x\cdot(k+1)} x^d d x=\Gamma(d+1) \cdot \sum_{k=0}^{+\infty} \frac{(-1)^k e^{t\cdot(k+1)}}{(k+1)^{d+1}}
\\&= -d! \cdot \sum_{k=0}^{+\infty} \frac{(-e^{t})^{k+1}    }{(k+1)^{d+1}}=-d! \cdot Li_{d+1}(-e^{t}).
\end{aligned}
\end{equation}
Taking $d=2$ and combing with Equation \eqref{equation:S11u2}, we obtain
\begin{equation*}
V_{1,1}^t(\mathbf{0}) \leq -(4 t^5+8t^4) \cdot Li_3(-e^{t}).
\end{equation*}
By Lemma \ref{lemma:pl}, for any $t\geq 0$, we have
\[-Li_3(-e^{t})\leq P_3(t),\]
where $a_k=-Li_k(-1)>0$.
Thus 
\[V_{1,1}^t(\mathbf{0}) \leq Q(t),\]
where $Q(t)$ is a positive polynomial of $t$.
\end{proof}

\begin{thm}[Main theorem]
\label{thm:main}
For $2g-2+m>0$, $m>0$ and $\mathbf{L}\in \mathbb{R}_{>0}^{2m}$, the Goldman symplectic volume $V_{g,m}^t(\mathbf{L})$ (Notation \ref{notation:vol}) is bounded above by a positive polynomial of $(t,\mathbf{L})$.
\end{thm}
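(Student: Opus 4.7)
The plan is to induct on the complexity $3g-3+m$ of the surface, using the generalized McShane's identity (Theorem \ref{theorem:mcid}) to peel off one pair of pants at a time and the Mirzakhani integration formula (Theorem \ref{theorem:mirint}) to reduce to a Goldman volume on a simpler surface. The base cases are the pair of pants $S_{0,3}$, where the $t$-bounded moduli space with fixed $\mathbf{L}$ is parametrized by $(X_P, Y_P)$ of Proposition \ref{proposition:gsvf} with $|X_P|, |Y_P| = O(t)$ and hence has Goldman volume bounded by a polynomial in $t$, and $S_{1,1}$ with loxodromic boundary, handled exactly as in Theorem \ref{theorem:S11uvol} but starting from the loxodromic identity \eqref{eq:altsummands11} in place of \eqref{equation:SL3S11identity}; the extra factor $L_1$ on the left of \eqref{eq:altsummands11} only rescales the resulting Fermi--Dirac bound.

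For the inductive step, fix the distinguished oriented boundary $\alpha_1$ with $\ell_1(\alpha_1)=L_1$. Multiplying the identity \eqref{eq:altsummand} by $dVol$ and integrating over $\mathcal{H}^t(S_{g,m})(\mathbf{L})$ gives
\[
L_1 \cdot V_{g,m}^t(\mathbf{L}) = \int_{\mathcal{H}^t(S_{g,m})(\mathbf{L})} \Big( \sum_{(\beta,\gamma) \in \mathcal{P}_{\alpha_1} \setminus \mathcal{P}^{\partial}_{\alpha_1}} D(\alpha_1,\beta,\gamma) + \sum_{(\beta,\gamma)\in \mathcal{P}^{\partial}_{\alpha_1}} R(\alpha_1,\beta,\gamma) \Big) \, dVol.
\]
Applying Theorem \ref{theorem:mirint} unfolds each mapping-class-group sum: for a pair of pants $P=P_{\alpha_1,\beta,\gamma}$, the integral over the quotient by $\mathrm{Stab}(P)$ decomposes via the Darboux coordinates of Proposition \ref{proposition:gsvf} into fiber integrations over $(\ell_1(\beta), \ell_2(\beta), \theta_1(\beta), \theta_2(\beta))$ (and similarly for $\gamma$ when it is an interior pants curve) multiplied by the Goldman volume $V_{g',m'}^t(\mathbf{L}')$ of $S_{g,m} \setminus P$, where $\mathbf{L}'$ is the tuple of induced boundary simple root lengths.

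To estimate each summand, I would use the $t$-bounded constraints from Definition \ref{definition:bpos}: the invariants $|X_P|, |Y_P|, |\tau|, |\phi_1|, |(\theta_2-\theta_1)(\beta)|$ are all $O(t)$, the ratio bound $\ell_2(\delta)/\ell_1(\delta) \leq t$ confines the length parameters to a region of bounded aspect ratio, and the twist $(\theta_1+\theta_2)/2$ integrates over a fundamental domain of length $O(\ell_1+\ell_2)$. The integrand $\mathcal{D}(L_1, a, b)$ is of the shape $\log(1+e^{L_1-a-b+O(t)})$, so after multiplying by the inductive polynomial bound on $V_{g',m'}^t(\mathbf{L}')$ (which is polynomial in its arguments) and changing variables to $x=\ell_1(\beta)+\ell_2(\beta)$, the $(\ell_1(\beta),\ell_2(\beta))$-integral reduces to a complete Fermi--Dirac integral of the form \eqref{equation:lipoly}. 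By Lemma \ref{lemma:pl} this is bounded by a polynomial in $(L_1, t, \mathbf{L})$, and dividing through by $L_1$ at the end (using $L_1>0$, or else handling $L_1\to 0$ separately by continuity) yields the desired polynomial bound on $V_{g,m}^t(\mathbf{L})$.

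The main obstacle will be book-keeping: one must carefully track the several geometric configurations for $P$ (separating vs.\ non-separating, and whether its third boundary $\gamma$ is one of the $\alpha_i$ or an interior curve), handle each of the four terms of the McShane sum \eqref{eq:altsummand} separately, and verify that the complement $S_{g,m}\setminus P$ has all boundary simple root lengths in $\mathbb{R}_{>0}$ so that the inductive hypothesis applies to $V_{g',m'}^t(\mathbf{L}')$ uniformly in the integration variables. The hypothesis $\mathbf{L}\in \mathbb{R}_{>0}^{2m}$ enters precisely here, and also in ensuring the bounds of Propositions \ref{proposition:l1l2b} and \ref{proposition:tbu} apply. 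The remaining analytic content—propagating polynomial bounds through each Fermi--Dirac integration—is furnished uniformly by Lemma \ref{lemma:pl}, so the induction closes and produces a polynomial bound in $(t,\mathbf{L})$ as claimed.
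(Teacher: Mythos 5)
Your strategy matches the paper's in its main outlines: induction on the complexity (the paper uses $2g-2+m$, you use $3g-3+m$, but both decrease under every cutting operation), applying the generalized McShane's identity to produce an expression for $L_1 \cdot V_{g,m}^t(\mathbf{L})$, unfolding it with the extended Mirzakhani integration formula, invoking the $t$-bounded constraints of Definition \ref{definition:bpos} to confine the internal parameters $X_P, Y_P$, the twist-bulging parameters, and the ratio $\ell_2/\ell_1$, and using Lemma \ref{lemma:pl} to control the resulting Fermi--Dirac integrals. The case split into $\mathcal{P}_\alpha^\partial$ and $\mathcal{P}_\alpha\setminus\mathcal{P}_\alpha^\partial$ (and within the latter, into connected and disconnected complements) is also the paper's case structure. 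So far so good.

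The genuine gap is in the final step, where you say ``dividing through by $L_1$ at the end (using $L_1>0$, or else handling $L_1\to 0$ separately by continuity) yields the desired polynomial bound.'' This does not work as stated. You directly bound the integrand $\mathcal{D}(L_1,\cdot,\cdot)$ by something of the shape $\log(1+e^{L_1-(a+b)+O(t)})$, and the resulting Fermi--Dirac integral, estimated via Lemma \ref{lemma:pl}, produces an upper bound $Q(t,\mathbf{L})$ for $L_1 V_{g,m}^t(\mathbf{L})$ whose value at $L_1=0$ is strictly positive (e.g.\ $\int_0^\infty \log(1+e^{-x})\,dx = -\mathrm{Li}_2(-1) > 0$). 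Such a $Q$ divided by $L_1$ is not a polynomial and in fact blows up as $L_1\to 0$, so the induction does not close. The ``continuity'' hand-wave does not rescue this, since you have not produced any bound that remains finite uniformly as $L_1\to 0$. What is missing is the derivative-in-$L_1$ device the paper uses (following Mirzakhani): one computes
\[
\frac{\partial}{\partial L_1}\bigl(L_1\, V_{g,m}^t(\mathbf{L})\bigr)
\]
via the McShane identity, using that $\frac{\partial}{\partial a}\mathcal D(a,b,c)=\tfrac12 H(b+c,a)$ with $H$ a sum of Fermi factors. This is bounded by a polynomial $Q(t,L_1,\dots)$, and then
\[
L_1\,V_{g,m}^t(\mathbf{L})=\int_0^{L_1}\frac{d}{dx}\bigl(x\,V_{g,m}^t(x,\dots)\bigr)\,dx\le\int_0^{L_1}Q(t,x,\dots)\,dx=L_1\cdot R(t,L_1,\dots),
\]
which manufactures the factor of $L_1$ on the right-hand side so that the division is painless and $R$ is a genuine polynomial. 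Note the paper uses this same derivative trick already in the base case $S_{1,1}$ with loxodromic boundary (Equation \eqref{equation:s11h1}); your claim that it is ``handled exactly as in Theorem \ref{theorem:S11uvol}'' (the unipotent case, where no such division is needed) glosses over exactly this point. Reinsert the $\partial/\partial L_1$ step and the rest of your outline matches the paper's argument closely.
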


\begin{proof}[Proof of Theorem \ref{thm:main} for $(g,m)=(1,1)$]
Let $\mathbf{L}=(L_1,L_2):=(\ell_1(\alpha),\ell_2(\alpha))$.
Similar to Equation \eqref{equation:S11u1}, by Equation \eqref{eq:altsummands11} of Theorem \ref{theorem:mcid}, we obtain
\begin{equation}
\label{equation:sllh2}
\begin{aligned}
&L_1 \cdot V_{1,1}^t(L_1,L_2)
\\&=  \int_{\rm{Pos}_3^t(S_{1,1})(L_1,L_2)/\rm{Stab}(\gamma)} \left(\log \frac{e^{\frac{L_1}{2}}+e^{\ell_1(\gamma)+\tau(\gamma)}}{e^{-\frac{L_1}{2}}+e^{\ell_1(\gamma)+\tau(\gamma)}}+\log \frac{e^{\frac{L_1}{2}}+e^{\ell_2(\gamma)-\tau(\gamma)}}{e^{-\frac{L_1}{2}}+e^{\ell_2(\gamma)-\tau(\gamma)}}\right) d Vol ,
\end{aligned}
\end{equation}
where
\begin{equation*}
\begin{aligned}
&\rm{Pos}_3^t(S_{1,1})(L_1,L_2)/\rm{Stab}(\gamma)=
\{(X_P,Y_P,\ell_1(\gamma),\theta_1(\gamma),\ell_2(\gamma),\theta_2(\gamma))\in \rm{Pos}_3^t(S_{1,1})(L_1,L_2) \}/\\&
(X_P,Y_P,\ell_1(\gamma),\theta_1(\gamma),\ell_2(\gamma),\theta_2(\gamma))\sim (X_P,Y_P,\ell_1(\gamma),\theta_1(\gamma)+\ell_1(\gamma),\ell_2(\gamma),\theta_2(\gamma)+\ell_2(\gamma)).
\end{aligned}
\end{equation*}

To simplify the computation, taking the following derivative
\begin{equation}
\begin{aligned}
\label{equation:s11h1}
&\frac{d}{d L_1} \left(\log \frac{e^{\frac{L_1}{2}}+e^{\ell_1(\gamma)+\tau(\gamma)}}{e^{-\frac{L_1}{2}}+e^{\ell_1(\gamma)+\tau(\gamma)}}+\log \frac{e^{\frac{L_1}{2}}+e^{\ell_2(\gamma)-\tau(\gamma)}}{e^{-\frac{L_1}{2}}+e^{\ell_2(\gamma)-\tau(\gamma)}}\right)
\\&=\frac{1}{2}\left(\frac{1}{1+e^{\ell_1(\gamma)+\tau(\gamma)-\frac{L_1}{2}}}+\frac{1}{1+e^{\ell_1(\gamma)+\tau(\gamma)+\frac{L_1}{2}}}+\frac{1}{1+e^{\ell_2(\gamma)-\tau(\gamma)-\frac{L_1}{2}}}+\frac{1}{1+e^{\ell_2(\gamma)-\tau(\gamma)+\frac{L_1}{2}}}\right).
\end{aligned}
\end{equation}

Following the same arguments as Theorem \ref{theorem:S11uvol}, by Equations \eqref{equation:sllh2} and \eqref{equation:s11h1}, for any $L_1> 0$, we obtain
\begin{equation*}
\begin{aligned}
&\frac{d}{d L_1} \left(L_1 \cdot V_{1,1}^t(L_1,L_2)\right) \leq (t^5+2t^4) \int_0^{+\infty} \left(\frac{x^2}{1+e^{x-t-\frac{L_1}{2}}} + \frac{x^2}{1+e^{x-t+\frac{L_1}{2}}}\right) d x
\\&= (2t^5+4t^4) \cdot \left( -Li_3(-e^{t+\frac{L_1}{2}}) -Li_3(-e^{t-\frac{L_1}{2}}\right)  
\\&\leq  (2t^5+4t^4) \cdot\left(P_3\left(t+\frac{L_1}{2}\right)+P_3\left(t-\frac{L_1}{2}\right) \right):=Q(t,L_1).
\end{aligned}
\end{equation*}
The polynomial $Q(t,L_1)$ is a positive polynomial of $t$ (Recall $P_4(t)$ in Formula \eqref{equation:polyd}). Thus
\begin{equation*}
\begin{aligned}
&L_1 \cdot V_{1,1}^t(L_1,L_2) =\int_{0}^{L_1}\frac{d}{d x} \left(x \cdot V_{1,1}^t(x,L_2)\right)dx \leq \int_{0}^{L_1} Q(t,x) dx
\\&=L_1 \cdot R(t,L_1).
\end{aligned}
\end{equation*}
where $R(t,L_1)$ is a positive polynomial of $t$. We conclude that $V_{1,1}^t(L_1,L_2)$ is bounded above by a positive polynomial of $t$.
\end{proof}

\subsection{Case for $S_{g,m}$}
Firstly, let us generalize Mirzakhani's integration formula that will be used to cut off the pairs of pants.
A {\em simple oriented multi-curve} is a finite sum of disjoint simple oriented closed curves with positive weights, none of whose components are peripheral. We can represent a pair of pants by a multi-curve. For any simple oriented multi-curve $\bm{\gamma}=\sum_{i=1}^k c_i \gamma_i$ and any $\rho\in \rm{Pos}_3(S_{g,m})$, suppose $f^{\bm{\gamma}}$ is a measurable function from $\rm{Pos}_3(S_{g,m})(\mathbf{L})$ to $\mathbb{R}_{\geq 0}$. We define $f_{\bm{\gamma}}$ from $\mathcal{H}(S_{g,m})(\mathbf{L})$ to $\mathbb{R}_{\geq 0}$ by
\begin{equation*}
f_{\bm{\gamma}}(\rho):=\sum_{[\bm{\alpha}] \in \rm{Mod}(S_{g,m})\cdot [\bm{\gamma}]} f^{\bm{\alpha}}(\rho).
\end{equation*}
Suppose that the simple oriented multi-curve $\bm{\gamma}$ decomposes $\rho \in \mathcal{H}(S_{g,m})(\mathbf{L})$ into $s$ connected component $\rho_1,\cdots, \rho_s$ such that, for $i=1,\cdots,s$,
\begin{itemize}
\item $\rho_i\in \rm{Pos}_3(S_{g_i,m_i})$, and
\item simple root lengths of $m_i$ oriented boundary components are given by $\mathbf{L_i} \in \mathbb{R}_{> 0}^{2 m_i}$.
\end{itemize}

Using the same argument as \cite[Theorem 7.1]{Mir07a} for the twist flows $\{\frac{\theta_1(\gamma_i)+\theta_2(\gamma_i)}{2}\}$, by Proposition \ref{proposition:gsvf}, we have 
\begin{thm}[Mirzakhani's Integration Formula for $\mathcal{H}^t(S_{g,m})(\mathbf{L})$]
\label{theorem:mirint}
For any simple oriented multi-curve $\bm{\gamma}$ and $f^{\bm{\gamma}}:\rm{Pos}_3(S_{g,m})(\mathbf{L})\rightarrow\mathbb{R}_{\geq 0}$, 
\begin{equation*}
\begin{aligned}
&\int_{\mathcal{H}^t(S_{g,m})(\mathbf{L})} f_{\bm{\gamma}} d Vol 
= \frac{1}{2^{M(\bm{\gamma})} |Sym(\bm{\gamma})|} \cdot
\\& \int \prod_{i=1}^k \frac{\ell_1(\gamma_i)+\ell_2(\gamma_i)}{2} \cdot f^{\bm{\gamma}} \cdot \prod_{i=1}^s V_{g_i,m_i}^t(\mathbf{L_i})  \prod_{i=1}^k\left(d \ell_1(\gamma_i) d \ell_2(\gamma_i)d (\theta_2-\theta_1)(\gamma_i)\right), 
\end{aligned}
\end{equation*} 
where $M(\bm{\gamma})$ is the number of $i$ such that $\gamma_i$ separates off a $S_{1,1}$, and $Sym(\bm{\gamma}):=[\rm{Stab}(\bm{\gamma}): \cap_{i=1}^k \rm{Stab}(\gamma_i)]$.
\end{thm}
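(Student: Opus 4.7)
The plan is to adapt Mirzakhani's proof of Theorem 7.1 of \cite{Mir07a} to the $\rm{PGL}(3,\mathbb{R})$ setting, with the Wolpert Magic Formula replaced by its generalization in Proposition \ref{proposition:gsvf}. The four main steps are: unfold the mapping-class-group sum defining $f_{\bm{\gamma}}$, descend to the cover where each $\gamma_i$ is individually marked, apply the Darboux form of $dVol$ along a pants decomposition containing $\bm{\gamma}$, and collapse the residual Dehn-twist action in each $(\theta_1(\gamma_i),\theta_2(\gamma_i))$-plane.

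First, since $dVol$ is $\rm{Mod}(S_{g,m})$-invariant, an unfolding argument over the orbit $\rm{Mod}(S_{g,m})\cdot[\bm{\gamma}]$ yields
\begin{equation*}
\int_{\mathcal{H}^t(S_{g,m})(\mathbf{L})} f_{\bm{\gamma}}\, dVol
= \int_{\rm{Pos}_3^t(S_{g,m})(\mathbf{L})/\rm{Stab}(\bm{\gamma})} f^{\bm{\gamma}}\, dVol
= \frac{1}{|\rm{Sym}(\bm{\gamma})|}\int_{\rm{Pos}_3^t(S_{g,m})(\mathbf{L})/\cap_i\rm{Stab}(\gamma_i)} f^{\bm{\gamma}}\, dVol,
\end{equation*}
the second equality using $[\rm{Stab}(\bm{\gamma}):\cap_i\rm{Stab}(\gamma_i)]=|\rm{Sym}(\bm{\gamma})|$. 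Next I complete $\bm{\gamma}=\{\gamma_1,\ldots,\gamma_k\}$ to a pants decomposition $\mathcal{P}$ adapted to the cut surfaces $\{S_{g_j,m_j}\}_{j=1}^s$ and apply Proposition \ref{proposition:gsvf}, which splits $dVol$ into an ``internal'' volume form on each cut piece (from the $dX_P\wedge dY_P$ and the length-twist forms of the non-$\bm{\gamma}$ curves of $\mathcal{P}$) wedged with the gluing form $\bigwedge_{i=1}^k d\ell_1(\gamma_i)\wedge d\ell_2(\gamma_i)\wedge d(\theta_2-\theta_1)(\gamma_i)\wedge d\frac{\theta_1+\theta_2}{2}(\gamma_i)$.

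The group $\cap_i\rm{Stab}(\gamma_i)$ is an extension of the rank-$k$ Dehn-twist subgroup $\langle T_{\gamma_1},\ldots,T_{\gamma_k}\rangle\cong\mathbb{Z}^k$ by the product $\prod_j\rm{Mod}(S_{g_j,m_j})$ of internal mapping class groups, together with a $(\mathbb{Z}/2)^{M(\bm{\gamma})}$ factor from the hyperelliptic involutions of the one-holed tori cut off by single non-separating $\gamma_i$'s. Quotienting by $\prod_j\rm{Mod}(S_{g_j,m_j})$ converts the internal volume form on the $j$-th piece into $V_{g_j,m_j}^t(\mathbf{L}_j)$ (Notation \ref{notation:vol}), and the elliptic involution quotients contribute $2^{-M(\bm{\gamma})}$. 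What remains is to integrate out the Dehn twists: via Section \ref{subsection:twist}, $T_{\gamma_i}$ is conjugation by $\rho(\gamma_i)$ and shifts $(\theta_1(\gamma_i),\theta_2(\gamma_i))$ by $(\ell_1(\gamma_i),\ell_2(\gamma_i))$, equivalently shifts $(u_i,v_i):=(\frac{\theta_1+\theta_2}{2}(\gamma_i),(\theta_2-\theta_1)(\gamma_i))$ by $(\frac{\ell_1(\gamma_i)+\ell_2(\gamma_i)}{2},\ell_2(\gamma_i)-\ell_1(\gamma_i))$. Since this change of variables has Jacobian $1$, one may take the fundamental strip $u_i\in[0,\frac{\ell_1(\gamma_i)+\ell_2(\gamma_i)}{2}]$, $v_i\in\mathbb{R}$; for any Dehn-invariant integrand $H$ the map $u_i\mapsto\int_{\mathbb{R}}H(u_i,v_i)\,dv_i$ is $\frac{\ell_1+\ell_2}{2}$-periodic (the shift in $v_i$ is absorbed by translation invariance of Lebesgue measure), so Fubini yields the clean gluing factor $\frac{\ell_1(\gamma_i)+\ell_2(\gamma_i)}{2}$ together with an integral over $v_i\in\mathbb{R}$. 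Collecting all $k$ such factors gives the claimed identity.

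The main obstacle is the Dehn-twist bookkeeping: unlike Mirzakhani's setting where a single twist per curve is periodic of period $\ell$, our Dehn twist shifts \emph{both} twist variables simultaneously by different amounts. The formula still collapses cleanly because $\frac{\theta_1+\theta_2}{2}$ is precisely the Hamiltonian flow for the Hilbert length $\ell_1+\ell_2$ and is therefore naturally paired with the Dehn-twist period $\frac{\ell_1+\ell_2}{2}$, while the transverse twist-bulging variable $\theta_2-\theta_1$ survives as a free integration coordinate. Although $\theta_2-\theta_1$ is non-compact a priori, Proposition \ref{proposition:tbu} guarantees its boundedness on $\mathcal{H}^t(S_{g,m})(\mathbf{L})$, which is what ensures the right-hand side converges and allows the formula to be fed into the subsequent polynomial volume estimate.
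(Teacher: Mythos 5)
Your proposal is correct and follows essentially the same route as the paper: the paper's proof of Theorem \ref{theorem:mirint} is precisely Mirzakhani's unfolding argument from \cite[Theorem 7.1]{Mir07a} applied with the Darboux volume form of Proposition \ref{proposition:gsvf}, where the Dehn twist along $\gamma_i$ acts by the simultaneous shift $(\theta_1,\theta_2)\mapsto(\theta_1+\ell_1,\theta_2+\ell_2)$ and, in the coordinates $\bigl(\frac{\theta_1+\theta_2}{2},\theta_2-\theta_1\bigr)$, collapses to the period $\frac{\ell_1+\ell_2}{2}$ exactly as you describe, with $\theta_2-\theta_1$ surviving as a free (but $t$-bounded) integration variable. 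One cosmetic slip: the curves counted by $M(\bm{\gamma})$ separate off a one-holed torus and are therefore separating, not ``non-separating'' as written.
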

\begin{remark}
Different from the moduli space of Riemann surfaces, the volume $V_{0,3}^t(\mathbf{L})$ is not one. The space $\mathcal{H}_{0,3}(\mathbf{L})$ is parameterized by two internal parameters $X_P$ and $Y_P$. We have
\begin{equation}
\label{equation:03}
V_{0,3}^t(\mathbf{L})=\int_{\mathcal{H}^t_{0,3}(\mathbf{L})} 1\cdot  d X_P Y_P\leq  \frac{6 t}{12}\cdot 4t=2t^2.
\end{equation}
\end{remark}
\begin{proof}[Proof of Theorem \ref{thm:main}]
We prove the theorem by induction on $2g-2+m$.
Similar to \cite[Theorem 8.1]{Mir07a}, we compute $\frac{\partial}{\partial L_1} L_1 V_{g,m}^t(\mathbf{L})$ using Equation \eqref{eq:altsummand} of Theorem \ref{theorem:mcid} where $\ell_1(\alpha)=L_1$. Let 
\[\tilde{D}(\alpha,\beta,\gamma):=\sum_{(\delta,\eta)\in Mod(S)\cdot(\beta,\gamma) } D(\alpha,\delta,\eta)\]
and 
\[\tilde{R}(\alpha,\beta,\gamma):=\sum_{(\delta,\eta)\in Mod(S)\cdot(\beta,\gamma) } R(\alpha,\delta,\eta).\]
Recall $\mathcal{P}_\alpha$ is the set of the isotopy classes of pairs of pants with the boundary component $\alpha$, and $\mathcal{P}^{\partial}_\alpha$ is a subset of $\mathcal{P}_\alpha$ containing another boundary component $\gamma$ of $S_{g,m}$. Let $\mathcal{A}_\alpha$ ($\mathcal{B}_\alpha$ resp.) be the finite mapping class group orbits of $\mathcal{P}_\alpha \backslash \mathcal{P}^{\partial}_\alpha$ ($\mathcal{P}^{\partial}_\alpha$ resp.). Then Equation \eqref{eq:altsummand} can be rewritten as
\[ L_1
= \sum_{(\beta,\gamma)\in \mathcal{A}_\alpha}
\tilde{D}(\alpha,\beta,\gamma)
 + \sum_{(\beta,\gamma)\in \mathcal{B}_\alpha}
\tilde{R}(\alpha,\beta,\gamma).\]
Thus
\begin{equation}
\label{equation:intAB}
\begin{aligned}
&\frac{\partial}{\partial L_1} L_1 V_{g,m}^t(\mathbf{L})=
\\&\sum_{(\beta,\gamma)\in \mathcal{A}_\alpha}\int_{\mathcal{H}^t(S_{g,m})(\mathbf{L})} 
\frac{\partial}{\partial L_1}\tilde{D}(\alpha,\beta,\gamma)dVol
 + \sum_{(\beta,\gamma)\in \mathcal{B}_\alpha}\int_{\mathcal{H}^t(S_{g,m})(\mathbf{L})} 
\frac{\partial}{\partial L_1}\tilde{R}(\alpha,\beta,\gamma)dVol.
\end{aligned}
\end{equation}
We compute each individual integral of the right hand side of Equation \eqref{equation:intAB}. 

\textbf{Integral for any $(\beta,\gamma)\in\mathcal{B}_\alpha$.} By Theorem \ref{theorem:mirint}, we obtain 
\begin{equation}
\label{equation:computeb}
\begin{aligned}
&\int_{\mathcal{H}^t(S_{g,m})(\mathbf{L})} 
\frac{\partial}{\partial L_1}\tilde{R}(\alpha,\beta,\gamma)dVol= 2^{-m(g,m-1)} \int \frac{\ell_1(\beta)+ \ell_2(\beta)}{2} \cdot \frac{\partial}{\partial L_1}R(\alpha,\beta,\gamma) \cdot 
\\&
 V_{g,m-1}^t(\mathbf{L}_{\alpha,\gamma}^{\beta}) \cdot V_{0,3}^t(\mathbf{L}_P)  d \ell_1(\beta) d \ell_2(\beta) d (\theta_2-\theta_1)(\beta),
\end{aligned}
\end{equation}
where $\mathbf{L}_{\alpha,\gamma}^{\beta}$ is obtained from $\mathbf{L}$ by replacing the simple root lengths of $\alpha$ and $\gamma$ by that of $\beta$, and $\mathbf{L}_P:=(\ell_1(\alpha),\ell_2(\alpha),\ell_1(\beta),\ell_2(\beta),\ell_1(\gamma),\ell_2(\gamma))$.
Let
\[H(x,y):=\frac{1}{1+e^{\frac{x+y}{2}}}+\frac{1}{1+e^{\frac{x-y}{2}}}.\]
Then
\begin{equation*}
\frac{\partial}{\partial a} \mathcal{D}(a,b,c)=\frac{1}{2} H(b+c,a).
\end{equation*}
Thus
\begin{equation}
\label{equation:dR}
\frac{\partial}{\partial L_1}R(\alpha,\beta,\gamma)=\frac{1}{2}H(\phi_1'(\beta,\gamma)+\tau(\beta)+\ell_1(\beta)+\phi_1'(\beta,\gamma)-\tau(\gamma^{-1})-\ell_1(\gamma^{-1}),L_1 ).
\end{equation}

By Definition \ref{definition:bpos}, we have
\begin{equation}
\label{equation:phi1}
|\phi_1(\beta,\gamma)|\leq \frac{3}{2}t, \;\;|\phi_1'(\beta,\gamma)|\leq \frac{3}{2}t.
\end{equation}
Equations \eqref{equation:dR} \eqref{equation:phi1} imply
\[\frac{\partial}{\partial L_1}R(\alpha,\beta,\gamma)\leq \frac{1}{2} H(\ell_1(\beta)-\ell_2(\gamma)-5t,L_1 ).\]
Plugging the above equation and Equation \eqref{equation:03} into Equation \eqref{equation:computeb}, using $|(\theta_2-\theta_1)(\beta)|\leq t$, we obtain 
\begin{equation}
\label{equation:rpoly1}
\begin{aligned}
&\int_{\mathcal{H}^t(S_{g,m})(\mathbf{L})} 
\frac{\partial}{\partial L_1}\tilde{R}(\alpha,\beta,\gamma)dVol
\leq  2^{-m(g,m-1)-1}\cdot  2 t^2 \cdot 2t \int  \frac{\ell_1(\beta)+ \ell_2(\beta)}{2} \cdot  
\\& H(\ell_1(\beta)-\ell_2(\gamma)-5t,L_1 ) \cdot 
 V_{g,m-1}^t(\mathbf{L}_{\alpha,\gamma}^{\beta})  d \ell_1(\beta) d \ell_2(\beta),
\end{aligned}
\end{equation}
where $m(g,m)=1$ if $(g,m)=(1,1)$ and $m(g,m)=0$ otherwise.
Since $2g-2+m-1<2g-2-m$, by induction, $V_{g,m-1}^t(\mathbf{L}_{\alpha,\gamma}^{\beta})$ is a positive polynomial of $(t,\mathbf{L}_{\alpha,\gamma}^{\beta})$. 
For any positive integer $i,j$, we have
\begin{equation}
\label{equation:rpoly2}
\begin{aligned}
&\int  (\ell_1(\beta))^i\cdot (\ell_2(\beta))^j \cdot  H(\ell_1(\beta)-\ell_2(\gamma)-5t,L_1 )  d \ell_1(\beta) d \ell_2(\beta)
\\& \leq \int_{0}^{+\infty} (\ell_1(\beta))^i\cdot \left(\int_{0}^{t\ell_1(\beta)} (\ell_2(\beta))^j  d \ell_2(\beta)\right) \cdot  H(\ell_1(\beta)-\ell_2(\gamma)-5t,L_1 )  d \ell_1(\beta)
\\&=\frac{t^{j+1}}{j+1} \int_{0}^{+\infty} (\ell_1(\beta))^{i+j+1} \cdot  H(\ell_1(\beta)-\ell_2(\gamma)-5t,L_1 )  d \ell_1(\beta)
\\&=\frac{2^{i+j+2} t^{j+1}}{j+1} \int_{0}^{+\infty} x^{i+j+1} \cdot  H(2x-\ell_2(\gamma)-5t,L_1 )  d x
\\&=\frac{2^{i+j+2} t^{j+1}}{j+1} \cdot (i+j+1)!\cdot(-Li_{i+j+2}(-e^{\frac{1}{2}(5t+\ell_2(\gamma)+L_1)})
\\&-Li_{i+j+2}(-e^{\frac{1}{2}(5t+\ell_2(\gamma)-L_1)})).
\end{aligned}
\end{equation}
The last equality follows Equation \eqref{equation:lipoly}. By Lemma \ref{lemma:pl}, the last term of Equation \eqref{equation:rpoly2} is a positive polynomial of $(t,\mathbf{L})$. Plugging Equation \eqref{equation:rpoly2} into Equation \eqref{equation:rpoly1} for each possible $i,j$, we conclude that $\int_{\mathcal{H}^t(S_{g,m})(\mathbf{L})} 
\frac{\partial}{\partial L_1}\tilde{R}(\alpha,\beta,\gamma)dVol$ is bounded above by a positive polynomial of $(t,\mathbf{L})$.

The finite set $\mathcal{A}_\alpha$ is split into two parts $\mathcal{A}^{con}_\alpha$ and $\mathcal{A}^{decon}_\alpha$ depending on the subsurface $S_{g,m}\backslash (\beta,\gamma)$ is connected or not for any $(\beta,\gamma)\in \mathcal{A}_\alpha$.

\textbf{Integral for any $(\beta,\gamma)\in\mathcal{A}^{con}_\alpha$.} By Theorem \ref{theorem:mirint}, we obtain 
\begin{equation}
\label{equation:Acon1}
\begin{aligned}
&\int_{\mathcal{H}^t(S_{g,m})(\mathbf{L})} 
\frac{\partial}{\partial L_1}\tilde{D}(\alpha,\beta,\gamma)dVol
\\&= 2^{-m(g-1,m+1)-1} \int \frac{\ell_1(\beta)+ \ell_2(\beta)}{2}\cdot \frac{\ell_1(\gamma)+ \ell_2(\gamma)}{2} \cdot 
 \frac{\partial}{\partial L_1} D(\alpha,\beta,\gamma) \cdot 
\\& V_{g-1,m+1}^t(\mathbf{L}^{\beta,\gamma}_{\alpha}) \cdot V_{0,3}^t(\mathbf{L}_P)  d \ell_1(\beta) d \ell_2(\beta)d \ell_1(\gamma) d \ell_2(\gamma)d (\theta_2-\theta_1)(\beta)d (\theta_2-\theta_1)(\gamma),
\end{aligned}
\end{equation}
where $\mathbf{L}^{\beta,\gamma}_{\alpha}$ and $\mathbf{L}_P$ are defined as in Equation \eqref{equation:computeb}.
Firstly, we get 
\[\frac{\partial}{\partial L_1} D(\alpha,\beta,\gamma)=\frac{1}{2}H(\phi_1(\beta,\gamma)+\tau(\beta)+\ell_1(\beta)+\phi_1(\beta,\gamma)+\tau(\gamma)+\ell_1(\gamma),L_1).\]
Thus 
\[\frac{\partial}{\partial L_1} D(\alpha,\beta,\gamma)\leq \frac{1}{2}H(\ell_1(\beta)+\ell_1(\gamma)-5t,L_1).\]
Plugging the above equation and Equation \eqref{equation:03} into Equation \eqref{equation:Acon1}, we obtain
\begin{equation}
\label{equation:Acon2}
\begin{aligned}
&\int_{\mathcal{H}^t(S_{g,m})(\mathbf{L})} 
\frac{\partial}{\partial L_1}\tilde{D}(\alpha,\beta,\gamma)dVol\leq 2^{-m(g-1,m+1)-1} 2t^2 \cdot (2t)^2 \int \frac{\ell_1(\beta)+ \ell_2(\beta)}{2}\cdot
\\& \frac{\ell_1(\gamma)+ \ell_2(\gamma)}{2} \cdot 
 H(\ell_1(\beta)+\ell_1(\gamma)-5t,L_1) \cdot 
 V_{g-1,m+1}^t(\mathbf{L}^{\beta,\gamma}_{\alpha})   d \ell_1(\beta) d \ell_2(\beta)d \ell_1(\gamma) d \ell_2(\gamma).
\end{aligned}
\end{equation}
Since $2(g-1)-2+m+1<2g-2+m$, by induction, $V_{g-1,m+1}^t(\mathbf{L}^{\beta,\gamma}_{\alpha})$ is a polynomial of $(t,\mathbf{L})$.
For any positive integer $i,j,k,l$, we have
\begin{equation}
\label{equation:Acon3}
\begin{aligned}
&\int  (\ell_1(\beta))^i\cdot (\ell_2(\beta))^j \cdot (\ell_1(\gamma))^k\cdot (\ell_2(\gamma))^l \cdot  H(\ell_1(\beta)+\ell_1(\gamma)-5t,L_1) 
\\& d \ell_1(\beta) d \ell_2(\beta) d \ell_1(\gamma) d \ell_2(\gamma)
\\& \leq \int_{0}^{+\infty} \int_{0}^{+\infty} (\ell_1(\beta))^i\cdot \left(\int_{0}^{t\ell_1(\beta)} (\ell_2(\beta))^j  d \ell_2(\beta)\right) \cdot (\ell_1(\gamma))^k\cdot 
\\&\left(\int_{0}^{t\ell_1(\gamma)} (\ell_2(\gamma))^l  d \ell_2(\gamma)\right) \cdot  H(\ell_1(\beta)+\ell_1(\gamma)-5t,L_1)  d \ell_1(\beta)   d \ell_1(\gamma) 
\\&=\frac{t^{j+l+2}}{(j+1)(l+1)} \int_{0}^{+\infty}\int_{0}^{+\infty} x^{i+j+1} y^{k+l+1} \cdot  H(x+y-5t,L_1)  d x d y
\\&=\frac{ t^{j+l+2} (i+j+1)! (k+l+1)!}{(j+1)(l+1)(i+j+k+l+3)!} \int_{0}^{+\infty} x^{i+j+k+l+3} \cdot  H(x-5t,L_1)  d x
\\&=\frac{ 2^{i+j+k+l+4} (i+j+1)! (k+l+1)!t^{j+l+2}}{(j+1)(l+1)} (-Li_{i+j+k+l+4}(-e^{\frac{1}{2}(5t-L_1)})
\\&-Li_{i+j+k+l+4}(-e^{\frac{1}{2}(5t+L_1)})).
\end{aligned}
\end{equation}
The second last line follows \cite[page 208]{Mir07a}. By Lemma \ref{lemma:pl}, the last line of Equation \eqref{equation:Acon3} is a positive polynomial of $(t,\mathbf{L})$. Plugging Equation \eqref{equation:Acon3} into Equation \eqref{equation:Acon2} for each possible $i,j,k,l$, we get $\int_{\mathcal{H}^t(S_{g,m})(\mathbf{L})} 
\frac{\partial}{\partial L_1}\tilde{D}(\alpha,\beta,\gamma)dVol$ is bounded above by a positive polynomial of $(t,\mathbf{L})$.

\textbf{Integral for any $(\beta,\gamma)\in\mathcal{A}^{decon}_\alpha$.} The surface $S_{g,m}\backslash (\beta,\gamma)$ is two connected surface $S_{g_1,m_1+1}$ and $S_{g_2,m_2+1}$ where $g_1+g_2=g$ and $m_1+m_2=m-1$. Here $\mathbf{L}=(\ell_1(\alpha),\ell_2(\alpha), \mathbf{L}_1,\mathbf{L}_2)$. Except for $\beta$ ($\gamma$ resp.), the surface $S_{g_1,m_1+1}$ ($S_{g_2,m_2}$ resp.) has simple root boundary lengths $\mathbf{L}_1$ ($\mathbf{L}_2$ resp.). By Theorem \ref{theorem:mirint}, we obtain 
\begin{equation}
\begin{aligned}
&\int_{\mathcal{H}^t(S_{g,m})(\mathbf{L})} 
\frac{\partial}{\partial L_1}\tilde{D}(\alpha,\beta,\gamma)dVol= 2^{-m(g_1,m_1+1)-m(g_2,m_2+1)-1} \int \frac{\ell_1(\beta)+ \ell_2(\beta)}{2} \cdot
\\& \frac{\ell_1(\gamma)+ \ell_2(\gamma)}{2}  
 \cdot \frac{\partial}{\partial L_1} D(\alpha,\beta,\gamma) \cdot 
 V_{g_1,m_1+1}^t(\mathbf{L}_1^{\beta}) \cdot V_{g_2,m_2+1}^t(\mathbf{L}_2^{\gamma}) \cdot V_{0,3}^t(\mathbf{L}_P)   
 \\& d \ell_1(\beta) d \ell_2(\beta)d \ell_1(\gamma) d \ell_2(\gamma) d(\theta_2-\theta_1)(\beta) d(\theta_2-\theta_1)(\gamma).
\end{aligned}
\end{equation}
By similar argument as for $(\beta,\gamma)\in\mathcal{A}^{con}_\alpha$, we obtain $\int_{\mathcal{H}^t(S_{g,m})(\mathbf{L})} 
\frac{\partial}{\partial L_1}\tilde{D}(\alpha,\beta,\gamma)dVol$ is bounded above by a positive polynomial of $(t,\mathbf{L})$. 

Finally, we conclude that $V_{g,m}^t(\mathbf{L})$ is bounded above by a positive polynomial of $(t,\mathbf{L})$.
\end{proof}

\begin{remark}
Following the above proof, the degree of the positive polynomial of $(t,\mathbf{L})$ is bounded above by $28g-28+14m$, since the increased degree is $8$ for any $(\beta,\gamma)\in\mathcal{B}_\alpha$ and the increased degree is $14$ for any $(\beta,\gamma)\in\mathcal{A}_\alpha$ by our algorithm.
\end{remark}
By the convergence of the sequence $\sum_{k=1}^{+\infty} \frac{R(k)}{e^k}$ for any polynomial $R$, we have the following corollary.
\begin{cor}
We have $\int_{\mathcal{H}(S_{g,m})(\mathbf{L})} 
e^{-t} dVol$ is finite where (Definition \ref{definition:bpos})
\[t=\max\{mT(\rho),mD(\rho),mL(\rho),mB(\rho)\}.\] 
\end{cor}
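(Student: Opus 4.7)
\bigskip

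\noindent\textbf{Proof proposal.} The plan is to convert the finiteness of the integral into a convergent numerical series whose terms are polynomial-times-exponential, using Theorem~\ref{thm:main} as the volume bound on each level set of $t$. Concretely, I would first note that the function
\[
t:\mathcal{H}(S_{g,m})(\mathbf{L})\longrightarrow \mathbb{R}_{\geq 0},\qquad t(\rho):=\max\{mT(\rho),mD(\rho),mL(\rho),mB(\rho)\},
\]
is the minimal parameter for which $\rho\in\mathcal{H}^{t(\rho)}(S_{g,m})(\mathbf{L})$, and so the sublevel sets $\{t\leq s\}$ coincide with $\mathcal{H}^{s}(S_{g,m})(\mathbf{L})$ by Definition~\ref{definition:bpos}. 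In particular $t$ is Borel measurable, and the exhaustion $\bigcup_{s>0}\mathcal{H}^{s}(S_{g,m})(\mathbf{L})=\mathcal{H}(S_{g,m})(\mathbf{L})$ justifies stratifying by integer levels.

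Next I would decompose the integration domain into the annular shells
\[
A_k:=\mathcal{H}^{k}(S_{g,m})(\mathbf{L})\setminus \mathcal{H}^{k-1}(S_{g,m})(\mathbf{L}),\qquad k\geq 1,
\]
with the convention $\mathcal{H}^{0}(S_{g,m})(\mathbf{L})=\emptyset$. On $A_k$ one has $t(\rho)\geq k-1$, hence $e^{-t(\rho)}\leq e^{-(k-1)}$, so
\[
\int_{\mathcal{H}(S_{g,m})(\mathbf{L})}e^{-t}\,dVol=\sum_{k=1}^{\infty}\int_{A_k}e^{-t}\,dVol\;\leq\;\sum_{k=1}^{\infty}e^{-(k-1)}\,\mathrm{Vol}(A_k)\;\leq\;\sum_{k=1}^{\infty}e^{-(k-1)}\,V_{g,m}^{k}(\mathbf{L}).
\]
By Theorem~\ref{thm:main} there is a polynomial $P(\,\cdot\,,\mathbf{L})$ with $V_{g,m}^{k}(\mathbf{L})\leq P(k,\mathbf{L})$ for every integer $k\geq 1$. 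Thus the integral is dominated by $\sum_{k\geq 1}e^{-(k-1)}P(k,\mathbf{L})$, which converges since exponentials beat polynomials (this is precisely the convergence statement quoted just before the corollary).

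I do not expect any serious obstacle beyond these two ingredients; the only sanity check to perform is that each $\mathcal{H}^{k}(S_{g,m})(\mathbf{L})$ really is Borel (clear since the defining invariants $mT,mD,mL,mB$ are suprema of continuous functions over countable mapping class group orbits) and that the monotone exhaustion genuinely covers $\mathcal{H}(S_{g,m})(\mathbf{L})$ (this is recorded in Definition~\ref{definition:bpos} via the mapping class group invariant exhaustion). Once these are in hand, the dominated-convergence argument above closes the proof.
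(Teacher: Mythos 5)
Your proposal is correct and is essentially the same argument the paper gives (the paper compresses it into the single phrase ``by the convergence of $\sum_{k\geq 1} R(k)/e^{k}$ for any polynomial $R$''): stratify $\mathcal{H}(S_{g,m})(\mathbf{L})$ by the integer sublevel sets $\mathcal{H}^{k}(S_{g,m})(\mathbf{L})$, bound the contribution of each annular shell by $e^{-(k-1)}V_{g,m}^{k}(\mathbf{L})$, invoke Theorem~\ref{thm:main} for the polynomial bound on $V_{g,m}^{k}(\mathbf{L})$, and conclude by the convergence of the resulting series. Your added remarks on measurability of $t$ and on $\{t\leq s\}=\mathcal{H}^{s}(S_{g,m})(\mathbf{L})$ are correct and make explicit the steps the paper leaves implicit.
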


By Proposition \ref{proposition:bencom}, we get the following.
\begin{cor}
Recall Definition \ref{definition:mainob} and Example \ref{example:exhaustion}.
The Goldman symplectic volume of 
\[\mathcal{AH}^t(S_{g,m})(\mathbf{L}),\;\;A^t(S_{g,m})(\mathbf{L}), \;\; B^t(S_{g,m})(\mathbf{L}),\;\;C^t(S_{g,m})(\mathbf{L}), \;\;\]
\[ F^t(S_{g,m})(\mathbf{L}),\;\;G^t(S_{g,m})(\mathbf{L})\] are finite.
\end{cor}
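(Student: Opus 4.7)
The plan is to deduce the finiteness of the Goldman symplectic volume of each listed subset directly from the Main Theorem, using the coarse comparisons among subsets already established earlier in Section~3. Concretely, for each subset $U^{t}(S_{g,m})(\mathbf{L})$ in the list I will produce a containment
\begin{equation*}
U^{t}(S_{g,m})(\mathbf{L})\subset \mathcal{H}^{C(t)}(S_{g,m})(\mathbf{L})
\end{equation*}
for some finite $C(t)>0$; once this is in hand, monotonicity of the volume measure together with Theorem~\ref{thm:main} immediately supplies a finite upper bound.

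First I would appeal to Proposition~\ref{proposition:bencom}, which asserts that on the assumption $\mathbf{L}\in\mathbb{R}_{>0}^{2m}$ the seven subsets
\[
\mathcal{AH}^{t},\ A^{t},\ B^{t},\ C^{t},\ F^{t},\ G^{t},\ \mathcal{H}^{t}
\]
of $\mathcal{H}(S_{g,m})(\mathbf{L})$ are comparable to each other in the sense of Section~\ref{section:boundedm}. Unwinding the definition of comparability for each of $\mathcal{AH}^{t}$, $A^{t}$, $B^{t}$, $C^{t}$, $F^{t}$, $G^{t}$, one obtains a function $C(t)$ (depending on which subset is considered) such that the desired containment above holds. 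This is exactly the content I need; no new geometry enters at this step, only a reorganization of the comparability statements already proved.

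Next I would invoke Theorem~\ref{thm:main}: for $\mathbf{L}\in\mathbb{R}_{>0}^{2m}$ and any $s>0$, the Goldman symplectic volume $V_{g,m}^{s}(\mathbf{L})$ of $\mathcal{H}^{s}(S_{g,m})(\mathbf{L})$ is bounded above by a positive polynomial of $(s,\mathbf{L})$, and in particular is finite. Combining the inclusion with monotonicity of the volume yields
\begin{equation*}
\mathrm{Vol}\bigl(U^{t}(S_{g,m})(\mathbf{L})\bigr)\ \leq\ V^{C(t)}_{g,m}(\mathbf{L})\ <\ +\infty
\end{equation*}
for each $U\in\{\mathcal{AH},A,B,C,F,G\}$, which is exactly the statement of the corollary.

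The only small point to verify is that each of the listed subsets is a Borel subset of $\mathcal{H}(S_{g,m})(\mathbf{L})$ with respect to the Goldman symplectic volume form described in Proposition~\ref{proposition:gsvf}; this is immediate since each is cut out by continuous inequalities on projective invariants (triple ratios, edge functions, area, hyperbolicity constant, and so on). There is no serious obstacle here: all the substantial work has already been invested in Theorem~\ref{thm:main} and in the coarse comparisons of Proposition~\ref{proposition:bencom}. The corollary is a purely formal consequence, transporting the polynomial upper bound through a chain of inclusions.
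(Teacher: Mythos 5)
Your proposal is correct and follows the exact route of the paper: cite Proposition~\ref{proposition:bencom} to obtain an inclusion $U^{t}\subset\mathcal{H}^{C(t)}$ for each listed subset $U^{t}$, then conclude finiteness by monotonicity of volume together with the polynomial bound of Theorem~\ref{thm:main}. The added remark on Borel measurability is a sensible small detail that the paper leaves implicit.
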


\section{Geometry of $\mathcal{AH}^{t}(S)$}
Each element in the moduli space $\mathcal{AH}^{t}(S_{g,m})(\mathbf{L})$ has the canonical area bounded above by $t$. Comparing with the Fuchsian locus which has the fixed canonical area, the condition of bounded area enlightens us to show that the moduli space $\mathcal{AH}^{t}(S_{g,m})(\mathbf{L})$ is a small neighborhood of the Fuchsian locus where many similar properties for the moduli space of Riemann surfaces still hold. 

\begin{prop}[Bers' Constant]
\label{proposition:bers}
Let $S_{g,m}$ be the surface with negative Euler characteristic. Recall the Hilbert length $\ell(\gamma):=\ell_1(\gamma)+\ell_2(\gamma)$. There is a constant $B(t)$ such that for any $\rho \in \mathcal{AH}^{t}(S_{g,m})(\mathbf{L})$ where $\mathbf{L}\in \mathbb{R}_{>0}^{2m}$, there is a pants decomposition $\mathcal{P}=\{\delta_1,\cdots, \delta_{3g-3+m}\}$ of $S_{g,m}$ with $\ell(\delta_i)\leq B(t)$ for any $i=1,\cdots,3g-3+m$.
\end{prop}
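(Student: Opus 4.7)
The plan is to adapt the classical Bers argument from hyperbolic geometry to the convex $\mathbb{RP}^2$ setting, using the uniform comparability between the Hilbert metric on $\Omega_\rho$ and the Blaschke Riemannian metric (Remark \ref{remark:blaschke} and \cite[Proposition 3.4]{BH13}). Since the Blaschke metric has sectional curvature pinched between two universal negative constants, and its area differs from the canonical area by at most a universal multiplicative factor, standard Riemannian volume comparison tools are available on the piece of $(S_{g,m},\rho)$ bounded away from the boundary.

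First, I would establish a uniform upper bound on the Hilbert injectivity radius of $(S_{g,m},\rho)$. A Bishop--Gromov type comparison for the Blaschke metric shows that, at any interior point $p$ where the Blaschke injectivity radius is at least $r$, the Blaschke area of $B(p,r)$ is bounded below by a strictly increasing function $f(r)>0$ depending only on the universal curvature bounds. Since the total Blaschke area of $(S_{g,m},\rho)$ is at most $C_0 t$ for a universal constant $C_0$, we obtain an injectivity radius bound $\operatorname{inj}_{\operatorname{Blaschke}}(p)\leq f^{-1}(C_0 t)$, and after reinterpretation via Blaschke--Hilbert comparability, $\operatorname{inj}_{\operatorname{Hilb}}(p)\leq C_1 f^{-1}(C_0 t)$. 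Choosing $p$ at definite Hilbert distance from $\partial S_{g,m}$ (possible because the loxodromic boundary data $\mathbf{L}\in\mathbb{R}_{>0}^{2m}$ produces a collar of depth depending only on $\mathbf{L}$), we extract a non-peripheral geodesic loop at $p$ of Hilbert length at most $2 C_1 f^{-1}(C_0 t)$, whose shortest representative in its free homotopy class is a non-peripheral simple closed Hilbert geodesic $\gamma$ of Hilbert length at most $B_1(t):=2 C_1 f^{-1}(C_0 t)$.

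Next, I would iterate. Cutting $(S_{g,m},\rho)$ along $\gamma$ produces one or two convex $\mathbb{RP}^2$ subsurfaces of strictly smaller topological complexity, whose canonical areas are still bounded by $t$ (by additivity) and whose newly created boundary components are loxodromic with Hilbert length equal to $\ell(\gamma)\leq B_1(t)$. Applying the same procedure recursively to each complementary piece, at most $3g-3+m$ iterations produce a pants decomposition $\mathcal{P}=\{\delta_1,\ldots,\delta_{3g-3+m}\}$ with $\ell(\delta_i)\leq B(t)$, where $B(t)$ is obtained by propagating the bound $B_1(t)$ through the finitely many inductive steps.

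The main obstacle will be the uniformity of the inductive step. One must verify that after cutting along $\gamma$, each complementary piece inherits a genuine convex $\mathbb{RP}^2$ structure with loxodromic boundary, so that its canonical convex domain is well defined via the doubling construction of Labourie--McShane \cite[Section 9]{LM09} and its canonical area is again bounded by $t$; and that the collar estimate used to prevent $\gamma$ from being peripheral remains uniform as the induction progresses, even though the boundary simple root lengths of the new pieces are only controlled by $B_1(t)$ rather than by the original $\mathbf{L}$. Loxodromicity of the monodromy on non-peripheral simple curves is immediate from Theorem \ref{theorem:loxo}, and a quantitative collar lemma for Hilbert geometry (in the spirit of \cite[Proposition 3.2]{Ben03} and Proposition \ref{proposition:l1l2b}) should yield the required uniform collar depth in terms of $t$ alone; making these estimates explicit and independent of the step in the iteration is the delicate point.
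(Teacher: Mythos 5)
Your overall strategy---pass from the Hilbert metric to the Riemannian Blaschke metric via the uniform comparability of Benoist--Hulin, use a Riemannian fact to bound injectivity radius by area, and proceed inductively in the style of Bers' theorem---is the same as the paper's. The paper actually follows Farb--Margalit \cite[Theorem 12.8]{FM11} essentially verbatim and cites Berger \cite{Ber76} for the area-controls-injectivity-radius step; Berger's theorem holds for an arbitrary Riemannian metric, so no curvature hypothesis is required (your appeal to Bishop--Gromov, which needs a lower curvature bound, is not wrong---the Blaschke metric of an affine sphere does satisfy $K\geq -1$---but your claim of pinching ``between two universal negative constants'' is imprecise, since the upper bound is $0$, and in any case the curvature input is unnecessary).

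The genuine gap is in your inductive step. You propose to cut along $\gamma$ and then redouble each complementary piece to form its own canonical convex domain $\Omega_{\rho|_{S'}}$ via the Labourie--McShane construction, asserting that the canonical areas of the pieces are ``still bounded by $t$ (by additivity).'' No such additivity holds. The canonical area of $S'$ is computed with respect to the Busemann density of $\Omega_{\rho|_{S'}}$, which is a \emph{different} convex domain from $\Omega_\rho$ (it is the strictly convex domain for the double of the \emph{subsurface}, not a sub-piece of $\Omega_\rho$), and there is no monotonicity relating areas in these two domains. The paper sidesteps this entirely by never re-doubling: the Hilbert metric $h_\rho$ on the fixed canonical domain $\Omega_\rho$ is kept throughout the induction, the complementary subsurfaces simply inherit the restriction of that one metric, and the area of any subsurface in that fixed metric is then trivially $\leq t$. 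This also dissolves the uniformity concern you raise at the end (boundary lengths of pieces being controlled only by $B_1(t)$), which is an artifact of the re-doubling. If you replace your redoubling step with the restriction of $h_\rho$ to the complementary pieces and apply Berger at each stage, your argument collapses into the paper's.
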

\begin{proof}
For any $\rho \in \mathcal{AH}^{t}(S_{g,m})(\mathbf{L})$ where $\mathbf{L}\in \mathbb{R}_{>0}^{2m}$, let $h_\rho$ be its Hilbert metric on the canonical domain $\Omega_\rho$. Then $\ell(\gamma)$ is the translation distance of $\gamma$ with respect to $h_\rho$. We use the same argument as in \cite[Theorem 12.8]{FM11} by induction on the number of distinct disjoint simple essential closed curves on $S_{g,m}$. Except streaming line by line, the main issue left is that the injective radius of a unit ball is bounded above by a proper function of the area of the surface. The above statement is true for a Riemannian metric by \cite{Ber76}. By \cite[Proposition 3.4]{BH13}, the blaschke metric $b^\rho$ which is Riemannian is uniformly comparable to the Hilbert metric $h_\rho$. Thus for the Hilbert metric $h_\rho$, the injective radius of a unit ball is bounded above by a proper function of the area of the surface. 
\end{proof}

The Mumford's compactness theorem \cite{Mum71} allows us to cut the moduli space of Riemann surface into thick and thin parts. We prove a similar theorem for $\mathcal{AH}^{t}(S_{g,m})(\mathbf{L})$.
\begin{defn}
Given $\epsilon>0$, the {\em thick part} $\mathcal{AH}^{t}(S_{g,m})(\mathbf{L})_\epsilon$ of $\mathcal{AH}^{t}(S_{g,m})(\mathbf{L})$ with $\mathbf{L}\in \mathbb{R}_{>0}^{2m}$ is these $\rho$ satisfies
\[\ell_1^\rho(\gamma)\geq \epsilon,\;\;\ell_2^\rho(\gamma)\geq \epsilon\]
for any essential oriented closed curve $\gamma$. 
\end{defn}

\begin{thm}
\label{theorem:mum}
The thick part $\mathcal{AH}^{t}(S_{g,m})(\mathbf{L})_\epsilon$ with $\mathbf{L}\in \mathbb{R}_{>0}^{2m}$ is compact.
\end{thm}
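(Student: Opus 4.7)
The plan is to show that every sequence $\{[\rho_n]\} \subset \mathcal{AH}^{t}(S_{g,m})(\mathbf{L})_\epsilon$ admits a convergent subsequence, by fixing a pants decomposition via the Bers-type lemma, parameterizing the representations using the generalized Darboux coordinates of Theorem \ref{theorem:Darboux}, and showing that every coordinate ranges over a compact interval modulo the action of $\rm{Mod}(S_{g,m})$.

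I would first apply Proposition \ref{proposition:bers} to choose, for each lift $\rho_n$, a pants decomposition $\mathcal{P}_n = \{\delta_1^n, \ldots, \delta_{3g-3+m}^n\}$ with all Hilbert lengths bounded above by $B(t)$. Since $S_{g,m}$ carries only finitely many topological types of pants decomposition up to the action of $\rm{Mod}(S_{g,m})$, after passing to a subsequence and acting on each $\rho_n$ by a mapping class I may assume $\mathcal{P}_n = \mathcal{P}_0$ is a single fixed decomposition. Fixing an ideal triangulation $\mathcal{T}$ subordinate to $\mathcal{P}_0$ with a system of transverse arcs, Theorem \ref{theorem:Darboux} produces the Darboux coordinates $\{X_P, Y_P\}_{P \in \mathbb{P}}$ on the internal pants together with the length/twist coordinates $\{\ell_i(\delta_j), \theta_i(\delta_j)\}_{1 \leq j \leq 3g-3+m,\, i=1,2}$ on the pants curves.

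Next I would bound each coordinate. Proposition \ref{proposition:bers} and the $\epsilon$-thickness hypothesis give $\ell_i(\delta_j) \in [\epsilon, B(t)]$. By Corollary \ref{corollary:AHH} one has $\mathcal{AH}^{t}(S_{g,m})(\mathbf{L}) \subset \mathcal{H}^{C(t)}(S_{g,m})(\mathbf{L})$, so Propositions \ref{proposition:trib} and \ref{proposition:bulb}, combined with the explicit expressions for $X_P$ and $Y_P$ in Proposition \ref{proposition:gsvf} as linear combinations of triangle and bulging invariants of the two ideal triangles in each pair of pants, provide uniform bounds $|X_P|, |Y_P| \leq C'(t)$. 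The Dehn twist along $\delta_j$ belongs to the stabilizer of $\mathcal{P}_0$ in $\rm{Mod}(S_{g,m})$ and shifts $(\theta_1(\delta_j), \theta_2(\delta_j))$ by $(\ell_1(\delta_j), \ell_2(\delta_j))$, so after applying a suitable Dehn-twist power I may arrange $\theta_1(\delta_j) \in [0, \ell_1(\delta_j)) \subset [0, B(t))$. The remaining free direction, transverse to the Dehn-twist lattice, is parallel to the twist-bulging flow, and Proposition \ref{proposition:tbu} forces the diameter of each twist-bulging orbit's intersection with $\mathcal{AH}^t$ to be at most $B(t)$; combined with the bound on $\theta_1(\delta_j)$ this pins down $\theta_2(\delta_j)$ up to a constant depending only on $t$.

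With every Darboux coordinate ranging over a compact interval, a diagonal extraction yields a convergent subsequence with limit $\rho_\infty \in \rm{Pos}_3(S_{g,m})(\mathbf{L})$; continuity of the canonical area (under Hausdorff variation of the canonical domain $\Omega_\rho$) and of the simple-root length functions places $[\rho_\infty]$ in $\mathcal{AH}^{t}(S_{g,m})(\mathbf{L})_\epsilon$, completing the compactness argument. I expect the main obstacle to lie precisely in the last twist bound: Proposition \ref{proposition:tbu} controls only the parameter range of each twist-bulging orbit inside $\mathcal{AH}^t$, so translating this into an absolute bound on the twist-bulging coordinate in the Dehn-twist quotient cylinder $\mathbb{R}^2 / \mathbb{Z}(\ell_1(\delta_j), \ell_2(\delta_j))$ will require leveraging the $\epsilon$-thickness to keep the lattice vector $(\ell_1, \ell_2)$ bounded away from degeneration, ensuring that the orbit-diameter bound translates quantitatively to the non-compact factor of the quotient.
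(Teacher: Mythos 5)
Your proposal follows the same route as the paper's proof: normalize by a mapping class to fix a Bers pants decomposition, bound the internal parameters $X_P,Y_P$ and the simple root lengths, use Dehn twists to pin down one twist coordinate, and rule out escape in the twist-bulging coordinate via the cylinder/area estimate. The subtlety you flag about promoting Proposition~\ref{proposition:tbu} (an orbit-diameter bound) to an absolute bound on the twist-bulging coordinate is genuine and is present in the paper's argument as well, which bridges it by invoking \cite[Theorem~3.7]{FK16} directly; you are also right to cite Proposition~\ref{proposition:bulb} in addition to Proposition~\ref{proposition:trib} when bounding $X_P$, since $X_P$ is a combination of edge functions rather than triple ratios, a small precision the paper's one-line justification glosses over.
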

\begin{proof}
We adjust the proof in \cite[Theorem 12.6]{FM11} to our situation. 
Recall the coordinate system subordinate to a pants decomposition $\mathcal{P}$ and transverse arcs in Proposition \ref{proposition:gsvf}: 
\begin{itemize}
\item for each pants curve, choose $\ell_1(\gamma),\ell_1(\gamma), (\frac{\theta_1+\theta_2}{2})(\gamma),(\theta_2-\theta_1)(\gamma)$;
\item for each pair of pants $P$, choose $X_P$, $Y_P$.
\end{itemize}
For any sequence $\{\rho_i\}$ in $\mathcal{AH}^{t}(S_{g,m})(\mathbf{L})_\epsilon$, let us choose the lifts $\{\tilde{\rho}_i\}$ in $\rm{Pos}(S_{g,m})(\mathbf{L})$. By Proposition \ref{proposition:trib}, the parameters $X_P$, $Y_P$ for the sequence are all bounded within a compact interval.
By Proposition \ref{proposition:bers}, for each $\tilde{\rho}_i$, there is a pants decomposition $\mathcal{P}_i$ such that $\ell_j^{\tilde{\rho}_i}(\gamma)\in [\epsilon,B(t)]$ for $j=1,2$ and any $\gamma \in \mathcal{P}_i$. Since the mapping class group orbits of all the pants decompositions of $S_{g,m}$ are finite, we can choose a subsequence $\{\tilde{\rho}_{j_i}\}$ of $\{\tilde{\rho}_i\}$ and a sequence of mapping class group elements $\{g_i\}$ such that $g_i(\mathcal{P}_{j_i})=\mathcal{P}$. Then in the above coordinate system subordinate to $\mathcal{P}$, for any $\psi_i=g_i\cdot \tilde{\rho}_{j_i}$, we have $\ell_j^{\psi_i}(\gamma)\in [\epsilon,B(t)]$ for $j=1,2$ and any $\gamma \in \mathcal{P}$. The Dehn twists along the pants curves allow us to find a sequence $\{f_i\}$ in the mapping class group such that the sequence $\{q_i=f_i\psi_i\}$ satisfies that for any $\gamma \in \mathcal{P}$, $(\frac{\theta_1^{q_i}+\theta_2^{q_i}}{2})(\gamma)$ is bounded within a compact interval. Suppose that there is a subsequence of $\{q_i\}$, still denoted by $\{q_i\}$, such that $(\theta_2^{q_i}-\theta_1^{q_i})(\gamma)$ converges to infinity for certain $\gamma\in \mathcal{P}$. Then by \cite[Theorem 3.7]{FK16} (or Proposition \ref{proposition:tbu}), the canonical area for $q_i$ converges to infinity. Contradiction. Hence for any $\gamma\in \mathcal{P}$, $\{(\theta_2^{q_i}-\theta_1^{q_i})(\gamma)\}$ also lie in a compact interval. Hence there is a subsequence of the sequence $\{\rho_i\}$ contained in a compact set.
\end{proof}

\section*{Acknowledgements}
I thank Fran\c cois Labourie for suggesting the beautiful work of Maryam Mirzakhani to me in 2010. I thank my collaborators Yi Huang, Anna Wienhard and Tengren Zhang since two crucial tools in this paper are established in our previous collaborations. I thank Yves Benoist for suggesting the reference \cite{Ben03} to me. I thank Scott Wolpert for very helpful comments. I would like to express my gratitude to IHES, National University of Singapore, University of Luxembourg for their hospitality.



\end{document}